\newtheoremstyle{theorem}
{10pt} 
{10pt} 
{\sl} 
{\parindent} 
{\bf} 
{. } 
{ } 
{} 
\theoremstyle{theorem}
\newtheoremstyle{defi}
{10pt} 
{10pt} 
{\rm} 
{\parindent} 
{\bf} 
{. } 
{ } 
{} 
\theoremstyle{defi}
\newtheorem{df}{Definition}[section]
\newtheorem{thm}[df]{Theorem}
\newtheorem{lem}[df]{Lemma}
\newcommand{\F}{\noindent}
\newcommand{\SP}{\smallskip}
\newcommand{\MP}{\medskip}
\newcommand{\BP}{\bigskip}
\newcommand{\beq}{\begin{eqnarray}}
\newcommand{\ene}{\end{eqnarray}}
\newcommand{\beqs}{\begin{eqnarray*}}
\newcommand{\enes}{\end{eqnarray*}}
\newcommand{\eq}[1]{(\ref{#1})}
\newcommand{\nom}{\nonumber}
\newcommand{\R}{{{\mathbb R}}}
\newcommand{\N}{{\mathbb N}}
\newcommand{\Q}{{\mathbb Q}}
\newcommand{\eee}{\mbox{\boldmath $\bar e$}}
\newcommand{\aaa}{\mbox{\boldmath $\bar a$}}
\newcommand{\bbb}{\mbox{\boldmath $\bar b$}}
\newcommand{\kkk}{\mbox{\boldmath $\bar k$}}
\newcommand{\nnn}{\mbox{\boldmath $\bar n$}}
\newcommand{\ppp}{\mbox{\boldmath $\bar p$}}
\newcommand{\qqqq}{\mbox{\boldmath $\bar q$}}
\newcommand{\ooo}{\mbox{\boldmath $\bar 0$}}
\newcommand{\llll}{\mbox{\boldmath $\bar 1$}}
\newcommand{\tttt}{\mbox{\boldmath $\bar 2$}}
\newcommand{\pmat}{\begin{pmatrix}}
\newcommand{\emat}{\end{pmatrix}}
\newcommand{\DD}{{\cal{D}}}
\newcommand{\EEE}{{\bf E}}
\newcommand{\GGG}{{\bf G}}
\newcommand{\HHH}{{\bf H}}
\newcommand{\RRR}{{\bf R}}
\newcommand{\SSSS}{{\bf S}}
\newcommand{\TTTT}{{\bf T}}
\newfont{\blg}{cmr10 scaled \magstep3}
\newfont{\bg}{cmr10 scaled \magstep4}
\newfont{\bgg}{cmr10 scaled \magstep5}
\newcommand{\bigzerou}{\smash{\lower1.7ex\hbox{\bg 0}}}
\newcommand{\bigzerouu}{\smash{\lower1.7ex\hbox{\bgg 0}}}
\newcommand{\bigzerow}{\smash{\lower1.0ex\hbox{\blg 0}}}
\newcommand{\bigzeroww}{\smash{\lower0.4ex\hbox{\blg 0}}}
\newcommand{\sharpl}{\smash{\lower1.7ex\hbox{$\sharp$}}}
\newcommand{\sharplp}{\smash{\lower1.7ex\hbox{$\sharp'$}}}
\newcommand{\xxxa}{\mbox{$\lceil x_1\rceil$}}
\newcommand{\xxxn}{\mbox{$\lceil x_n\rceil$}}
\renewcommand{\eee}{\mbox{$\lceil e\rceil$}}
\renewcommand{\aaa}{\mbox{$\lceil a\rceil$}}
\renewcommand{\bbb}{\mbox{$\lceil b\rceil$}}
\renewcommand{\kkk}{\mbox{$\lceil k\rceil$}}
\renewcommand{\nnn}{\mbox{$\lceil n\rceil$}}
\renewcommand{\ppp}{\mbox{$\lceil p\rceil$}}
\renewcommand{\qqqq}{\mbox{$\lceil q\rceil$}}
\renewcommand{\ooo}{\mbox{$\lceil 0\rceil$}}
\renewcommand{\llll}{\mbox{$\lceil 1\rceil$}}
\renewcommand{\tttt}{\mbox{$\lceil 2\rceil$}}
\newcommand{\qqqqo}{\mbox{$\lceil q^{(0)}\rceil$}}
\newcommand{\qqqqa}{\mbox{$\lceil q^{(1)}\rceil$}}
\newcommand{\qqqqn}{\mbox{$\lceil q^{(n)}\rceil$}}
\newcommand{\qqqqomega}{\mbox{$\lceil q^{(\omega)}\rceil$}}
\newcommand{\qqqqomegaa}{\mbox{$\lceil q^{(\omega_1)}\rceil$}}
\begin{document}

\title{An implication of G\"odel's incompleteness theorem}
\author{Hitoshi Kitada\\
Graduate School of Mathematical Sciences\\
University of Tokyo\\
Komaba, Meguro, Tokyo 153-8914, Japan\\
kitada@ms.u-tokyo.ac.jp}
\date{March 22, 2009}
\maketitle

\begin{abstract}
A proof of G\"odel's incompleteness theorem is given. With this new proof a transfinite extension of G\"odel's theorem is considered. It is shown that if one assumes the set theory ZFC on the meta level as well as on the object level, a contradiction arises. The cause is shown to be the implicit identification of the meta level and the object level hidden behind the G\"odel numbering. An implication of these considerations is stated.

{\bf AMS Subject Classification:} 03F40, 03F15, 03B25, 03E99

{\bf Key Words and Phrases:} Consistency, Incompleteness, G\"odel, Transfinite extension
\end{abstract}

\section{Introduction}\label{chap:1}

G\"odel's incompleteness theorem is well-known. It says ``If a formal theory $S$ including number theory is consistent, there is a proposition $G$ both of whose affirmation $G$ and negation $\neg G$ are not provable in $S$." This is the G\"odel's first theorem. The second theorem says ``If a formal theory $S$ including number theory is consistent, the consistency of $S$ is not provable by the method which is formalizable in the theory $S$ itself."

The incompleteness in this context is the incompleteness in syntactic sense, i.e. it is regarded to be proved solely by the syntactic treatment of the words of the formal system $S$, independently of the meaning of the propositions. There is on the other hand the completeness in semantic sense. According to this definition, a completeness in a restricted sense is known to hold for a subsystem of $S$.

The present paper is written in response to the invitation for publication in International Journal of Pure and Applied Mathematics from the Editor-in-Chief Professor Drumi Bainov and Managing Editor Professor Svetoslav Nenov of the journal, and will introduce the author's recent result in foundations of mathematics which appeared in Mathematics for Scientists vol. 41, No. 4 - vol. 42, No. 3, Gendai-Suugaku-Sha, April, 2008 - March, 2009 \cite{[Kitada-godel]} in Japanese language. English was not sufficient for his purpose so that he preferred the language in writing the result. However as many people seem not to be familiar with Japanese language, he thought it would be helpful to those people to write an introductory explanation to the result in \cite{[Kitada-godel]}.



\subsection{Incompleteness theorem}\label{1.1}

\normalsize

As stated, syntactic incompleteness is the incompleteness consequent solely due to the formal treatment of the words and is regarded as the one which would hold independently of the meaning of the words.

Namely a theory $S$ has primitive symbols, and formal expressions which are sequences of those primitive symbols. We call ``words" the expressions that are constructed from primitive symbols by a set of definite rules. Other expressions are discarded as meaningless expressions. Words are divided into terms and formulae (well-formed formula, or in abbreviation, wff), where terms express individual objects and formulae express propositions, theorems, etc. In formulae there are wff's which are not correct although they have meanings. To exclude those incorrect formulae, we choose some obviously correct propositions and set them as axioms from which all of our reasoning starts. Only the propositions derived from those axioms by a set of rules of inference are regarded correct and are called theorems of the theory. The set of all the theorems is identified with the theory $S$. If, for any given meaningful proposition $A$ in the theory $S$, either of the affirmation $A$ or the negation $\neg A$ is a theorem of $S$, every meaningful proposition of $S$ is determined to be true or not. In such a case $S$ is called complete. If otherwise there is a proposition $A$ both of whose affirmation $A$ and negation $\neg A$ are not derivable from the axioms, the theory $S$ has a proposition whose validness is not decided by logical inferences from the axioms. In such a case the theory $S$ is called incomplete.

G\"odel's incompleteness theorem means that if we consider a number theory $S$ and assume that $S$ is consistent, then $S$ is incomplete. This theorem was proved in Kurt G\"odel's paper \cite{[G]}, \"Uber formal unentsceidebare S\"atze der Principia mathematica und verwandter Systeme I, Monatshefte f\"ur Mathematik und Physik, {\bf 38} (1931), 173-198.


\Large

\subsection{Outline of the proof of incompleteness theorem}\label{1.2}


\normalsize

The outline of the proof of the incompleteness theorem is as follows. Our work is to construct a proposition $G$ whose affirmation and negation is not provable in the number theory $S$. Such a proposition is generally the one whose meaning is interpreted as
$$
G=\mbox{``$G$ is not provable."}
$$
Suppose that such a $G$ is constructed in $S$. If we assume that $G$ is provable, then by the meaning of $G$, $G$ would not be provable, a contradiction. If otherwise we assume that the negation $\neg G$ is provable, then by the meaning of negation, $G$ is provable, contradicting $\neg G$. In either case $S$ is inconsistent. However as we have assumed that $S$ is consistent, the conclusion that $S$ is inconsistent is wrong. Thus we have to conclude that both of $G$ and $\neg G$ are not provable.

This is the essential part of the proof of G\"odel's incompleteness theorem.

To perform such an argument rigorously, we define primitive logical symbols, primitive predicate symbols, primitive function symbols, primitive object symbols, variable symbols, parentheses, and comma, and define terms of $S$ as those sequences constructed by finite repetitions of applications of the definite rules to primitive function symbols, primitive object symbols, variables, and parentheses. We then give a set of rules and define formulae of the theory $S$ as those expressions constructed by finite repetitive applications of the rules to terms, primitive logical symbols, primitive predicate symbols, variables, and parentheses.
Such a construction by repetitive applications of a finite number of rules is called a recursive or inductive definition.

In the case of number theory, it suffices to assume the primitive logical symbols like
$$
\Rightarrow,\ \ \wedge,\ \ \lor,\ \ \neg,\ \ \forall,\ \ \exists
$$
It is possible to reduce the number of symbols. However in that case we will need to introduce derived logical symbols to shorten the expressions. To avoid such complexity we assume the usual symbols as above. The meaning of those symbols is from the left to right as follows: ``imply," ``and," ``or," ``not," ``for all," ``there exists."

The primitive predicate symbol of number theory is just the equality
$$
=
$$
and primitive function symbols are as follows:
$$
+,\ \ \cdot,\ \ {}'
$$
where $+$ means summation, $\cdot$ is multiplication and the last prime ${}'$ means the successor. Namely a successor $s'$ of a term $s$ is $s+1$.

The primitive object symbol is just
$$
0
$$
which means zero. Every natural number is thought as some successor $0^{\prime\prime\dots\prime}$ of $0$.

The variables are
$$
a,\ b,\ c,\ \dots,\  x,\ y,\ z,\ \dots
$$
and parentheses are
$$
(\ \ ),\ \ \{\ \ \},\ \ [\ \ ],\ \ \dots
$$
Other auxiliary but important symbol is comma:
$$
,\ 
$$

Terms $s,t,r,\dots$ express natural numbers in the number theory $S$. The actual forms of the terms are the ones like $0^{\prime\prime\dots\prime}$ constructed from object symbol $0$ and function symbol ${}^\prime$, or the successors of variables as $a'$, $b''$, or the ones as $s+t$, $s\cdot t$ constructed from those by summation or multiplication, and so on. Formulae are constructed from terms $s,t,r,\dots$ with using predicate and logical symbols like $s=t+r$, $s=t\cdot r$, $\forall x (x=x)$, $\forall x\exists y(\neg(x=0)\Rightarrow(x=y'))$, $\dots$.

We choose some correct formulae from the set of formulae and assume them as the axioms of the theory $S$:
$$
A_1,\ A_2,\ \dots, \ A_k.
$$
Formulae derived from those axioms by applying rules of inference are called theorems of the theory $S$. As an example of rules of inference, the following is the famous modus ponens:
\begin{quotation}
If formulae $A$ and $A\Rightarrow B$ are true, then the formula $B$ is true.
\end{quotation}
This rule is also called syllogism. Formally this is written:
\begin{eqnarray*}
{\displaystyle{\frac{A,\quad (A)\Rightarrow (B)}{B}}}.
\end{eqnarray*}
Another rule is the rule of inference of predicate calculus
\begin{eqnarray*}
{\displaystyle{\frac{(C)\Rightarrow (F) }{(C)\Rightarrow (  \forall x  \left( F\right)  ) }}},
\end{eqnarray*}
where it is assumed that the formula $C$ does not include the variable $x$.

Applying recursively those rules to the axioms of the theory $S$, the set of all theorems is constructed.


\Large

\subsection{Self-referential proposition}\label{1.3}

\normalsize

The proposition $G$ in the previous subsection whose affirmation and negation are not provable is a proposition which refers to itself. It has been known since the Greek age that some of such self-referential propositions produce contradictions. For instance the well known Cretan paradox is stated in a sharper form as follows
\begin{quotation}
``This sentence is false."
\end{quotation}
The phrase ``this sentence" in the sentence refers to the sentence above itself. So if this sentence is true, then this sentence must be false by the meaning of the sentence, while if this sentence is false, then the negation ``this sentence is true" is true, contradicting the sentence itself, and we have infinite cycles. Or in other way around, either of the assumption ``this sentence is true" or ``this sentence is false" produces the contradiction. Therefore if we assume that our language is consistent, we have to conclude that the truth value of this sentence is not determined.

There are known many such self-referential propositions which produce contradiction. For example, does the sentence
\begin{quotation}
\F
``$n$ is the least natural number which cannot be defined by less than 30 words"\end{quotation}
define a natural number $n$? This sentence consists of 15 words. So if this sentence defines a natural number $n$, then $n$ is defined by less than 30 words, and contradicts the sentence itself. This sentence is also a self-referential sentence.

The following is well-known again:
\begin{quotation}
\F
``The barber of this village shaves all and only those people who do not shave themselves."
\end{quotation}
Does this barber shave himself?

As seen from those, self-reference appears to be the cause of the paradox. Can we then avoid contradictions if we do not make self-reference? In such a case, however, we might not be able to say anything meaningful. In fact recalling our daily conversations, we notice that there are few cases in which we do not make self-reference. If we avoid self-reference, we will have almost nothing to speak.

The G\"odel sentence referred to in the previous subsection
$$
G=\mbox{``$G$ is not provable"}
$$
is also self-referential sentence. However if we look at it more closely, we notice that this sentence refers to the theory $S$ from the level higher than the theory $S$. Those sentences which refer to the object theory $S$ are called the sentences on the meta level. The word ``meta" is a Greek word, meaning ``after" originally. Later it is used also to mean ``higher." The ``meta" in the word ``metamathematics" (which is in some cases used as another name of the research area `foundations of mathematics') is also used in the latter sense.

The G\"odel sentence which speaks about the theory $S$ from a higher level cannot refer to the sentence itself. Because the G\"odel sentence speaks about the object theory $S$ and hence it is on the meta level, so that unless mapping itself into the lower leveled object theory, it cannot speak about itself. In this sense, the actual cause of the contradiction of G\"odel sentence is that we regard the sentences on the meta level as the sentences on the lower object level.

If we think in this way, it would be expected that the G\"odel's incompleteness theorem is not a contradiction that arises merely by the syntactic formal calculus. There is semantic machinery hidden behind the total scenery which we see.


\Large

\subsection{Recursiveness}\label{1.4}

\normalsize

In subsection \ref{1.2} we stated that the expressions for terms and wff are defined recursively. ``Recursive" if mentioned in the context of constructing expressions means generating expressions by applying a finite number of rules repeatedly to definite primitive symbols. Or in the context of constructing theorems, it means generating theorems by repeatedly applying definite rules of inferences to axioms and earlier obtained theorems. An arbitrary introduction of new rules of construction is prohibited in such systems. This sort of restriction is naturally assumed in dealing with daily things by computers or computation machines. It is of course the case that we cannot expect to those automatic machines to deal with newly encountered things by introducing new appropriate rules. This is the same for the rules of organizations or societies. For some term until the rules are corrected, the work to deal with new things is left to the people who encounter those new things and problems. The recursive constructions of terms and wff and the recursive definitions of provability or correctness are mechanical ones and the provability is the concept based only on mechanical operations.

In this sense the G\"odel's incompleteness theorem can be interpreted as meaning that recursive definitions and constructions based on a finite number of groups of axioms and rules of inferences are insufficient. From this standpoint, it can be said that the incompleteness theorem can be taken as giving the possibility of finding new axioms and new rules of inferences.

Actually the axiom of choice and the continuum hypothesis are known to be independent of other axioms of set theory, and hence the theory is consistent even if the negation of those axiom and/or hypothesis is added instead of them. In this way, what is true is dependent on our sense which we see right or correct. G\"odel himself seems to have thought that there is some correct axiom about infinity.

Turning to natural phenomena around us, we notice that there are many phenomena which can be described by recursive methods. For example, if we try to define the fractal figures like ria coastlines, we arrive at recursive definitions. The differential equations which describe weather or waves are often non-linear partial differential equations. Those non-linear partial differential equations have terms in which the solutions appear and hence the solutions influence the solutions themselves recursively and self-referentially.

The fact that the self-referential and recursive description is useful and effective in describing mathematical systems as well as in describing natural phenomena seems to tell that the recursiveness is the fundamental feature of nature not only in humans' introspective considerations like metamathematics.

The description of nature becomes to be useful in making plans for the future only when it is described by mathematics and gives quantitative predictions. Considering those things, we seem to be able to say that it is natural that the foundation of mathematics itself has its basis in recursive definitions, and that the discovery of the recursive method of description has made it possible for humans to describe nature and build plans for the future.


\Large

\subsection{Metamathematics}\label{1.5}


\normalsize

As reviewed, metamathematics or the investigation of foundations of mathematics is a self-referential deed to see mathematics itself in mathematical method. Metamathematics is in this way self-referential and seems to be never related with the outside of mathematics. If we see other mathematics or other areas of science or other human activities, are there any non-self-referential activities? The answer would be left to the reader. However if we recall that the description of nature itself requires the self-referential method, we will see a natural answer.

In either case, we assume, for a moment, as is usually thought that the mathematicians are stubborn or eccentric and naive people who always refer to themselves. The computers invented and worked out by those people, however, come now to be useful and the necessities of daily life. This fact that the introspective mathematics and the metamathematics (or computation sciences), which is the uppermost area among the mathematical sciences, produced the computers useful for daily life would tell that the introspective and inner most activities have become recognized to have their own social values. It is now the age when mathematical predictions and statistical values are used in the political and economic decisions. It is thought that the importance and the value of mathematical thought is recognized by many people now in the age when the introspective ability is understood as an important ability of humans and plays an important role in deciding the future plans of humans.

Metamathematics or foundations of mathematics seemed to have been given almost no attention when the author was young not only in Japan but also in the west and other areas. Recently however it is encouraging and the amenity to see that many young people are making research activities in this field. He thinks that the age will come when people will be able to be engaged more freely and easily in such introspective activities. He hopes this paper would be helpful to those young people.

\section{Formal number theory}\label{chap:2}
\normalsize


As stated at the beginning of the previous section, G\"odel's first incompleteness theorem says ``If a theory $S$ including number theory is consistent, then there is a proposition $G$ which is not provable and refutable\footnote{$G$ is refutable if and only if the negation $\neg G$ is provable.}." A theory $S$ is called complete if for any given proposition $A$ of $S$, one can decide either of $A$ or the negation $\neg A$ is derived by logical inferences from the axioms of the theory $S$. Therefore the incompleteness theorem means that if a theory $S$ is consistent, then it is incomplete. That $S$ is consistent means that for any given proposition $B$, it is not the case that both of $B$ and the negation $\neg B$ are provable. In an inconsistent theory $S$, thus, there is a proposition $B$ such that $B$ and $\neg B$ are both provable, and hence in $S$ every proposition $C$ is provable. The incompleteness theorem above is rephrased as follows: ``A theory $S$ which includes number theory is either inconsistent or incomplete." Further the second incompleteness theorem says ``If a theory $S$ including number theory is consistent, then the consistency of $S$ is not provable by the method formalizable in the theory $S$." The second incompleteness theorem by G\"odel in 1931 is at least on its surface the one which denies the Hilbert formalism's program: ``A mathematical theory is shown to be sound by proving its consistency based on the finitary standpoint." This program was proposed by D. Hilbert to cope with the intuitionism proposed by L. E. J. Brouwer as a criticism to the situation of mathematics which had met several serious difficulties in its foundation around the year 1900. The procedure formalizable in the number theory is thought to be equivalent to the procedure of formal treatment of the words based on the finitary method. Therefore if it is not possible to show the consistency by the method formalizable in the number theory, it would mean that the consistency of number theory is not provable insofar as based on finitary standpoint. This would mean that Hilbert's program is not performable. In this sense, what is essential and important is the problem of consistency and it is not essential whether or not a theory is complete. However the second incompleteness theorem is a corollary of the first incompleteness theorem, so in order to discuss the problem of consistency, it is necessary first to discuss the completeness of the number theory.

\subsection{Formalism}\label{2.1}

\normalsize

It is useful in relation to the later discussion of ours to review here the situation of mathematics around the beginning of the 20th century till the formalism was proposed. As is well-known, at the end of the 19th century, exactly speaking from around the year 1870 to 1900, when the mathematically accurate treatment of real numbers by the use of the concept of set was successfully done by those people like K. Weierstra\ss, R. Dedekind, G. Cantor, there were found several difficulties in the treatment of sets like the paradox about the totality of ordinal numbers found by C. Burali-Forti in 1897, the paradox about the totality of sets found by G. Cantor in 1899, the paradox produced by the totality of the sets each of which does not have itself as an element of itself found by B. Russell in 1902-3, $\dots$. The Russell's paradox was found when he was writing an attempt \cite{Ru} to deduce mathematics from only the axioms of logic. To overcome this difficulty, he introduced the concept of type and order and the axiom of reducibility. In Russell's thought, the primary objects belong to type $0$, the properties about the objects of type $0$ is regarded to belong to type $1$. In the same way the types of $2$, $3$, $\dots$ are defined. Further to consider about the objects like relations or classes, it is necessary to introduce the notion of order\footnote{Actually it was necessary to exclude impredicative definitions. See  the later subsection \ref{12.3}.} inside each class of objects with the same type above type $0$. The introduction of the notion of order makes it impossible to develop the usual analysis\footnote{cf. footnote 2 above and subsection \ref{12.3}.}. To avoid this difficulty Russell introduced the axiom of reducibility\footnote{Chapter II of Vol. I of \cite{W-R}.} that asserts that to every property of the higher order there always corresponds a property of order $0$, which complements the estrangement caused by the classification into different orders. This axiom of reducibility was, as Russell himself later admitted, purely of pragmatic nature\footnote{See the introduction of the second edition of \cite{W-R}, page xiv.} and it is difficult to call it a purely logical axiom. The effort like this to deduce mathematics based only on logic was called logicism, and was given attempts to improve or revise by Carnap, Quine, etc., but there seem to be no followers after then. However the logical analysis of mathematics developed in the monumental work ``Principia Mathematica" by Whitehead and Russell \cite{W-R} gave influence on a theory of continuum and a set theory based on the standpoint of intuitionism (\cite{Hey}) and the construction of formal systems in Hilbert's formalism in the deeper level. Without the effort and contribution by Russell, the modern mathematics might have traced a more winding road. In fact, the title of G\"odel's paper on the incompleteness ``\"Uber formal unentsceidebare S\"atze der Principia mathematica und verwandter Systeme I" tells that the influence of Russell was large. Later H. Weyl \cite{We} criticized that in ``Principia mathematica," ``mathematics is no longer founded on logic, but on a sort of logician's paradise $\dots$." In the present age, logicism is the one forgotten as an attempt to give a basis to mathematics. However its substantial contributions to the modern mathematics should fairly be evaluated.

In the 1880's just when the theory of real numbers based on set theory was successfully done, L. Kronecker gave a criticism that the definitions treated in the theory of real numbers are just ``words," which do not let it possible to determine whether an actual object satisfies them. Later in 1908, L. E. J. Brouwer wrote a paper entitled ``The untrustworthiness of the principles of logic" \cite{B} and developed a criticism that the classical logic which goes back to Aristotle (384-322 B.C.) is derived from the logic applicable to finite sets and hence it is not justified that this logic is applicable to the mathematics of infinite sets. For example, in Euclid's Elements which is thought to be influenced by Aristotle, it is assumed that ``the whole is greater than any proper part." However this does not hold for infinite sets\footnote{Aristotle asserted that the paradox of Zeno of Elea arises from assuming as if the infinity exists. In this sense, the standpoint that the actual infinity does not exist seems to date back to Aristotle.}. Brouwer argued that the problems occur from the unlimited application of ``the law of the excluded middle" to infinite sets. Namely the law states that for any proposition $A$ either of $A$ or its negation $\neg A$ holds. Let for instance the proposition $A$ mean that there exists an element of the set $M$ which satisfies the property $P$. Then the negation $\neg A$ means that every element of $M$ does not satisfy the property $P$. If the set $M$ is a finite set, it is possible to determine whether $A$ or $\neg A$ holds, by checking every element of $M$ one by one. However, if $M$ is an infinite set, it is in principle impossible to perform the check to every element of the infinite set $M$. According to Brouwer, therefore, the law of the excluded middle is the law that should not be applied to general sets including infinite sets. Like this Brouwer's thought is the one based on the finitary standpoint, and is called intuitionism. In his thought, the actual (or existential or completed or extended) infinity is regarded as a fictitious thing. This type of thought goes back to C. F. Gau\ss, 1831 in the modern age, and has something common with the thought of computability in the present age. In fact, Brouwer seems to have thought that ``mathematics is identical with the exact part of our thinking. $\dots$ no science, in particular not philosophy or logic, can be a presupposition for mathematics" (cf. \cite{Hey}). This coincides with the thought of computation which does not assume any philosophy or logic.

The difficulties found at the end of the 19th century required in this way the reinquiry and reexamination into the existing mathematics. These difficulties and the criticism by Brouwer were taken seriously by D. Hilbert and his collaborators P. Bernays, W. Ackerman, J. von Neumann and others. Hilbert proposed the following program. Namely ``classical mathematics which deals with  the infinity is formulated as a formal axiomatic theory, and the treatment of the theory is made based on the finitary standpoint. If the theory is proved to be consistent by this finitary method, it is said that the formal axiomatic theory is sound." In this program, the treatment of the formal theory which deals with the infinity is based on the finitary standpoint which is equivalent to the Brouwer's intuitionism. Therefore if one can perform the program, it would mean that the difficulties and the criticism pointed by Brouwer are avoided. This standpoint is called formalism. In this way, the proof of consistency of a theory is itself considered again in mathematical method. Hilbert called such a mathematical consideration ``metamathematics" or ``proof theory."

As stated in the preface of the present section, the incompleteness theorem that made it impossible to perform the program of formalism is the second incompleteness theorem. However as mentioned, the second incompleteness theorem is a corollary of the first incompleteness theorem except for technical details. Therefore to see the problem in the foundation of mathematics or metamathematics, the primary thing is to show the first incompleteness theorem. In this section, as the first step toward this purpose, we introduce the reader to the notion of formal system and describe how to write down the number theory in the form of a formal system. We state that our description depends somewhat on the descriptions of the books S. C. Kleene, Introduction to Metamathematics, North-Holland Publishing Co. Amsterdam, P. Noordhoff N. V., Groningen, 1964 \cite{K} and H. Kitada and T. Ono, Introduction to Mathematics for Scientists, Gendai-Suugaku-Sha, 2006 \cite{Kitada-book}.


\Large

\subsection{Primitive symbols, terms, and formulae}\label{2.2}

\normalsize

G\"odel's theorem holds for a formal mathematical theory which includes number theory as a subsystem. It is therefore sufficient to prove the theorem for the number theory itself. In this case the theorem reads ``If the number theory $S$ is consistent, there is a proposition $G$ whose affirmation and negation are both unprovable." The point is in the word ``unprovable."

Number theory consists of the ordinal axioms of logic, the rules of inferences and the axioms of number theory. Theorem in the number theory is the proposition which is derivable from the axioms by applying the rules of inferences to them. The incompleteness theorem means that $G$ and the negation $\neg G$ are not obtained by this method.

To prove the incompleteness theorem, it is therefore necessary to write down the axioms of logic and mathematics and the rules of inferences, and need to show that it is not possible to prove the proposition $G$ and the negation $\neg G$ by using those axioms and rules. To grasp the usage of axioms and rules of inferences, it is necessary to determine the primitive symbols and to give the rules to construct propositions by using the symbols. Then it needs to explicitly list the rules of inferences with using those symbols. To do so, we introduce, as in subsection \ref{1.2}, the primitive symbols which are necessary to write down the number theory. Primitive symbols consist of primitive logical symbols, primitive predicate symbols, primitive function symbols, primitive object symbols, variable symbols, parentheses, and comma, as follows:

\begin{enumerate}
\item primitive logical symbols:
\beqs
&&\hskip-20pt\Rightarrow \mbox{ (imply)},\ \ \wedge\mbox{ (and)},\ \ \lor\mbox{ (or)},\ \ \neg\mbox{ (not)},\\
&&\hskip-20pt\forall\mbox{ (for all)},\ \ \exists\mbox{ (there exists)}
\enes
\item primitive predicate symbols: $$= \mbox{ (equals)}$$
\item primitive function symbols:
$$
+\mbox{ (plus)},\ \ \cdot\mbox{ (times)},\ \ {}'\mbox{ (successor (prime))}
$$
\item primitive object symbols: $$0 \mbox{ (zero)}$$
\item variable symbols:
$$
a,\ b,\ c,\ \dots,\  x,\ y,\ z,\ \dots
$$
\item parentheses:
$$
(\ \ ),\ \ \{\ \ \},\ \ [\ \ ],\ \ \dots
$$
\item comma:
$$
,
$$
\end{enumerate}

When $x$ is a variable, the logical expression $\forall x$ is called a universal quantifier and $\exists x$ is called an existential quantifier.

From those symbols, we first define terms which will denote the objects in number theory as follows. This type of definition is called a recursive or inductive definition.

\begin{enumerate}
\item $0$ is a term.
\item A variable is a term.
\item If $s$ is a term, $(s)^{\prime }$ is also a term.
\item If $s, t$ are terms, $(s)+(t)$ is a term.
\item If $s, t$ are terms, $(s) \cdot (t)$ is a term.
\item The only expressions defined by 1-5 are the terms of the number theory.
\end{enumerate}

In particular, the terms in whose construction there does not appear any variable are called numerals or numeral terms.

We next define formula, or well-formed formula (wff) as follows:

\begin{enumerate}
\item If $s$ and $t$ are terms, then $(s)=(t)$ is a formula or wff. The formula of this form is called an atomic formula.
\item If $A, B$ are formulae, then
$$
(A)\Rightarrow (B)
$$
is also a formula.
\item If $A,B$ are formulae,
$$
(A)\wedge (B)
$$
is also a formula.
\item If $A,B$ are formulae,
$$
(A)\lor (B)
$$
is also a formula.
\item If $A$ is a formula,
$$
\neg(A)
$$
is also a formula.
\item If $x$ is a variable and $A$ is a formula, then $\forall x(A) $ is a formula.
\item If $x$ is a variable and $A$ is a formula, $\exists x(A)$ is also a formula.
\item The only expressions defined by 1-7 are the formulae of the number theory.\end{enumerate}


\Large

\subsection{Axioms and rules of inference}\label{2.3}


\normalsize

As stated in the previous section we adopt some of the formulae as the axioms of the number theory, and define theorems or provable formulae as the formulae obtained by applying the rules of inferences to the axioms.

It suffices to assume as the rules of inferences the two rules as mentioned in the previous section. However to simplify the descriptions we assume the following three rules\footnote{Here we follow \cite{K}.}. In the following we assume that the formula $C$ does not contain the variable $x$.

\begin{enumerate}
\item[$I_1$:] Modus ponens. (Syllogism): If the formula $A$ is true and $A\Rightarrow B$ is true, then the formula $B$ is also true.
\begin{eqnarray*}
{\displaystyle{\frac{A,\quad (A)\Rightarrow (B)}{B}}}
\end{eqnarray*}
\item[$I_2$:] Generalization: For any variable $x$, from $F$ follows $\forall x (F)$.
\begin{eqnarray*}
{\displaystyle{\frac{(C)\Rightarrow (F) }{(C)\Rightarrow (\forall x\left(F\right))}}}
\end{eqnarray*}
\item[$I_3$:] Specialization: For any variable $x$, from $F$ follows $\exists x (F)$.
\begin{eqnarray*}
{\displaystyle{\frac{(F)\Rightarrow (C) }{(\exists x\left(F\right))\Rightarrow (C)}}}
\end{eqnarray*}
\end{enumerate}

The axioms of number theory are as follows. In the followings, we omit the unnecessary and obvious parentheses.

The first group consists of the axioms of propositional calculus.

A1. Axioms of propositional calculus. ($A,B,C$ are arbitrary formulae.)
\begin{enumerate}
\item $A\Rightarrow (B\Rightarrow A)$
\item $(A\Rightarrow B)\Rightarrow\left((A\Rightarrow (B\Rightarrow C)) \Rightarrow(A\Rightarrow C) \right)$
\item $A\Rightarrow((A \Rightarrow B)\Rightarrow B)$

(a rule of inference)
\item $A\Rightarrow(B\Rightarrow A\wedge B)$
\item $A\wedge B\Rightarrow A$
\item $A\wedge B\Rightarrow B$
\item $A\Rightarrow A\lor B$
\item $B\Rightarrow A\lor B$
\item $(A\Rightarrow C)\Rightarrow((B\Rightarrow C)\Rightarrow(A\lor B\Rightarrow C))$
\item $(A\Rightarrow B)\Rightarrow((A\Rightarrow \neg B) \Rightarrow \neg A)$
\item $\neg\neg A \Rightarrow A$
\end{enumerate}

The second group consists of the axioms of predicate calculus.

We introduce the following terminologies. If an occurrence of a variable $x$ is in the scope of influence of a quantifier $\forall x$ or $\exists x$, the occurrence is called a bound variable. Otherwise, it is called a free variable.

We call a term $t$ free for $x$ in a formula $A(x)$ which has $x$ as a free variable, if no free occurrence of $x$ in $A(x)$ is in the scope of a quantifier $\forall y$ or $\exists y$ for any variable $y$ of $t$.

A2. Axioms of predicate calculus. ($A$ is an arbitrary formula, $B$ is a formula which does not contain the variable $x$ free, $F(x)$ is a formula which contains a free variable $x$, and the term $t$ is free for $x$ in the formula $F(x)$.)
\begin{enumerate}
\item $(B\Rightarrow A)\Rightarrow
(B\Rightarrow (\forall x A))$ 

(a rule of inference)
\item $\forall x F(x)\Rightarrow F(t)$
\item $F(t)\Rightarrow \exists x F(x)$
\item $(A\Rightarrow B)\Rightarrow ((\exists x A)\Rightarrow B)$

(a rule of inference)
\end{enumerate}

The same rules of inferences appear in the list of axioms to make the same rules of inferences effective inside the formal system of number theory.

The reason that we made an assumption that the term $t$ is free for $x$ is as follows. For instance, let us consider
$$
F(x)=\exists y(x=y)
$$
and let the term $t$ be
$$
t=a+y.
$$
In this case, the term $t$ is not free for $x$ in the formula $F(x)$. If we substitute this term $t$ to the place of the free variable $x$, we obtain
$$
F(t)=\exists y(a+y=y).
$$
The variable $y$ in the term $t$ is bound by the quantifier $\exists y$ and the axiom 2 of the predicate calculus does not hold. Our assumption that the term $t$ is free for $x$ was made to exclude such cases.

The third and fourth groups consist of the axioms of number theory.

A3. Axioms of number theory. ($a,b,c$ are arbitrary variables.)
\begin{enumerate}
\item $a'=b'\Rightarrow a=b$
\item $\neg ( a'=0 )$
\item $a=b\Rightarrow (a=c\Rightarrow b=c)$
\item $a=b\Rightarrow a'=b'$
\item $a+0=a$
\item $a+b'=(a+b)'$
\item $a\cdot 0 =0$
\item $a\cdot b'= a\cdot b  +a $
\end{enumerate}

A4. Axiom of mathematical induction. ($F$ is an arbitrary formula.)
$$
\left(F(0)\wedge\forall x(F(x)\Rightarrow F(x'))\right)\Rightarrow\forall xF(x)
$$


\Large

\subsection{Proof, theorems, and deducibility}\label{2.4}


\normalsize

We make the following definition to define the theorems and the proofs of the formal number theory.

\SP

\begin{df}\label{df2.1}
{\rm 
A formula $C$ is called an immediate consequence of a formula $A$ or two formulae $A, B$ if $C$ is below the line and the other(s) are above the line, in the rules $I_1$, $I_2$ or $I_3$.}
\end{df}

We then define proof, provability and theorem as follows.
\SP

\begin{df}\label{df2.2}
{\rm A finite sequence of formulae, each consecutive pair of which is divided by a comma, is called a formal proof, if each formula $F$ of the sequence is an axiom of number theory or is an immediate consequence of the formula(e) which appear(s) before $F$. A formal proof is said to be the proof of the formula $E$ which appears at the end of the proof, and the formula $E$ is said to be provable in number theory or is called a theorem of number theory.}
\end{df}

If a formula $E$ is deducible in the system in which some formulae are added, $E$ is called deducible from the added assumption formulae.
\SP

\begin{df}\label{df2.3}
{\rm Given a finite number of formulae $D_1,\cdots,D_\ell$ $(\ell\ge 0)$, a finite sequence of formulae is called a formal deduction from the assumption formulae $D_1,\cdots,D_\ell$, if each formula $F$ of the sequence is an axiom or one of the formulae $D_1,\cdots,D_\ell$, or an immediate consequence of the formula(e) which appear(s) before $F$. A deduction is said to be a deduction of its last formula $E$, and the formula $E$ is said to be deducible from the assumption formulae or is called the conclusion of the deduction. We write this as follows.
$$
D_1,\cdots,D_\ell\vdash E.
$$
}
\end{df}

In the case when $\ell=0$, this is written
$$
\vdash E.
$$
This is equivalent with that $E$ is a theorem of number theory.


As we have seen, all definitions of terms, formulae, proofs, theorems, deductions are recursive or inductive definitions. This means that what can be done in a formal system is fundamentally just the mechanical operation. This is related with the concept of computability, which we will not have a chance to touch in this article.

\section{G\"odel numbering}\label{chap:7}

\normalsize


As we have seen in section \ref{chap:2}, the terms, formulae, and theorems are defined by applying a finite number of rules repeatedly to some number of symbols in mechanical way. This procedure of construction is called a recursive or inductive construction as it constructs things by applying the rules of the same form repeatedly.

On the other hand, the procedure which can be described in the formal number theory $S$ is the operation of finite natural numbers, and the mathematical inductions is assumed as an axiom in $S$. Therefore, the recursive procedure of construction of terms, formulae, theorems will be able to be mapped into the operation of natural numbers inside the formal number theory $S$. Namely it will be possible to assign a fixed natural number to each symbol, and from it one can form a rule to assign a unique natural number to each of terms, formulae, proof sequences, etc. If such a rule is made, it will be possible to map the fact that a given sequence of formulae is a proof to a proposition about natural numbers. That a formula $A$ is provable means that there is a proof whose last formula is $A$. Thus it will be possible to express the fact that a given formula $A$ is provable as a proposition in the number theory $S$. As well, it will be possible to express the fact that a given formula $A$ is refutable, i.e. that the negation $\neg A$ of $A$ is provable as a proposition in $S$. A rule that assigns a natural number to each primitive symbol and from this assigns a natural number to a general expression constructed from the primitive symbols in a recursive way is called G\"odel numbering. We denote the natural number which is assigned by this rule to an expression $E$ by $g(E)$, and call it the G\"odel number of the expression $E$. An expression $E$ which has G\"odel number $n$ is expressed as $E_n$. When $E$ is a formula $A$, it is written as $A_n$. Thus $n=g(E_n)$, $n=g(A_n)$, etc. This map $g$ from the totality of expressions to the set $\N=\{0,1,2,\dots\}$ of natural numbers is defined as one to one mapping, but is not onto mapping. Namely $g$ is defined as an injection but is not a surjection in general. Hence for some natural number $m$, there can be the case that there is no expression $E$ such that $g(E)=m$.

There is no problem in assigning natural numbers to symbols and expressions of a formal theory, and thence assigning natural numbers to deductions or proofs. A point that is noted here is that although the assignment which we mentioned looks as if it maps the expressions on the meta level to natural numbers inside the formal system on the object level, the mapping $g$ is actually a map from the set of expressions on the meta level to the set $\N$ of natural numbers on the meta level.

When proving G\"odel's incompleteness theorem, we denote the formula which is obtained by substituting the natural number or numeral $\nnn$ in the formal system:
\beq
\nnn=0^{\overbrace{\prime\prime\dots\prime}^{n\mbox{\scriptsize{ factors}}}}\label{nbar}
\ene
that corresponds to the natural number $n$ on the meta level to the variable $x$ of a formula $F(x)$ by
$$
F(\nnn).
$$
This operation of substitution itself is the one on the meta level. The formula $F({\nnn})$ that is obtained by this substitution is defined by
\begin{eqnarray}
F({\nnn})\stackrel{\scriptsize\mbox{{\it def}}}=\forall x \left(x=n\Rightarrow F\right)\label{dainyuu}.
\end{eqnarray}
Here the reason we do not use the expression $F(x)$ which has $x$ as a variable is that we define $F({\nnn})$ by this formula even when the formula $F$ does not have $x$ as a variable.

We note that in the definition of the formula $F(\nnn)$ which must be a formula in the formal system $S$, there appears the natural number $n$ on the meta level. This fact corresponds to the fact that there appears the natural number $n$ on the meta level to specify the number of primes in the definition of a numeral $\nnn$ of the formal system $S$ in equation \eq{nbar} above (the number $n$ above the $\prime\prime\dots\prime$ on the upper right side of $0$ in the definition \eq{nbar} of $\nnn$). ``Substitution" whatever it looks natural is inevitably a subjective and artificial deed performed by some subject on the meta level. Namely the construction of the numeral $\nnn$ corresponding to $n$ and the substitution of it to $x$ are possible only when some subject recognizes a number $n$ on the meta level.

In fact the formal system $S$ of number theory is regarded as a subsystem of a formal set theory $T$. In this case the number $0$ on the meta level corresponds to the empty set $\emptyset$ in $T$, and $\lceil 0\rceil$ is defined as follows.
$$
\forall x( x=\lceil 0\rceil \Leftrightarrow \forall u(u\not\in x)),
$$
where $A\Leftrightarrow B$ is the abbreviation of $(A\Rightarrow B)\wedge(B\Rightarrow A)$. We define the successor function ${}^\prime$ for a set $m$ by
$$
m'=m\cup \{m\}.
$$
Then the theory of natural numbers is regarded to be a subsystem of the set theory. In the formal number theory $S$ the number $0$ on the meta level is regarded naturally to correspond to the numeral $\lceil 0\rceil$ of the system $S$. However in the case of set theory, there is no such `natural' correspondence, and we have to give an appropriate `correspondence' with considering the original meaning of set theory.


\subsection{G\"odel numbering}\label{7.2}


\normalsize

We are now in a position to give a concrete G\"odel numbering $g$. There are infinitely many ways of giving mappings $g$, and we can take whatever $g$ if it satisfies the properties stated above. We adopt here a variant of the method given in \cite{Kitada-book} of assigning binary numbers to expressions. Namely we first assign natural numbers to primitive symbols as follows.
\beqs
\hskip-22pt&&\begin{array}{cccccccccc}
{}^\prime  &
  0 &
 ( &
 ) &
 \{ &
 \} &
 [ &
 ] &
 + &
 \cdot \\
2^{0}  & 2^{1} & 2^{2}  & 2^{3} & 2^{4} & 2^{5} & 2^{6}  & 2^{7}  & 2^{8} & 2^{9}
\end{array}\\
\hskip-22pt&&\begin{array}{cccccccccc}
 = &
 \Rightarrow &
 \wedge &
 \lor &
 \neg &
 \forall &
 \exists &
 ,  \\
 2^{10} & 2^{11} & 2^{12} & 2^{13} & 2^{14} & 2^{15} & 2^{16} & 2^{17}
\end{array}
\enes
To the expressions constructed from those primitive symbols, we assign G\"odel numbers inductively as follows. First we assign $0$ to the empty expression. I.e. when the G\"odel number is $x=0$, the expression $E_x$ corresponding to $x=0$ is empty, and we regard that there is no expression which corresponds to $0$. Next for two natural numbers $n$, $m$, letting $\ell(m)$ denote the number of figures of the binary expression of $m$, we define the product operation $\star$ by
$$
n\star m= 2^{\ell(m)} \cdot n + m.
$$
Here we define $\ell(m)=0$ for $m=0$.
Now for two expressions $A_1, A_2$ with G\"odel numbers $g(A_1)$, $g(A_2)$, we define the G\"odel number $g(A_1A_2)$ for the connected expression $A_1A_2$ of $A_1$ and $A_2$ in this order by
$$
g(A_1A_2)= g(A_1)\star g(A_2).
$$
This mapping $g$ is obviously one to one.

For example, the G\"odel number of $(0)'$ is calculated as follows. First as the G\"odel number of $($ is $2^{2}$ and that of $0$ is $2^{1}$, we have $n=2^{2}=(100)_2$, $m=2^{1}=(10)_2$ and $\ell(m)=2$. Thus the G\"odel number of $(0$ is
$$
n\star m=2^2\cdot 2^2+2^1=2^4+2^1=(10010)_2.
$$
Namely in binary number the G\"odel number of $($ is $100$, and that of $0$ is $10$. Connecting these consecutively we obtain the G\"odel number $10010$ of $(0$. Similarly the G\"odel number of $(0)$ is $100101000$, and that of $(0)'$ is $1001010001$.

In the definition of G\"odel number above, there is no definition of the G\"odel number of the variables $a,b,c,\dots,x,y$, $z,\dots$. This is because we can express variables by connecting the primitive symbols without overlapping with other expressions like terms or formulae as follows.
\beq
a&\mbox{is}&(0^\prime),\nom\\
b&\mbox{is}&(0^{\prime\prime}),\nom\\
c&\mbox{is}&(0^{\prime\prime\prime}),\nom\\
&&\hskip-18pt\dots\label{hensuu}
\ene
In the following we follow this convention.
\SP

The following lemma will be crucial in the later section \ref{chap:10}.
\SP

\begin{lem}\label{lem7.5}{\sl Let $a\ge 0$ be a natural number and let $w$ be the natural number such that $w'=2^a$, where $w'$ is a successor of the natural number $w$ (i.e. $w'=w+1$). Then we have
\beq
g(a)=2^1\star w\label{2-sin}.
\ene}
\end{lem}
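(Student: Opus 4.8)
The plan is to read both sides of \eq{2-sin} as G\"odel numbers of explicit strings of primitive symbols and then to collapse them using the concatenation rule $g(A_1A_2)=g(A_1)\star g(A_2)$. First I would unwind the left-hand side: by \eq{nbar} the numeral $\lceil a\rceil$ corresponding to the meta-level number $a$ is the string consisting of the object symbol $0$ followed by $a$ copies of the successor symbol, so $g(a)$ is the G\"odel number of $0\underbrace{{}'\cdots{}'}_{a}$. Since $0$ has been assigned the G\"odel number $2^1$, we have $2^1=g(0)$, which already identifies the first factor on the right-hand side of \eq{2-sin}.

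Next I would identify the second factor $w$. Since $w$ is defined by $w'=2^a$, we have $w=2^a-1$, and I would show by induction on $a$ that $2^a-1$ is precisely the G\"odel number of the string $\underbrace{{}'\cdots{}'}_{a}$ of $a$ successor symbols: the base case $a=0$ is the empty string, with G\"odel number $0=2^0-1$ (recall $\ell(0)=0$), and since $g({}')=2^0=1$ with $\ell(1)=1$, appending one more prime sends a string of G\"odel number $N$ to $N\star 1=2^{\ell(1)}N+1=2N+1$, so starting from $0$ we reach $2^a-1$ after $a$ steps. Thus $w=g(\underbrace{{}'\cdots{}'}_{a})$. Now the numeral $0\underbrace{{}'\cdots{}'}_{a}$ is exactly the concatenation of $0$ with this $a$-fold string of primes, so the homomorphism property of $g$ gives $g(a)=g(0)\star g(\underbrace{{}'\cdots{}'}_{a})=2^1\star w$, which is \eq{2-sin}.

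I expect the only genuine difficulties to be bookkeeping rather than conceptual. One must handle the degenerate case $a=0$ (where $w=0$ and $\ell(w)=0$) carefully, and one must be explicit that $\lceil a\rceil$ is taken in the de-parenthesized form of \eq{nbar} rather than as the fully parenthesized successor term, since these two strings carry different G\"odel numbers. As a numerical cross-check one can evaluate both sides directly in binary: $g(a)$ reads off as $(10\underbrace{1\cdots1}_{a})_2=3\cdot2^a-1$, while $2^1\star w=2^{a+1}+(2^a-1)=3\cdot2^a-1$, confirming the identity.
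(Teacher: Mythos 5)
Your proposal is correct and follows essentially the same route as the paper: both identify $2^1$ as $g(0)$, recognize $w=2^a-1$ as the binary string of $a$ ones, i.e.\ the G\"odel number of the block of $a$ successor symbols, and conclude by the concatenation rule $g(A_1A_2)=g(A_1)\star g(A_2)$. Your explicit induction via $N\star 1=2N+1$ and the numerical cross-check $3\cdot 2^a-1$ merely make rigorous what the paper asserts by inspection of the binary expansions.
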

\begin{proof}
The numeral corresponding to a natural number $a\ge0$ is
\vskip-0pt
$$
0^{\overbrace{\prime\prime\dots\prime}^{a\mbox{\scriptsize{ factors}}}}.
$$
\vskip-2pt
By definition
\vskip-2pt
$$
g(0)=2^1,\quad g({}^\prime)=2^0=(1)_2.
$$
\vskip-2pt
Thus
\vskip-2pt
$$
g(a)=g(0^{\overbrace{\prime\prime\dots\prime}^{a\mbox{\scriptsize{ factors}}}})=2^1\star (\overbrace{11\dots 1}^{a\mbox{\scriptsize{ factors}}})_2.
$$
\vskip-2pt
The natural number $w$ such that $w'=2^a$ is in binary expression a sequence of length $a$ of the G\"odel number $2^0=(1)_2$ of prime symbol ${}^\prime$. For instance, if $a=2$, then $w'=w+1=2^a=2^2=(100)_2$ and $w=(11)_2$. Therefore for  $w$ such that $w'=2^a$, we have $w=(\overbrace{11\dots 1}^{a\mbox{\scriptsize{ factors}}})_2$, which proves \eq{2-sin}.
\end{proof}


\Large

\subsection{Incompleteness theorem}\label{7.3}


\normalsize

First we define the following two predicates.



\begin{df}\label{df7.6} {\rm
\begin{enumerate}
\item[ 1)] The predicate $\GGG(a,b)$ means the following.

``A formula $A_a$ with G\"odel number $a$ has just one free variable $x$, and an expression $E_b$ with G\"odel number $b$ is a proof of the formula $A_a=A_a(\aaa)$ obtained from $A_a=A_a(x)$ by substituting $\aaa$ into $x$."
\item[ 2)] The predicate $\HHH(a,b)$ means the following.

``A formula $A_a$ with G\"odel number $a$ has just one free variable $x$, and an expression $E_b$ with G\"odel number $b$ is a proof of the formula $\neg A_a=\neg A_a(\aaa)$ obtained from $\neg A_a=\neg A_a(x)$ by substituting $\aaa$ into $x$."
\end{enumerate}}
\end{df}

We introduce the following notion.

\begin{df}\label{df7.2}{\rm Let $\RRR(x_1,\dots,x_n)$ be a predicate (or relation) about $n(\ge0)$ objects. This predicate is said to be numeralwise expressible in the formal system $S$ if there is a formula $r(u_1,\dots,u_n)$ in $S$ with exactly $n$ free variables $u_1,\dots,u_n$ such that for an arbitrarily given $n$-tuple of natural numbers $x_1,\dots,x_n$, the followings hold.
\begin{enumerate}
\item[ i)] If $\RRR(x_1,\dots,x_n)$ is true, then $\vdash r(\xxxa,\dots,\xxxn)$.\item[ ii)] If $\RRR(x_1,\dots,x_n)$ is false, then $\vdash \neg\hskip1pt r(\xxxa,\dots,\xxxn)$.
\end{enumerate}
In this case, $\RRR(x_1,\dots,x_n)$ is said to be numeralwise expressed by the formula $r(u_1,\dots,u_n)$.
}
\end{df}

It will be shown that the following holds.

\begin{thm}\label{th7.7}{\sl The predicates $\GGG(a,b)$ and $\HHH(a,b)$ in definition \ref{df7.6} are both numeralwise expressed in $S$ by some formulae $g(a,b)$ and $h(a,b)$ respectively.
}
\end{thm}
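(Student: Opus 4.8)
The plan is to reduce the claim to the standard representability theorem for recursive relations. Both $\GGG(a,b)$ and $\HHH(a,b)$ describe a purely mechanical verification: given the binary digits of $a$ and $b$, one checks that $A_a$ is a formula with a single free variable $x$, forms the self-substitution $A_a(\aaa)$ (respectively $\neg A_a(\aaa)$), and verifies that the sequence coded by $b$ is a formal proof ending in that formula. Since every one of these checks is a bounded, decidable operation on the binary strings produced by the G\"odel numbering, the first goal is to show that $\GGG$ and $\HHH$ are primitive recursive relations; the theorem then follows from the general fact that every primitive recursive (indeed recursive) relation is numeralwise expressible in $S$ in the sense of Definition \ref{df7.2}.

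For the first goal I would arithmetize the syntax of section \ref{chap:2} step by step. The operation $\star$ and the length function $\ell(m)$ of subsection \ref{7.2} are themselves primitive recursive, being built from powers of $2$, multiplication, and the base-$2$ digit count, so concatenation and extraction of G\"odel numbers are primitive recursive. On top of these I would define, by the usual course-of-values recursion, predicates such as ``$x$ is the G\"odel number of a term,'' ``$x$ is the G\"odel number of a formula,'' ``$x$ codes an axiom of one of the groups A1--A4,'' and the substitution function that sends the pair $(a,\aaa)$ to the G\"odel number of $A_a(\aaa)$ in the sense of \eq{dainyuu}; Lemma \ref{lem7.5} supplies exactly the arithmetic of $g(a)$ needed at the substitution step. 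The immediate-consequence relation corresponding to the three rules $I_1,I_2,I_3$ of subsection \ref{2.3} is likewise primitive recursive, and assembling these pieces expresses ``$b$ codes a proof whose last formula is $A_a(\aaa)$,'' which is $\GGG(a,b)$, together with the analogous statement with $\neg A_a(\aaa)$, which is $\HHH(a,b)$.

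It then remains to invoke the representability theorem, which I would prove by induction on the construction of the function. The base functions (zero, successor, projections) and closure under composition are routine, the relevant formulae being written directly using the axioms A3 of number theory. The one genuinely delicate closure property is primitive recursion, and here lies the main obstacle: a single first-order formula has a fixed quantifier structure, whereas the defining equations of a primitive recursive function involve iterating an unbounded number of times. The standard remedy is G\"odel's $\beta$-function, $\beta(c,d,i)=\mathrm{rem}(c,1+(i+1)d)$, which by the Chinese Remainder Theorem can encode an arbitrary finite sequence of values in the single pair $(c,d)$. Using $\beta$ one rewrites the recursion as the assertion that there exists a coded sequence whose successive entries respect the recursion equations, and this assertion becomes an ordinary first-order formula. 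Thus the crux of the whole argument is establishing that $\beta$ has the required encoding property and that the Chinese Remainder Theorem is available in $S$; once this is in hand, the two-sidedness required by Definition \ref{df7.2} comes for free, since for a decidable relation both it and its complement are primitive recursive and hence both get represented. Assembling the primitive recursive building blocks of the previous step then yields the expressing formulae $g(a,b)$ and $h(a,b)$ for $\GGG$ and $\HHH$.
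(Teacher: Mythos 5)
Your argument is correct in outline, but it follows a genuinely different route from the paper. You reduce the theorem to the general representability theorem for recursive relations (the paper's Theorem \ref{th9.1}) by first checking that $\GGG(a,b)$ and $\HHH(a,b)$ are primitive recursive and then invoking G\"odel's $\beta$-function and the Chinese Remainder Theorem to handle closure under primitive recursion. The paper explicitly declines this path: immediately after stating Theorem \ref{th9.1} it says it will \emph{not} prove that theorem, and instead constructs the expressing formulae $g(a,b)$ and $h(a,b)$ directly, by writing out the entire arithmetization of syntax as explicit formulae of $S$ built on the concatenation operation $\star$ (items {\bf 1}--{\bf 30} of section \ref{chap:9} and {\bf 31}--{\bf 34} of section \ref{chap:10}). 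Where you would use the $\beta$-function to code computation histories, the paper codes the needed sequence $(0,1),(1,2),(2,4),\dots$ for $y=2^x$ by hand with the separator symbols $2^{18},2^{19}$ (items {\bf 31}--{\bf 32}), and it uses Lemma \ref{lem7.5} to express the diagonal substitution $a\mapsto g(A_a(\aaa))$ via $\mbox{sub}_a(x,2^1\star w)$ with $w'=2^a$. Your approach is more modular and yields the theorem for every recursive relation at once, at the cost of proving the representability theorem the paper skips; the paper's approach avoids the $\beta$-function machinery entirely but must exhibit every syntactic predicate concretely, and note that several of its formulae (Term, Form, Proof, and the $\exists w$ in {\bf 33}--{\bf 34}) contain unbounded quantifiers, so the refutability clause ii) of Definition \ref{df7.2} is less automatic there than in your bounded, primitive-recursive setup. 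One small point to tighten in your write-up: the two-sidedness of Definition \ref{df7.2} does not come merely from the complement of a decidable relation being recursive; you need the representability theorem in the form that a single formula $r$ satisfies both clauses i) and ii), which is what representing the characteristic function provides.
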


We now define Rosser formula.

\begin{df}\label{df7.8}{\rm Let $q$ be the G\"odel number of the following formula.
$$
\forall b  \left( 
g(a,b)\Rightarrow \exists c\hskip1pt(c\le b \hskip1pt\land \hskip0pt h(a,c))
\right) .
$$
Namely
\beq
A_{q}(a) = \forall b  \left(g(a,b)\Rightarrow  \exists c\hskip1pt(c\le b \hskip1pt\land \hskip0pt h(a,c))
\right). \nonumber
\ene
Then
\beq
A_{q}(\qqqq)= \forall b  \left(g(\qqqq,b)\Rightarrow \exists c\hskip1pt(c\le b \hskip1pt\land \hskip0pt h(\qqqq,c))
\right). \nonumber
\ene
Here
\beqs
&&\hskip-28ptg(\qqqq,b)=\forall a \left(a=q\Rightarrow g(a,b)\right),\\
&&\hskip-28pth(\qqqq,c)=\forall a \left(a=q\Rightarrow h(a,c)\right).
\enes
$A_q(\qqqq)$ is called Rosser formula.}
\end{df}


\begin{thm}\label{th7.9}{\sl (G\"odel's incompleteness theorem of Rosser type \cite{Rosser}) Let $S$ be consistent. Then neither $A_{q}(\qqqq)$ nor the negation $\neg A_{q}(\qqqq)$ is provable in $S$.}
\end{thm}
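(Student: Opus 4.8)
The plan is to prove both non-provability claims by contradiction, in each case combining the consistency of $S$ with the numeralwise expressibility furnished by Theorem \ref{th7.7}. Throughout I read $g(\qqqq,b)$ as the formula asserting ``$b$ is the G\"odel number of a proof of $A_q(\qqqq)$'' and $h(\qqqq,c)$ as ``$c$ is the G\"odel number of a proof of $\neg A_q(\qqqq)$'', these being the formulae numeralwise expressing the meta-level predicates $\GGG(q,b)$ and $\HHH(q,c)$. I will use two standard provable properties of $\le$ in $S$: for each numeral $\nnn$, (a) $\vdash \forall c(c\le \nnn \Rightarrow (c=\ooo\lor c=\llll\lor\cdots\lor c=\nnn))$, and (b) $\vdash\forall b(b\le \nnn \lor \nnn\le b)$. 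Both follow from the axioms A3--A4 by a finite case analysis.

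For the first half, suppose $\vdash A_q(\qqqq)$. Then some actual proof of $A_q(\qqqq)$ exists; let $p$ be its G\"odel number, so $\GGG(q,p)$ is true and hence $\vdash g(\qqqq,\ppp)$ by part i) of Definition \ref{df7.2}. Consistency together with $\vdash A_q(\qqqq)$ forces $\neg A_q(\qqqq)$ to be unprovable, so no number proves it; in particular $\HHH(q,c)$ is false for every $c\le p$, whence $\vdash \neg h(\qqqq,\lceil c\rceil)$ for each such $c$ by part ii). Collapsing these finitely many instances by means of fact (a) gives $\vdash \neg\exists c(c\le \ppp\land h(\qqqq,c))$. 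On the other hand, specializing the provable $A_q(\qqqq)$ to $b:=\ppp$ and applying modus ponens to $\vdash g(\qqqq,\ppp)$ yields $\vdash \exists c(c\le \ppp\land h(\qqqq,c))$. These two theorems violate the consistency of $S$, so $A_q(\qqqq)$ is not provable.

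For the second half, suppose $\vdash \neg A_q(\qqqq)$, and let $p$ now be the G\"odel number of a proof of $\neg A_q(\qqqq)$, so $\HHH(q,p)$ is true and $\vdash h(\qqqq,\ppp)$. Consistency makes $A_q(\qqqq)$ unprovable, so $\GGG(q,b)$ is false for all $b$, giving $\vdash \neg g(\qqqq,\bbb)$ for every $b$; applying fact (a) to the instances $b\le p$ produces $\vdash \forall b(b\le \ppp \Rightarrow \neg g(\qqqq,b))$. I then argue inside $S$ for an arbitrary $b$, splitting on fact (b): if $b\le \ppp$ then $\neg g(\qqqq,b)$ holds and the implication $g(\qqqq,b)\Rightarrow\exists c(c\le b\land h(\qqqq,c))$ is vacuous; if $\ppp\le b$ then the witness $c:=\ppp$ satisfies $\ppp\le b$ together with $h(\qqqq,\ppp)$, so $\exists c(c\le b\land h(\qqqq,c))$ holds outright. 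Generalizing over $b$ gives $\vdash A_q(\qqqq)$, which contradicts $\vdash \neg A_q(\qqqq)$ by consistency. Hence $\neg A_q(\qqqq)$ is not provable either.

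The decisive feature — and the reason plain consistency suffices, in contrast with the $\omega$-consistency needed for G\"odel's original sentence — is Rosser's device of comparing the G\"odel numbers of competing proofs: in the second half the concrete proof of $\neg A_q(\qqqq)$ of number $p$ serves as a single bounded witness $c\le b$ for all $b\ge p$, while the residual range $b\le p$ is disposed of by finitely many decidable instances. The main technical obstacle is thus not the logical skeleton above but the careful justification of the two $\le$-facts (a) and (b) in $S$ together with the bounded-quantifier collapse, i.e. showing that the finitely many numeralwise verdicts $\vdash\neg h(\qqqq,\lceil c\rceil)$ (respectively $\vdash\neg g(\qqqq,\bbb)$) can be assembled into a single theorem universally quantified over the bounded domain. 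Granting these routine arithmetic lemmas, the two contradictions close the proof.
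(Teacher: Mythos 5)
Your proposal is correct and follows essentially the same Rosser argument as the paper: in the first half you contradict $\vdash A_q(\qqqq)$ by producing a proof-number witness for $g$ and refuting all bounded instances of $h$, and in the second half you use the order dichotomy $b\le\ppp$ versus $\ppp\le b$ with the concrete refutation-proof as the bounded witness, exactly as the paper does with $b<\lceil k\rceil$ versus $b\ge\lceil k\rceil$. The only cosmetic difference is that the paper assembles the first contradiction into the single theorem $\vdash\neg A_q(\qqqq)$ whereas you contradict the instance at $b=\ppp$ directly; you also helpfully make explicit the two provable $\le$-facts that the paper uses tacitly.
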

\begin{proof} Assume that $S$ is consistent.


Suppose that
\beq
\vdash A_{q}(\qqqq)\mbox{ in } S\label{(5)}
\ene
and let $e$ be the G\"odel number of a proof of $A_{q}(\qqqq)$.
Then by the numeralwise expressibility of $\GGG(a,b)$, we have
\beq
\vdash g(\qqqq,\eee)\label{Rosser1}.
\ene
As we have assumed that $S$ is consistent, 
$$
\vdash A_{q}(\qqqq)\mbox{ in } S
$$
yields
$$
\mbox{not }\vdash \neg A_{q}(\qqqq)\mbox{ in } S.
$$
Therefore for any non-negative integer $d$, $\HHH(q,d)$ is false. In particular $\HHH(q,0)$, $\cdots,$ $\HHH(q,e)$ are all false. Thus by the numeralwise expressibility of the predicate $\HHH(a,c)$, we have
$$
\vdash \neg h(\qqqq,\ooo),\ \vdash \neg h(\qqqq,\llll),\ \cdots,\ \vdash \neg h(\qqqq,\eee).
$$
Hence
$$
\vdash \forall c\hskip1pt (c\le \eee \Rightarrow \neg h(\qqqq,c)).
$$
This with $\vdash g(\qqqq,\eee)$ in \eq{Rosser1} gives
$$
\vdash \exists b 
\left( g(\qqqq,b)\hskip1pt\land \hskip0pt\forall c\hskip1pt(c\le b\Rightarrow\neg h(\qqqq,c))
\right).
$$
This is equivalent to
$$
\vdash \neg A_{q}(\qqqq)\mbox{ in } S.
$$
This and \eq{(5)} imply that $S$ is inconsistent, which contradicts our premise that $S$ is consistent. Therefore we have
$$
\mbox{not } \vdash A_{q}(\qqqq)\mbox{ in } S.
$$


On the other hand let us suppose that
\beq
\vdash \neg A_{q}(\qqqq)\mbox{ in } S.\label{(7)}
\ene
Then there is a G\"odel number $k$ of a proof of $\neg A_{q}(\qqqq)$ in $S$, and $\HHH(q,k)$ is true. Therefore by the numeralwise expressibility of $\HHH(a,c)$ we have
$$
\vdash h(\qqqq,\kkk).
$$
From this follows
\beq
\vdash \forall b\left(b\ge \kkk\Rightarrow \exists c\hskip1pt(c\le b\hskip1pt\land \hskip0pt  h(\qqqq,c))\right). \label{(8)}
\ene
As we have assumed that $\neg A_{q}(\qqqq)$ is provable in $S$, from our premise that $S$ is consistent, there is no proof of $A_{q}(\qqqq)$ in $S$. Thus
\beqs
\vdash \neg g(\qqqq,\ooo),\ \vdash \neg g(\qqqq,\llll),\ \cdots,\ \vdash \neg g(\qqqq,\kkk-\llll).
\enes
Therefore
\beq
\vdash \forall b\left(b<\kkk\Rightarrow \neg g(\qqqq,b)\right) .\nonumber
\ene
Combining this with \eq{(8)} yields
$$
\vdash \forall b\left( \neg g(\qqqq,b)\vee \exists c\hskip1pt(c\le b\hskip1pt\land \hskip0pt  h(\qqqq,c))\right).
$$
This is equivalent to
$$
\vdash A_{q}(\qqqq).
$$
This and \eq{(7)} imply that $S$ is inconsistent, which contradicts our premise that $S$ is consistent. Therefore we obtain
\beq
\mbox{not }\vdash \neg A_{q}(\qqqq)\mbox{ in } S.\nonumber
\ene
\end{proof}



It thus suffices to prove Theorem \ref{th7.7} in order to prove Theorem \ref{th7.9}. 


\section{Recursiveness}\label{chap:8}

\normalsize

\Large

\subsection{Recursive functions}\label{8.1}


\normalsize

Until now we have assumed that `recursive' means `inductive' in somewhat vague manner. In this section we define recursiveness rigorously. In this paper the functions whose domain and range are subsets of $\N$ are called the number-theoretic functions.

\begin{df}\label{df8.2}{\rm A function $\phi=\phi(x_1,\dots,x_n)$ is called a primitive recursive function if it is defined by repetitive applications of the following equations I) -- V). Here it is assumed that $n,m\ge1$ are integers, $i$ is the integer such that $1\le i\le n$, and $q$ is a natural number. Further $\psi,\chi,\chi_1,\dots,\chi_m$ are number-theoretic functions which have the indicated number of variables.
\begin{enumerate}
\item[   I)] $\phi(x)=x'$.
\item[  II)] $\phi(x_1,\dots,x_n)=q$.
\item[ III)] $\phi(x_1,\dots,x_n)=x_i$.
\item[  IV)] $\phi(x_1,\dots,x_n)=\psi(\chi_1(x_1,\dots,x_n),\dots,\chi_m(x_1,\dots,x_n))$.
\item[  V)] 
\begin{enumerate}
\item[ 1)]
When $n=1$

$\phi(0)=q$,\quad $\phi(k+1)=\chi(k,\phi(k))$.
\item[ 2)]When $n\ge 2$
\begin{enumerate}
\item[  i)] $\phi(0,x_2,\dots,x_n)=\psi(x_2,\dots,x_n)$.
\item[ ii)] $\phi(k+1,x_2,\dots,x_n)=\chi(k,\phi(k,x_2,\dots,x_n),x_2,\dots,x_n)$.
\end{enumerate}
\end{enumerate}
\end{enumerate}}
\end{df}

In set-theoretic number theory, natural number $n(\ge0)$ is defined as follows as stated before. First $0$ is defined as the empty set $\emptyset$, $1$ is defined as the successor $0\cup\{0\}$ of $0$, namely as the set $\{0\}$, $2$ is defined as the successor $\{0,1\}$ of $1$, and so on. A general natural number $n$ is thus defined similarly as $n=\{0,1,\dots,n-1\}$. Therefore if, for a natural number $n\ge0$, a number-theoretic function $F$ is defined for the natural numbers $0,1,\dots,n-1$ less than $n$, the function $F|n$ which is a restriction of $F$ to the domain $n=\{0,1,\dots,n-1\}\in \N$ is defined. In this case if some number-theoretic function $G$ is given, we can construct recursively a number-theoretic function $F$ whose domain is $\N$ by
$$
F(n)=G(n,F|n).
$$
In general the function constructed in this way is called a recursive function. It is easy to see that the functions defined in definition \ref{df8.2} can be written in this form.
\SP

\begin{df}\label{df8.3}{\rm A function $f: A \longrightarrow B$ whose domain is a subset of $A$ and range is a subset of $B$ is called a total function, if its domain $\DD(f)$ is equal to $A$. If otherwise the value $f(x)$ is not defined for an element $x$ of $A$, $f$ is called a partial function.}
\end{df}

\begin{df}\label{df8.4}{\rm For a number-theoretic function $\psi(x_1,\dots,x_n,y)$, we define the $\mu$-operator by
\beq
&\hskip-40pt&\mu y[\psi(x_1,\dots,x_n,y)=0]=y_0\nonumber\\
&\hskip-40pt\stackrel{\scriptsize\mbox{{\it def}}}\Leftrightarrow&
\psi(x_1,\dots,x_n,y_0)=0\wedge\nonumber (\forall y<y_0)[\psi(x_1,\dots,x_n,y)\ne 0].\label{mu}
\ene
}
\end{df}

From an intuitionistic viewpoint, $\mu$-operator is defined as follows. With calculating $\psi(x_1,$ $\dots,$ $x_n,$ $0)$, $\psi(x_1$, $\dots,x_n,$ $1)$, $\psi(x_1$, $\dots$, $x_n,$ $2)$, $\dots$, one searches for a natural number $y$ which satisfies $\psi(x_1,\dots,x_n,y)=0$. If he finds one such $y$, the first $y$ is $y_0=\mu y[\psi(x_1,\dots,x_n$, $y)=0]$. If otherwise there is no such $y$, this search will continue forever, and for $(x_1,\dots,x_n)$, the value $\mu y[\psi(x_1,\dots,x_n,y)=0]$ is not defined. Therefore a function defined by $\mu$-operator can be a partial function. The case we are interested in is that the function is a total function. In this case
\begin{enumerate}
\item[ VI)] $\phi(x_1,\dots,x_n)=\mu y[\psi(x_1,\dots,x_n,y)=0]$
\end{enumerate}
defines a new number-theoretic total function $\phi(x_1,\dots,x_n)$.

\begin{df}\label{df8.5}{\rm A number-theoretic total function $\phi$ is called a recursive function if there is a sequence of number-theoretic total functions $\phi_1,\phi_2,\dots,\phi_n$ such that the last function $\phi_n$ is $\phi$ and each function $\phi_k$ in the sequence is a function defined by I) -- III) or obtained by applying IV) -- VI) to the former functions $\phi_1,\phi_2,\dots,\phi_{k-1}$. The minimum length $n$ of such a sequence $\phi_1,\phi_2,\dots,\phi_n$ for a function $\phi=\phi_n$ is called the degree of the function $\phi$.
}
\end{df}


\Large

\subsection{Recursive relations}\label{8.2}


\normalsize

Using the recursive function defined in the previous subsection, we define recursive predicates or recursive relations.
\SP

\begin{df}\label{df8.6}{\rm A relation among natural numbers or a predicate about natural numbers $\RRR(x_1,\dots,x_n)$ is called recursive if there exists a recursive function $\phi(x_1$, $\dots$, $x_n)$ such that for any natural numbers $x_1,\dots,x_n$
\beq
\RRR(x_1,\dots,x_n)\Leftrightarrow [\phi(x_1,\dots,x_n)=0]
\ene
holds.}
\end{df}

Functions $x+y$, $x\cdot y$, $x^y$ are recursive, and the relation $x=y$ is also recursive.

\begin{df}\label{df8.7}{\rm For an expression $\EEE$, terms $t, t_1, t_2$ and a variable $x$, we define
\begin{enumerate}
\item[ 1)] $t_1\ne t_2\stackrel{\scriptsize\mbox{{\it def}}}=\neg (t_1=t_2)$.
\item[ 2)] $t_1\le t_2\stackrel{\scriptsize\mbox{{\it def}}}=\exists x(t_1+x=t_2)$.
\item[ 3)] $(\forall x\le t)\EEE\stackrel{\scriptsize\mbox{{\it def}}}=\forall x(x\le t\Rightarrow \EEE)$,

$(\exists x\le t)\EEE\stackrel{\scriptsize\mbox{{\it def}}}=\neg (\forall x\le t)\neg \EEE$.
\item[ 4)] $t_1<t_2\stackrel{\scriptsize\mbox{{\it def}}}=(t_1\le t_2)\wedge (t_1\ne t_2)$.
\item[ 5)] $(\forall x< t)\EEE\stackrel{\scriptsize\mbox{{\it def}}}=\forall x(x< t\Rightarrow \EEE)$,

$(\exists x< t)\EEE\stackrel{\scriptsize\mbox{{\it def}}}=\neg (\forall x<t)\neg \EEE$.\end{enumerate}}
\end{df}

If an expression $\EEE$ defines a recursive predicate, the relations in the definition \ref{df8.7} define recursive relations. In fact, the following is known (S\"atze II - IV in G\"odel \cite{[G]}). 

\begin{thm}\label{th8.8}{\sl 
\begin{enumerate}
\item[ (1)] If the predicates $\RRR$ and $\SSSS$ are recursive, the predicates $\neg \RRR$, $\RRR\wedge\SSSS$, $\RRR\lor\SSSS$, $\RRR\Rightarrow\SSSS$ are also recursive.
\item[ (2)] If functions $\phi(x_1,\dots,x_n)$ and $\psi(x_1,\dots,x_n)$ are recursive, then the relation or predicate $\phi(x_1$, $\dots$, $x_n)$ $=$ $\psi(x_1$, $\dots$, $x_n)$ is also recursive.
\item[ (3)] If a function $\phi(x_1,\dots,x_n)$ and a predicate $\RRR(u,y_1,\dots,y_m)$ are recursive, the predicates $\SSSS$, $\TTTT$ and the function $\psi$ defined below are also recursive.
\begin{enumerate}
\item[   i)] $\SSSS(x_1,\dots,x_n,y_1,\dots,y_m)$

$\stackrel{\scriptsize\mbox{{\it def}}}=(\exists u)\left[\left(u\le \phi(x_1,\dots,x_n)\right)\wedge\RRR(u,y_1,\dots,y_m)\right]$,
\item[  ii)] $\TTTT(x_1,\dots,x_n,y_1,\dots,y_m)$

$\stackrel{\scriptsize\mbox{{\it def}}}=(\forall u)\left[\left(u\le\phi(x_1,\dots,x_n)\right)\Rightarrow\RRR(u,y_1,\dots,y_m)\right]$,
\item[ iii)] $\psi(x_1,\dots,x_n,y_1,\dots,y_m)$

$\stackrel{\scriptsize\mbox{{\it def}}}=\hskip-2pt\mu u\left[\left(u\le \phi(x_1,\dots,x_n)\right)\wedge\RRR(u,y_1,\dots,y_m)\right]$.

Here if there is no least natural number $u$ which satisfies the condition in the parentheses $[\ \ ]$, we define the right hand side to be 0\footnote{The condition in $[\ \ ]$ on the right hand side is determined by searching for a finite number of natural numbers $u$. Thus this function $\psi$ is a total function.}.
\end{enumerate}
\end{enumerate}}
\end{thm}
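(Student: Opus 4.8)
The plan is to exploit Definition \ref{df8.6}: every recursive predicate $\RRR$ comes with a recursive \emph{representing function} $\phi$ that vanishes exactly when $\RRR$ holds, and to realize each logical connective and each bounded quantifier as an explicit arithmetic combination of such representing functions. First I would record the elementary primitive recursive auxiliaries obtained from a single instance of scheme V) in Definition \ref{df8.2}: the signum $\sgn$ (with $\sgn(0)=0$ and $\sgn(x')=1$), its complement $\overline{\sgn}$ (with $\overline{\sgn}(0)=1$ and $\overline{\sgn}(x')=0$), and truncated subtraction $x\mathbin{\dot{-}}y$. Each is recursive, and compositions of recursive functions are recursive by scheme IV).

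For part (1), if $\phi$ and $\psi$ represent $\RRR$ and $\SSSS$, then $\overline{\sgn}(\phi)$ represents $\neg\RRR$ (it vanishes precisely when $\phi\ne0$), the sum $\phi+\psi$ represents $\RRR\wedge\SSSS$ (a sum of naturals vanishes iff both summands do), and the product $\phi\cdot\psi$ represents $\RRR\lor\SSSS$ (a product vanishes iff some factor does). Since $\RRR\Rightarrow\SSSS$ is $\neg\RRR\lor\SSSS$, it is represented by $\overline{\sgn}(\phi)\cdot\psi$. Each of these is a composition of recursive functions, hence recursive, which settles (1). For part (2), the function $(\phi\mathbin{\dot{-}}\psi)+(\psi\mathbin{\dot{-}}\phi)$ is recursive by composition and vanishes exactly when $\phi=\psi$, so it represents the relation $\phi=\psi$.

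For part (3) I would first establish that bounded sums and bounded products of a recursive function are recursive: given recursive $f(u,\vec y)$, the functions $\Sigma(z,\vec y)=\sum_{u=0}^{z}f(u,\vec y)$ and $\Pi(z,\vec y)=\prod_{u=0}^{z}f(u,\vec y)$ each satisfy scheme V) in the first argument and so are recursive. Writing $r$ for the representing function of $\RRR$, the product $\prod_{u=0}^{\phi(\vec x)}r(u,\vec y)$ vanishes iff some $\RRR(u,\vec y)$ with $u\le\phi(\vec x)$ holds, hence represents $\SSSS$; dually $\sum_{u=0}^{\phi(\vec x)}r(u,\vec y)$ vanishes iff every such $\RRR(u,\vec y)$ holds, hence represents $\TTTT$. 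Both are recursive by composition with $\phi$.

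The main obstacle is the bounded $\mu$-operator in (3iii), both for totality and for the stipulated value $0$ when no witness exists. I would handle it by the classical counting formula
\[
\psi(\vec x,\vec y)=\sum_{v=0}^{\phi(\vec x)}\ \prod_{u=0}^{v}\sgn\bigl(r(u,\vec y)\bigr),
\]
whose inner product equals $1$ as long as no witness has appeared among $0,\dots,v$ and $0$ thereafter, so that the outer sum equals the index of the first witness when one exists and equals $\phi(\vec x)+1$ otherwise; this is recursive by the bounded-sum and bounded-product results and composition. To enforce the convention that the value is $0$ when no $u\le\phi(\vec x)$ satisfies $\RRR$, I would reuse the representing function $P(\vec x,\vec y)=\prod_{u=0}^{\phi(\vec x)}r(u,\vec y)$ of $\SSSS$ from (3i): the corrected definition $\overline{\sgn}\bigl(P(\vec x,\vec y)\bigr)\cdot\psi(\vec x,\vec y)$ returns the first witness when one exists and $0$ otherwise, and remains recursive. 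This patching step, rather than any single computation, is where the care is needed.
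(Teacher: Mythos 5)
Your proof is correct and complete. The paper itself gives no proof of this theorem --- it is stated as known, with a citation to G\"odel's S\"atze II--IV --- and your argument via representing functions, $\sgn$, $\overline{\sgn}$, truncated subtraction, bounded sums and products, and the counting formula for the bounded $\mu$-operator (with the final $\overline{\sgn}(P)\cdot\psi$ correction to enforce the value $0$ when no witness exists) is exactly the standard argument of G\"odel and Kleene that the citation points to.
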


\section{Numeralwise expression of proof}\label{chap:9}

\normalsize


In the next section we will show that the predicates $\GGG(a,b)$, $\HHH(a,b)$ defined in section \ref{chap:7} are numeralwise expressible in number theory $S$. To show this G\"odel \cite{[G]} proved the following theorem, and used the fact that the predicates $\GGG(a,b)$, $\HHH(a,b)$ are recursive.

\begin{thm}\label{th9.1}{\sl For any recursive relation $\RRR(x_1,\dots,x_n)$ there exists a number-theoretic formula $r(u_1,\dots,u_n)$ with $n$ free variables $u_1,\dots,u_n$ such that for any $n$-tuple of natural numbers $x_1,\dots,x_n$ the following i) and ii) hold.
\begin{enumerate}
\item[ i)] If $\RRR(x_1,\dots,x_n)$ is true, then $\vdash r(\xxxa,\dots,\xxxn)$holds.
\item[ ii)] If $\RRR(x_1,\dots,x_n)$ is false, then $\vdash \neg\hskip1pt r(\xxxa,\dots,\xxxn)$ holds.
\end{enumerate}}
\end{thm}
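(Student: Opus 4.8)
The plan is to reduce the statement to the numeralwise representability of recursive \emph{functions} and then to induct on the way such a function is built up. By Definition \ref{df8.6} the relation $\RRR$ comes with a recursive function $\phi$ satisfying $\RRR(x_1,\dots,x_n)\Leftrightarrow[\phi(x_1,\dots,x_n)=0]$, so it suffices to produce a formula that captures the \emph{graph} of $\phi$ strongly. Concretely, I would prove the auxiliary claim that for every recursive function $\phi(x_1,\dots,x_n)$ there is a formula $f(u_1,\dots,u_n,v)$ such that whenever $\phi(x_1,\dots,x_n)=y$ one has $\vdash f(\lceil x_1\rceil,\dots,\lceil x_n\rceil,\lceil y\rceil)$ together with the provable single-valuedness $\vdash\forall v\,(f(\lceil x_1\rceil,\dots,\lceil x_n\rceil,v)\Rightarrow v=\lceil y\rceil)$. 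Granting this, I set $r(u_1,\dots,u_n):=f(u_1,\dots,u_n,\ooo)$; then $\RRR(x_1,\dots,x_n)$ true gives $\phi(x_1,\dots,x_n)=0$ and hence $\vdash r(\lceil x_1\rceil,\dots,\lceil x_n\rceil)$, while $\RRR(x_1,\dots,x_n)$ false gives $\phi(x_1,\dots,x_n)=y$ with $y\ne0$, whence the single-valuedness together with $\vdash\neg(\lceil y\rceil=\ooo)$ yields $\vdash\neg r(\lceil x_1\rceil,\dots,\lceil x_n\rceil)$, which is exactly i) and ii).

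The auxiliary claim would be proved by induction on the degree of $\phi$ in the sense of Definition \ref{df8.5}, i.e. on the length of a defining sequence $\phi_1,\dots,\phi_n=\phi$ built from the schemes I)--VI). The base functions are immediate: the successor I) is represented by $v=(u_1)'$, a constant II) by $v=\lceil q\rceil$, and a projection III) by $v=u_i$. For composition IV), if $\psi$ is represented by $p$ and each $\chi_j$ by $c_j$, then $\phi$ is represented by
\[
f(u_1,\dots,u_n,v):=\exists w_1\cdots\exists w_m\Big(c_1(u_1,\dots,u_n,w_1)\wedge\cdots\wedge c_m(u_1,\dots,u_n,w_m)\wedge p(w_1,\dots,w_m,v)\Big),
\]
the existential witnesses being the numerals for the intermediate values. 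For the $\mu$-operator VI), writing $p(u_1,\dots,u_n,y,w)$ for the representing formula of the total function $\psi(x_1,\dots,x_n,y)$, the minimization is represented by $p(u_1,\dots,u_n,v,\ooo)\wedge(\forall y<v)\neg p(u_1,\dots,u_n,y,\ooo)$, the bounded quantifier being the one introduced in Definition \ref{df8.7}.

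The substantial case is primitive recursion V). Here the value $\phi(k,x_2,\dots,x_n)$ depends on the entire history $\phi(0,\dots),\dots,\phi(k-1,\dots)$, a finite sequence of unbounded length that cannot be quantified over directly. I would follow G\"odel and encode such sequences by a single number using the $\beta$-function $\beta(c,d,i)$ defined as the remainder of $c$ on division by $1+(i+1)d$. The key coding lemma, to be proved in $S$ from the Chinese Remainder Theorem, is that for any finite sequence $a_0,\dots,a_k$ there are $c,d$ with $\beta(c,d,i)=a_i$ for all $i\le k$; crucially $\beta$ is built from addition, multiplication and a remainder operation, each provably representable by a bounded formula. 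Taking $n=1$ for brevity and letting $b$ represent the graph of $\beta$ and $C$ the graph of $\chi$ from V), I would represent $\phi$ by
\[
f(u,v):=\exists c\,\exists d\Big(b(c,d,\ooo,\lceil q\rceil)\wedge(\forall i<u)\,\exists s\,\exists t\,\big(b(c,d,i,s)\wedge b(c,d,i',t)\wedge C(i,s,t)\big)\wedge b(c,d,u,v)\Big),
\]
the case $n\ge2$ being identical with the extra parameters $x_2,\dots,x_n$ carried along.

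Throughout, the passage from the \emph{truth} of these arithmetic statements to their \emph{provability} in $S$ rests on a stock of elementary provable facts about numerals --- that $\vdash\lceil m\rceil+\lceil n\rceil=\lceil m+n\rceil$ and $\vdash\lceil m\rceil\cdot\lceil n\rceil=\lceil m\cdot n\rceil$, that distinct numerals are provably unequal, and that $\vdash\forall x\,(x\le\lceil k\rceil\Rightarrow x=\ooo\lor\cdots\lor x=\lceil k\rceil)$ --- all of which follow from the axioms A3 and, for the universal and bounded-quantifier statements, the induction axiom A4. These lemmas discharge both the positive representability and the provable single-valuedness in each inductive step. The main obstacle is precisely the primitive-recursion case: one must check that the coding lemma for $\beta$ and the recurrence are not merely true but provable in $S$, which requires formalizing the Chinese Remainder Theorem and the divisibility and remainder arithmetic inside the system and invoking A4 to establish the bounded $(\forall i<u)$ induction uniformly in the numeral $\lceil k\rceil$. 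Once $\beta$ is in hand, composition and minimization are routine, and assembling the pieces delivers the formula $r$ demanded by the theorem.
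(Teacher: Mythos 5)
Your proposal is a correct sketch of the classical proof, but it is worth noting that the paper itself contains no proof of Theorem \ref{th9.1} at all: immediately after stating it, the author writes that the theorem will not be proved, and instead the specific predicates $\GGG(a,b)$ and $\HHH(a,b)$ are shown to be numeralwise expressible by writing out explicit formulae (the clauses {\bf 1}--{\bf 34} of sections \ref{chap:9} and \ref{chap:10}). So the two routes are genuinely different. The paper sidesteps the general representability theorem for recursive relations by hand-coding the two predicates actually needed for the Rosser argument; this avoids the induction on the build-up of a recursive function and, in particular, avoids the $\beta$-function entirely, because the only unbounded recursion that arises (the graph of $y=2^x$) is handled by an ad hoc sequence-coding trick with the reserved symbols $2^{18}$, $2^{19}$ in clauses {\bf 31}--{\bf 32}. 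Your route proves the stronger, general statement by reducing a recursive relation to the strong representability of the graph of its characteristic function $\phi$, inducting on the degree of $\phi$ through schemes I)--VI), and using G\"odel's $\beta$-function for the primitive-recursion case; this is the standard argument of G\"odel and Kleene and is what one would need if Theorem \ref{th9.1} were to be used as stated. One small economy you could make: for numeralwise expressibility you only need each \emph{numerical instance} of the $\beta$-coding lemma to be provable (a concrete, verifiable fact about specific numerals), so you do not actually have to formalize the Chinese Remainder Theorem inside $S$ in full generality --- though with the induction axiom A4 available that stronger formalization is also possible, so this is a matter of efficiency rather than correctness. The provable single-valuedness clauses, the numeral arithmetic lemmas, and the bounded-quantifier exhaustion $\vdash\forall x\,(x\le\lceil k\rceil\Rightarrow x=\ooo\lor\cdots\lor x=\lceil k\rceil)$ that you invoke are exactly the right supporting stock, and each is provable from A3 and A4.
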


In this paper we do not prove this theorem. Instead we will prove directly that the predicates $\GGG(a,b)$, $\HHH(a,b)$ are numeralwise expressed by some formulae $g(a,b)$, $h(a,b)$, respectively.


\subsection{Numeralwise expression of being terms and formulae}\label{9.1}


\normalsize

In this section we will give a numeralwise expression of the predicate ``A given expression $E_x$ is a proof." We recall the correspondence of G\"odel numbers to primitive symbols.
\beqs
\hskip-22pt&&\begin{array}{cccccccccc}
{}^\prime  &
  0 &
 ( &
 ) &
 \{ &
 \} &
 [ &
 ] &
 + &
 \cdot \\
2^{0}  & 2^{1} & 2^{2}  & 2^{3} & 2^{4} & 2^{5} & 2^{6}  & 2^{7}  & 2^{8} & 2^{9}
\end{array}\\
\hskip-22pt&&\begin{array}{cccccccccc}
 = &
 \Rightarrow &
 \wedge &
 \lor &
 \neg &
 \forall &
 \exists &
 ,  \\
 2^{10} & 2^{11} & 2^{12} & 2^{13} & 2^{14} & 2^{15} & 2^{16} & 2^{17}
\end{array}
\enes

The following procedure is a variant of the procedure stated in \cite{Kitada-book}.

First of all we will show that it is possible to express the procedure of constructing G\"odel number in a recursive way. For this purpose it suffices to prove that, for any given natural numbers $x,y,z$, it is possible to express the fact that $z$ is equal to the product $x\star y$ by a proposition in the number theory $S$ in a recursive way. Let us recall, as stated in section \ref{chap:8}, that the functions $x+y$, $x\cdot y$, $x^y$ are recursive functions and the relation $x=y$ is a recursive relation. By definition \ref{df8.7} and theorem \ref{th8.8}, the following definitions {\bf 1} -- {\bf 28} are all recursive.

\begin{description}
\item[1.] Div$(x,y)$ : $x$ is a factor of $y$.
\begin{equation*}
(\exists z \leq y ) \left( x\cdot z =y \right) 
\end{equation*}
\item[2.] $2^{\times }(x)$ : $x$ is a power of $2$.
\begin{equation*}
(\forall z \leq x ) \bigl( \left( \mbox{Div}(z,x)    \wedge (z\neq 1) \right) \Rightarrow \mbox{Div}(2,z)  \bigr) 
\end{equation*}
\item[3.] $y=2^{\ell(x)}$ : $y$ is the least power of $2$ which is greater than $x$.
\begin{eqnarray*}
&&\hskip-20pt\left(2^{\times } (y) \wedge (y > x) \wedge (y>1) \right)\wedge(\forall z < y ) \neg \left( 2^{\times } (z)  \land (z>x) \land (z>1)  \right) 
\end{eqnarray*}
\item[4.] $z=x\star y $ : $z$ is the numeral resulting from the $\star$-product of $x$ and $y$.
\begin{equation*}
(\exists w \leq z )
(z = \left( w\cdot x\right) + y \land w=2^{\ell(y)})
\end{equation*}
\end{description}

We next decompose numerals expressed in binary numbers, and express the procedure to extract a subsequence from a sequence of primitive symbols in number-theoretic way.
\begin{description}
\item[5.] Begin$(x,y)$ : $x$ is the numeral which expresses a left-most part of the sequence of symbols which has G\"odel number $y$.
\begin{equation*}
x=y \lor \left(  x\neq 0 \land 
(\exists z \leq y ) \left( x\star z=y \right) 
\right) 
\end{equation*}
\item[6.] End$(x,y)$ : $x$ is the numeral which expresses a right-most part of the sequence of symbols which has G\"odel number $y$.
\begin{equation*}
x=y \lor \bigl(  x\neq 0 \land (\exists z \leq y ) \left( z\star x=y \right) \bigr) 
\end{equation*}
\item[7.] Part$(x,y)$ : $x$ is the numeral which expresses a part of the sequence of symbols which has G\"odel number $y$.
\begin{equation*}
\hskip-18ptx=y \lor \bigl(  x\neq 0 \land (\exists z \leq y ) \left( \mbox{End}(z,y) \wedge \mbox{Begin}(x,z) \right) \bigr) 
\end{equation*}
\end{description}
Using these, we can construct a predicate which classifies the nature of terms.
\begin{description}
\item[8.] Succ$(x)$ : $E_x$ is a sequence of ${}^\prime $.
\begin{equation*}
(x\neq 0) \land (\forall y \leq x ) \left( \mbox{Part}(y,x) \Rightarrow \mbox{Part}(1,y)
\right) 
\end{equation*}
\item[9.] Var$(x)$ : $E_x$ is a variable.
\begin{equation*}
(\exists y \leq x ) \left( \mbox{Succ}(y) \wedge x=2^2\star 2^1\star y \star 2^3 \right) 
\end{equation*}
Her we recall that we follow the convention stated in subsection \ref{7.2} such that the variables $a,b,c,\dots$ are supposed to be expressed as $(0')$, $(0^{\prime\prime})$, $(0^{\prime\prime\prime})$, $\dots$.

\item[10.] Num$(x)$ : $E_x$ is a numeral.
\begin{equation*}
(x=2^1) \lor (\exists y \leq x ) \left( \mbox{Succ}(y) \wedge x=2^1\star y  \right) 
\end{equation*}
\end{description}

G\"odel number of a sequence of (formal) expressions $E_{x_1}$, $E_{x_2}$, $\dots$, $E_{x_n}$ is written as follows.
\begin{eqnarray*}
x_1 \star 2^{17} \star  x_2 \star  2^{17} \star  ...\star
2^{17} \star   x_n 
\end{eqnarray*}
The facts that an expression is a sequence of formal expressions and that an expression is included in a sequence of expressions are expressed by the following propositions.
\begin{description}
\item[11.] Seq$(x)$ : $E_x$ is a sequence of formal expressions.
\begin{equation*}
\mbox{Part}( 2^{17} , x) 
\end{equation*}
\item[12.] $x\in y $ : $E_y$ is a sequence of expressions, and $E_x$ is an element of it.
\beqs
&&\hskip-60pt\mbox{Seq}(y)  \wedge \neg \mbox{Part}(2^{17}, x)\wedge\\
&&\hskip-29pt\biggl( \mbox{Begin}( x \star 2^{17} , y) \lor \mbox{End}( 2^{17}  \star x, y)\lor\mbox{Part}( 2^{17}\star x \star 2^{17} , y)\biggr)
\enes
\item[13.] $x \prec_z y$ : For two sequences $E_x$, $E_y$ of expressions which are elements of a sequence $E_z$ of expressions, $E_x$ appears before $E_y$.
\begin{equation*}
\hskip-34pt(x\in z) \land (y\in z) \land
(\exists w \leq z) \mbox{Part}( x \star w \star y , z)
\end{equation*}
\end{description}
Using those, the fact that an expression which has G\"odel number $x$ is a formula is expressed in the formal system $S$.
\begin{description}
\item[14.] Term$(x)$ : $E_x$ is a term.
\begin{eqnarray*}
&\hskip-40pt & \exists y \biggl( ( x\in y)  \land  (\forall z\in y) \bigl\{\mbox{Var}(z)  \lor \mbox{Num}(z)  \lor 
\\
&\hskip-40pt & (\exists v \prec_y z )  (\exists w \prec_y z ) \bigl[
(2^2 \star v \star 2^3 \star 2^8 \star 2^2 \star w \star 2^3 = z )  \lor
\\
&\hskip-40pt &  (2^2 \star v \star 2^3 \star 2^9 \star 2^2 \star  w \star 2^3  = z ) \lor (2^2 \star v \star 2^3 \star 2^0 = z )
\bigr] \bigr\} \biggr)
\end{eqnarray*}
\item[15.] Atom$(x)$ : $E_x$ is an atomic formula.
\begin{eqnarray*}
& \hskip-50pt & (\exists y \leq x ) (\exists z \leq x )  \biggl(  \mbox{Term}( y)  \land 
\mbox{Term}( z ) \land
\left(  (x= y \star 2^{10}  \star  z ) 
\lor (x= \mbox{leq}( y , z)  ) 
\right) \biggr)
\end{eqnarray*}
Here the function leq is defined as follows recursively.
\begin{enumerate}
\item neq$(x,y)$ is the following G\"odel number of the expression $E_x\neq E_y$.
\begin{eqnarray*}
2^{14}  \star 2^2  \star  x  \star 2^{10}  \star  y  \star 2^3
\end{eqnarray*}
\item leq$(x,y)$ is the following G\"odel number of the expression $E_x\leq E_y$.
\normalsize
\begin{eqnarray*}
& \hskip-40pt & 2^{14} \star 2^2 \star 2^{15} \star 2^2 \star 2^1 \star 2^0 \star 2^3 \star 2^2 \star
\\
& \hskip-40pt &  \mbox{neq}( x \star 2^{8} \star 2^2 \star 2^1 \star 2^0 \star 2^3, y) \star 2^3 \star 2^3
\end{eqnarray*}
\normalsize
\end{enumerate}
\item[16.]  Gen$(x,y)$ : For a variable $E_u$, $E_y$ is equal to $ \forall E_u (E_x)$.
\begin{eqnarray*}
(\exists u \leq y )\left( \mbox{Var}(u) \land
y=2^{15} \star u \star 2^2\star x \star 2^3
\right)
\end{eqnarray*}
\item[17.] Form$(x)$ : $E_x$ is a (well-formed) formula.
\begin{eqnarray*}
&\hskip-40pt & \exists y \biggl(  ( x\in y)  \land  (\forall z\in y) \big\{
\mbox{Atom}(z)  \lor 
\\
&\hskip-40pt & (\exists v \prec_y z)  (\exists w \prec_y z ) \bigl[( z= v\star 2^{11} \star w ) \lor
\left( z=2^{14} \star 2^2 \star v \star 2^3 \right)
\lor \mbox{Gen}(w,z) 
 \bigr]   \bigr\}  \biggr)
\end{eqnarray*}
Here we regard the logical symbols of propositional calculus consisting of only $\neg$ and $\Rightarrow$ with noting that logical symbols $\land$ and $\lor$ are expressed by using $\neg$ and $\Rightarrow$ as follows:
\beqs
&&\hskip-20ptA\land B\mbox{ is }\neg(A\Rightarrow\neg B),\\
&&\hskip-20ptA\lor B\mbox{ is }\neg A\Rightarrow B
\enes
As well we regard that the existential quantifier is expressed as follows with using the universal quantifier:
$$
\exists x F(x)\mbox{ is }\neg\forall x\neg  F(x)
$$
\end{description}


\subsection{Numeralwise expression of being axioms of propositional calculus}\label{9.2}


\normalsize

We next show that the fact that an expression with G\"odel number $x$ is an axiom of number theory is expressed by a formula in $S$. First we consider the axioms of propositional calculus.
\begin{description}
\item[18.] Pro$(x)$:  $E_x$ is an axiom of propositional calculus.
\begin{eqnarray*}
& \hskip-40pt &\mbox{Prop}_1(x)\lor \mbox{Prop}_2(x)\lor \mbox{Prop}_3(x)\lor \mbox{Prop}_4(x)\lor
 \mbox{Prop}_5(x)\lor\mbox{Prop}_6(x)\lor \\
& \hskip-40pt &
\mbox{Prop}_7(x)\lor \mbox{Prop}_8(x)\lor
\mbox{Prop}_9(x)\lor \mbox{Prop}_{10}(x)\lor \mbox{Prop}_{11}(x)
\end{eqnarray*}
Here $\mbox{Prop}_1(x)$, $\mbox{Prop}_2(x)$, $\mbox{Prop}_3(x)$, $\mbox{Prop}_4(x)$, $\mbox{Prop}_5(x)$, $\mbox{Prop}_6(x)$, \linebreak$\mbox{Prop}_7(x)$, $\mbox{Prop}_8(x)$, $\mbox{Prop}_9(x)$, $\mbox{Prop}_{10}(x)$, $\mbox{Prop}_{11}(x)$ are defined as follows.

\begin{enumerate}
\item $\mbox{Prop}_1(x)$ : $E_x$ is axiom 1 of propositional calculus.
\normalsize
\begin{eqnarray*}
&\hskip-40pt& (\exists a<x) (\exists b<x) (
\mbox{Form}(a) \land \mbox{Form}(b) \land x =   a \star  2^{11}\star 2^2 \star  b \star 2^{11}\star  a \star 2^3 )
\end{eqnarray*}
\normalsize
\item $\mbox{Prop}_2(x)$ : $E_x$ is axiom 2 of propositional calculus.
\normalsize
\begin{eqnarray*}
&&\hskip-66pt  (\exists a<x) (\exists b<x) (\exists c<x) (
\mbox{Form}(a) \land \mbox{Form}(b)\land\\
&&\hskip-30pt \mbox{Form}(c) \land
  x = 2^2\star  a \star 2^{11}\star b\star 2^3\star
2^{11}\star 2^2\star 2^2\star a\star 2^{11}\\
&&\hskip-30pt\star 2^2\star b\star 2^{11}\star c\star 2^3\star 2^3\star 2^{11}\star 2^2\star a\star 2^{11}\star c\star 2^3\star 2^3)
\end{eqnarray*}
\normalsize
\item $\mbox{Prop}_3(x)$ : $E_x$ is axiom 3 of propositional calculus.
\normalsize
\begin{eqnarray*}
&&\hskip-80pt(\exists a<x) (\exists b<x) (
\mbox{Form}(a) \land \mbox{Form}(b) \land 
\\
&&\hskip-40pt
x =  a \star 2^{11}\star 2^2\star2^2\star a\star 2^{11}\star b \star 2^3\star 2^{11}\star b\star 2^3)
\end{eqnarray*}
\normalsize
\item $\mbox{Prop}_4(x)$ : $E_x$ is axiom 4 of propositional calculus.
\normalsize
\begin{eqnarray*}
&&\hskip-100pt (\exists a<x) (\exists b<x) (\mbox{Form}(a) \land \mbox{Form}(b) \land 
\\
&&\hskip-40pt
x = a \star 2^{11}\star 2^2\star b\star 2^{11}\star a\star 2^{12}\star b\star 2^3)
\end{eqnarray*}
\normalsize
\item $\mbox{Prop}_5(x)$ : $E_x$ is axiom 5 of propositional calculus.
\normalsize
\begin{eqnarray*}
&&\hskip-32pt (\exists a<x) (\exists b<x)  (\exists c<x)(
\mbox{Form}(a) \land \mbox{Form}(b)\wedge x =   a \star 2^{12}\star b\star 2^{11}\star  a)
\end{eqnarray*}
\normalsize
\item $\mbox{Prop}_6(x)$ : $E_x$ is axiom 6 of propositional calculus.
\normalsize
\begin{eqnarray*}
&\hskip-32pt& (\exists a<x) (\exists b<x)  (\exists c<x)(
\mbox{Form}(a) \land \mbox{Form}(b)\wedge x =   a \star 2^{12}\star b\star 2^{11}\star b)
\end{eqnarray*}
\normalsize
\item $\mbox{Prop}_7(x)$ : $E_x$ is axiom 7 of propositional calculus.
\normalsize
\begin{eqnarray*}
&\hskip-40pt& (\exists a<x) (\exists b<x)(
\mbox{Form}(a) \land\mbox{Form}(b)\land x = a \star 2^{11}\star a\star 2^{13}\star b)
\end{eqnarray*}
\normalsize
\item $\mbox{Prop}_8(x)$ : $E_x$ is axiom 8 of propositional calculus.
\normalsize
\begin{eqnarray*}
&\hskip-40pt& (\exists a<x) (\exists b<x) (
\mbox{Form}(a) \land\mbox{Form}(b)\land x = b \star 2^{11}\star a\star 2^{13}\star b)
\end{eqnarray*}
\normalsize
\item $\mbox{Prop}_9(x)$ : $E_x$ is axiom 9 of propositional calculus.
\normalsize
\begin{eqnarray*}
&&\hskip-26pt (\exists a<x) (\exists b<x) (\exists c<x) (
\mbox{Form}(a) \land\mbox{Form}(b)\land \mbox{Form}(c)\land\\
&&\hskip-10pt x = 2^2\star a\star 2^{11} \star c\star 2^3\star 2^{11}\star\ 2^2\star 2^2\star b\star 2^{11}\star c\star 2^3\star 2^{11}\star 2^2\\
&&\hskip105pt \star\ a\star 2^{13}\star b\star 2^{11}\star c\star 2^3\star 2^3)
\end{eqnarray*}
\normalsize
\item $\mbox{Prop}_{10}(x)$ : $E_x$ is axiom 10 of propositional calculus.
\normalsize
\begin{eqnarray*}
&&\hskip-50pt (\exists a<x) (\exists b<x) (
\mbox{Form}(a) \land\mbox{Form}(b)\land\\
&&\hskip-20pt x = 2^2\star a\star 2^{11} \star b\star 2^3\star 2^{11}\star 2^2\star 2^2\star a\star 2^{11}\star 2^{14}\star\\
&&\hskip20pt b\star 2^3\star 2^{11}\star 2^{14}\star a\star 2^3)
\end{eqnarray*}
\normalsize
\item $\mbox{Prop}_{11}(x)$ : $E_x$ is axiom 11 of propositional calculus.
\normalsize
\begin{eqnarray*}
&&\hskip-70pt (\exists a<x)  (
\mbox{Form}(a) \land x = 2^{14}\star 2^{14}\star a\star 2^{11} \star a)
\end{eqnarray*}
\normalsize
\end{enumerate}
\end{description}


\Large

\subsection{Numeralwise expression of being axioms of predicate calculus}\label{9.3}


\normalsize

We now express in $S$ that an expression $E_x$ is an axiom of predicate calculus. As is easily seen, axiom 1 and axiom 4 are equivalent, and axiom 2 and axiom 3 are equivalent. Thus we have only to give expressions to axioms 1 and 2. In axiom 2, we need to consider the replacement of all occurrences of a free variable in a formula by a term.
\begin{description}
\item[19.] $\mbox{Free}(x,y)$ : Every variable in a term $E_x$ is not bounded in an expression $E_y$.
\begin{eqnarray*}
&&\hskip-40pt \mbox{Term}(x) \land (\forall z < x )\biggl(
\left[\mbox{Var}(z) \land  \mbox{Part}(z,x) \right] \Rightarrow
\left[ \neg \mbox{Part}(2^{15}\star z,y) \right]
\biggr)
\end{eqnarray*}
\item[20.] $\mbox{Pred}_1(x)$ : $E_x$ is axiom 1 of predicate calculus.
\begin{eqnarray*}
&\hskip-45pt& (\exists a < x ) (\exists b < x )  (\exists c < x )
\bigl(\mbox{Form}(a) \land \mbox{Form}(b) \land \mbox{Var}(c) \land \\
&\hskip-45pt&  (\neg \mbox{Part}(c,b))\land
 x= 2^2\star b \star 2^{11} \star a \star 2^3\star
2^{11}\star 2^2\star b  
\star 2^{11} \star 2^2 \star 2^{15}\star c\star a \star 2^3\star 2^3\bigr)
\end{eqnarray*}
\item[21.] $\mbox{Seq}(x,y,u)$ : An expression $u$ includes a pair $\{E_x,E_y\}$ as a consecutive pair of $E_x$ and $E_y$ in this order.
\begin{eqnarray*}
&&\hskip-60pt\neg \mbox{Seq}(x)\land \neg \mbox{Seq}(y)  \land(x\ne0)\land(y\ne0)\land\mbox{Part}(x \star 2^{17}\star y,u)
\end{eqnarray*}
\item[22.] $x=\mbox{alt}_y(u,t)$ : A formula $E_x$ is obtained from a formula $E_y$ by substituting a free term $E_t$ at every occurrence of a free variable $E_u$.
\begin{eqnarray*}
&& \hskip-40pt  \mbox{Form}(x) \land \mbox{Form}(y) \land \mbox{Var}(u)\land
\mbox{Free}(u,y)\land\mbox{Term}(t)\land\mbox{Free}(t,y)\land\mbox{Part}(u, y)\land\\
&& \hskip-40pt  \neg \mbox{Part}(u, x)\land\exists w \biggl\{\mbox{Seq}(y,x,w)  \land 
 (\forall a <w )(\forall b <w )\biggl(
\mbox{Seq}(a,b,w) \\
&& \hskip-0pt \Rightarrow
\bigl\{(\neg\mbox{Part}(u, a) \land a=b)\lor
 (\exists c_1 < a)(\exists c_2 < b) (\exists d_1 < a)(\exists d_2 < b)\\
&& \hskip-0pt \bigl[ \mbox{Seq}(c_1,c_2,w) \land 
\mbox{Seq}(d_1,d_2,w)\land  a = c_1 \star u \star d_1 \land  b = c_2 \star t \star d_2\bigr]
\bigr\}\biggr)\biggr\}
\end{eqnarray*}
\item[23.] $\mbox{Pred}_2(x)$ : $E_x$ is axiom 2 of predicate calculus.
\normalsize
\begin{eqnarray*}
&& \hskip-40pt  (\exists a < x ) (\exists b < x )  (\exists c < x )(\exists t < x )
(\mbox{Form}(a) \land \\
&& \hskip-0pt \mbox{Var}(b) \land
 \mbox{Term}(t) \land
 c=\mbox{alt}_a(b,t) \land x= 2^{15}\star b\star a\star 2^{11}\star c)
\end{eqnarray*}
\normalsize
\end{description}

\Large

\subsection{Numeralwise expression of being axioms of number theory}\label{9.4}


\normalsize

Finally we express that $E_x$ is an axiom of number theory.

\begin{description}
\item[24.] $\mbox{Nat}(x)$: $E_x$ is an axiom of number theory.
\begin{eqnarray*}
&\hskip-40pt&\mbox{Nat}_1(x)\hskip-1.5pt\lor\hskip-1.5pt \mbox{Nat}_2(x)\hskip-1.5pt\lor\hskip-1.5pt \mbox{Nat}_3(x)\hskip-1.5pt\lor\hskip-1.5pt \mbox{Nat}_4(x)\hskip-1.5pt\lor\hskip-1.5pt \mbox{Nat}_5(x)\hskip-1.5pt\lor\hskip-1.5pt
 \mbox{Nat}_6(x)\hskip-1.5pt\lor\hskip-1.5pt \mbox{Nat}_7(x)\hskip-1.5pt\lor\hskip-1.5pt \mbox{Nat}_8(x)
\end{eqnarray*}
Here $\mbox{Nat}_1(x)$, $\mbox{Nat}_2(x)$, $\mbox{Nat}_3(x)$, $\mbox{Nat}_4(x)$, $\mbox{Nat}_5(x)$, $\mbox{Nat}_6(x)$, $\mbox{Nat}_7(x)$, $\mbox{Nat}_8(x)$ are defined as follows.
\begin{enumerate}
\item $\mbox{Nat}_1(x)$ : $E_x$ is axiom 1 of number theory.
\begin{eqnarray*}
&&\hskip-40pt (\exists a < x )(\exists b < x )
(\mbox{Term}(a) \land \mbox{Term}(b) \land \\
&&\hskip-20pt x=2^2 \star a \star 2^0\star 2^{10}
\star b \star 2^0\star 2^3\star 2^{11}\star 2^2  \star a \star 2^{10} \star b \star 2^3)
\end{eqnarray*}
\item $\mbox{Nat}_2(x)$ : $E_x$ is axiom 2 of number theory.
\begin{eqnarray*}
&\hskip-40pt&  (\exists a < x )
(\mbox{Term}(a) \land   x=2^{14}\star 2^2 \star a \star
2^0\star 2^{10}\star 2^1\star 2^3)
\end{eqnarray*}
\item $\mbox{Nat}_3(x)$ : $E_x$ is axiom 3 of number theory.
\begin{eqnarray*}
&& \hskip-40pt  (\exists a < x )(\exists b < x )(\exists c < x )
(\mbox{Term}(a) \land \mbox{Term}(b) \land \mbox{Term}(c) \land\\
&& \hskip-20pt 
 x= a \star 2^{10}\star b \star 2^{11} \star 2^2\star a \star 2^{10} \star c \star 2^{11}
\star b \star 2^{10}  \star c \star 2^3)
\end{eqnarray*}
\item $\mbox{Nat}_4(x)$ : $E_x$ is axiom 4 of number theory.
\begin{eqnarray*}
&& \hskip-75pt  (\exists a < x )(\exists b < x )
(\mbox{Term}(a) \land \mbox{Term}(b) \land \\
&& \hskip-20pt  x= a \star 2^{10}\star b \star 2^{11}  \star a \star 2^0\star 2^{10} \star b \star 2^0)
\end{eqnarray*}
\item $\mbox{Nat}_5(x)$ : $E_x$ is axiom 5 of number theory.
\begin{eqnarray*}
&& \hskip-110pt  (\exists a < x )
(\mbox{Term}(a) \land  x=a \star 2^8\star2^1\star2^{10}
\star a)
\end{eqnarray*}
\item $\mbox{Nat}_6(x)$ : $E_x$ is axiom 6 of number theory.
\begin{eqnarray*}
&& \hskip-60pt  (\exists a < x )(\exists b < x )
(\mbox{Term}(a) \land \mbox{Term}(b)\land\\
&& \hskip-20pt  x=a \star 2^8 \star b\star 2^0\star 2^{10}\star 2^2\star a\star 2^8\star b\star 2^3\star 2^0)
\end{eqnarray*}
\item $\mbox{Nat}_7(x)$ : $E_x$ is axiom 7 of number theory.
\begin{eqnarray*}
&& \hskip-110pt  (\exists a < x )
(\mbox{Term}(a) \land x=a \star 2^9 \star 2^1\star 2^{10}\star 2^1)
\end{eqnarray*}
\item $\mbox{Nat}_8(x)$ : $E_x$ is axiom 8 of number theory.
\begin{eqnarray*}
&& \hskip-70pt  (\exists a < x )(\exists b < x )
(\mbox{Term}(a) \land \mbox{Term}(b) \land \\
&& \hskip-20pt  x= a \star 2^9\star b \star 2^0\star 2^{10} \star a \star
 2^9\star  b \star 2^{8}\star a)
\end{eqnarray*}
\end{enumerate}
\end{description}

That $E_x$ is the axiom of mathematical induction is expressed as follows.
\begin{description}
\item[25.]  $\mbox{sub}_a(x,y)$ is the G\"odel number of the formula $\forall E_x((E_x$ $= E_y) \Rightarrow (E_a))$ meaning the formal substitution of $E_y$ into the variable $E_x$ of $E_a$.
\begin{eqnarray*}
& \hskip-40pt & 2^{15} \star x \star 2^2\star2^2 \star x  \star 2^{10} \star y  \star
2^3\star2^{11}\star2^2 \star a \star 2^3\star2^3
\end{eqnarray*}
\item[26.] $\mbox{MI}(x)$ : $E_x$ is the axiom of mathematical induction.
\normalsize
\begin{eqnarray*}
&& \hskip-40pt  (\exists a < x ) (\exists b < x )(\exists c < x )
\bigl(\mbox{Form}(a) \land \mbox{Var}(b)  \land \mbox{Var}(c)  \land  \\
&& \hskip-20pt 
x=2^2 \star \mbox{sub}_a(b,2^1) \star
2^{12}\star 2^{15}
\star c \star\ 2^2 \star\mbox{sub}_a(b,c) \star 2^{11} \star \mbox{sub}_a(b,c\star 2^0)\\
&& \hskip-20pt 
\star 2^3\star 2^3\star 2^{11}\star\ 2^{15}\star c \star
 \mbox{sub}_a(b,c)\bigr)
\end{eqnarray*}
\normalsize
\end{description}

We have expressed all axioms of $S$ in the formal system $S$.

\Large

\subsection{Numeralwise expression of being a proof sequence}\label{9.5}


\normalsize

From the above, we can express in the formal system $S$ the fact that a sequence of expressions is a proof sequence consisting of axioms and the results of the applications of rules of inference as follows.
\begin{description}
\item[27.] $\mbox{Axiom}(x)$ : $E_x$ is an axiom.
\begin{eqnarray*}
&\hskip-40pt&\mbox{Pro}(x)\lor \mbox{Pred}_1(x)\lor \mbox{Pred}_2(x)\lor \mbox{Nat}(x)\lor \mbox{MI}(x)
\end{eqnarray*}
\item[28.]  $\mbox{Proof}(x)$ : $E_x$ is a proof sequence.
\begin{eqnarray*}
& \hskip-40pt &\mbox{Seq}(x) \land
\forall y \biggl( y \in x  \Rightarrow \bigl( \mbox{Axiom}(y) \lor (\exists v \prec_x y )  (\exists w \prec_x y ) 
\bigl\{(w = v\star 2^{11} \star y )  \lor \\
& \hskip-40pt &
 (\exists a < v )  (\exists b < v )  (\exists c < y ) \bigl[ v = b \star 2^{11} \star a \land
 y = 
 b \star 2^{11} \star c \land 
\mbox{Gen}(a,c)\land \\
& \hskip-40pt & (\forall z\le a)(\mbox{Var}(z) \Rightarrow \neg \mbox{Part}(z,b))\bigr]\bigr\}
 \bigr)\biggr)
\end{eqnarray*}
\item[29.]  $\mbox{Pr}(x)$ : $E_x$ is provable.
\begin{eqnarray*}
& \hskip-60pt &\exists y \left( \mbox{Proof}(y) \land (x\in y ) \right)
\end{eqnarray*}
\item[30.]  $\mbox{Re}(x)$ : $E_x$ is refutable.
\begin{eqnarray*}
& \hskip-60pt &\exists y \left( 
\mbox{Proof}( y )\land (2^{14} \star 2^2 \star x \star 2^3\in y ) \right)
\end{eqnarray*}
\end{description}

The predicates in {\bf 29} and {\bf 30} are not recursive predicates in the above. Other predicates from {\bf 1} to {\bf 28} are recursive because the latter predicates are all determined to be true or not by making checkings for natural numbers in a finite set. In {\bf 29}, {\bf 30}, there is no restriction to a finite set of natural numbers. Thus in a finitary method one can not determine whether the predicates $\mbox{Pr}(x)$ and $\mbox{Re}(x)$ are true or not.


In the next section, using those descriptions we will show that the predicates $\GGG(a,b)$, $\HHH(a,b)$ are numeralwise expressible.


\section{G\"odel predicate}\label{chap:10}

\normalsize


In this section, using the predicates constructed in the previous section, we will see that the predicates $\GGG(a,b)$, $\HHH(a,b)$ are numeralwise expressed by some formulae $g(a,b)$, $h(a,b)$ in the formal number theory $S$.

\Large

\subsection{Numeralwise expression of G\"odel predicate}\label{10.1}


\normalsize

As stated in section \ref{chap:7}, the predicates $\GGG(a,b)$, $\HHH(a,b)$ which are used in constructing Rosser formula are defined as follows. In this paper we call the predicate $\GGG(a,b)$ G\"odel predicate.

\begin{df}\label{df10.1} {\rm
\begin{enumerate}
\item[ 1)] The predicate $\GGG(a,b)$ means the following.

``A formula $A_a$ with G\"odel number $a$ has just one free variable $x$, and an expression $E_b$ with G\"odel number $b$ is a proof of the formula $A_a=A_a(\aaa)$ obtained from $A_a=A_a(x)$ by substituting $\aaa$ into $x$."
\item[ 2)] The predicate $\HHH(a,b)$ means the following.

``A formula $A_a$ with G\"odel number $a$ has just one free variable $x$, and an expression $E_b$ with G\"odel number $b$ is a proof of the formula $\neg A_a=\neg A_a(\aaa)$ obtained from $\neg A_a=\neg A_a(x)$ by substituting $\aaa$ into $x$."
\end{enumerate}}
\end{df}

The predicates $\GGG(a,b)$, $\HHH(a,b)$ are numeralwise expressible by formulae $g(a,b)$, $h(a,b)$ respectively if the following holds.
\begin{enumerate}
\item[ 1)] 
\begin{enumerate}
\item[ i)] If $\GGG(a,b)$ is true, then $\vdash g(\aaa,\bbb)$ holds.
\item[ ii)] If $\GGG(a,b)$ is false, then $\vdash \neg\hskip1pt g(\aaa,\bbb)$ holds.
\end{enumerate}
\item[ 2)] 
\begin{enumerate}
\item[ i)] If $\HHH(a,b)$ is true, then $\vdash h(\aaa,\bbb)$ holds.
\item[ ii)] If $\HHH(a,b)$ is false, then $\vdash \neg\hskip1pt h(\aaa,\bbb)$ holds.
\end{enumerate}
\end{enumerate}


From the procedures {\bf 1} -- {\bf 28} stated in the previous section follows the next theorem.

\begin{thm}\label{th10.2}{\sl By the G\"odel numbering we have defined for $S$, the predicates $\GGG(a,b)$, $\HHH(a,b)$ in definition \ref{df10.1} are numeralwise expressed by some formulae $g(a,b)$, $h(a,b)$ in $S$.}
\end{thm}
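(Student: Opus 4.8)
The plan is to reduce the assertion to two facts: that $\GGG(a,b)$ and $\HHH(a,b)$ are \emph{recursive} predicates, and that the formulas assembled in the procedures {\bf 1}--{\bf 28} of the previous section numeralwise express their intended recursive predicates in $S$. The decisive observation is that, in contrast to $\mbox{Pr}(x)$ and $\mbox{Re}(x)$ of {\bf 29} and {\bf 30}, which carry an unbounded search $\exists y$ for a proof, both arguments $a$ and $b$ of $\GGG$ and $\HHH$ are supplied. Hence verifying ``$E_b$ is a proof of $A_a(\aaa)$'' is a bounded computation: one checks $\mbox{Form}(a)$, that $A_a$ has exactly one free variable $x$, that $\mbox{Proof}(b)$ holds, and that the concluding formula of the sequence $E_b$ carries the G\"odel number of $A_a(\aaa)$. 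Every ingredient is drawn from the recursive predicates {\bf 1}--{\bf 28} together with the substitution code $\mbox{sub}_a$ of {\bf 25}, and all quantifier ranges occurring are bounded; so by theorem \ref{th8.8} both $\GGG(a,b)$ and $\HHH(a,b)$ are recursive.

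The second half shows that the formula attached to each of {\bf 1}--{\bf 28} numeralwise expresses its predicate, by structural induction on the way these formulas are built. For the base cases one uses that $x+y=z$, $x\cdot y=z$ and $x=y$ are numeralwise expressed in $S$, which is decided for numeral arguments directly from the number-theoretic axioms A3 (with A4 where needed). In the inductive step, numeralwise expressibility is preserved under the propositional connectives $\neg,\wedge,\lor,\Rightarrow$ and, crucially, under the \emph{bounded} quantifiers $(\forall x\le t)$ and $(\exists x\le t)$ of definition \ref{df8.7}. The bounded case is the heart of the argument: when the bounding term is a numeral the quantified formula unfolds into a finite conjunction (respectively disjunction) over $x=\ooo,\dots,x=t$, each member of which is provable or refutable by the inductive hypothesis, so the whole is provable or refutable in $S$. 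This is exactly the portion of theorem \ref{th9.1} that is needed here, specialized to the formulas at hand, so that no separate appeal to theorem \ref{th9.1} is required.

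To construct $g(a,b)$ and $h(a,b)$ I would conjoin $\mbox{Form}(a)$, the formula expressing ``$A_a$ has exactly one free variable $x$'' (built from $\mbox{Var}$ of {\bf 9}, $\mbox{Part}$ of {\bf 7}, and the free-variable analysis underlying $\mbox{Free}$ in {\bf 19}), the predicate $\mbox{Proof}(b)$ of {\bf 28}, and a clause asserting that the final formula of $E_b$ has the G\"odel number of $A_a(\aaa)$, respectively $\neg A_a(\aaa)$. That last G\"odel number is produced from $a$ by applying $\mbox{sub}_a$ of {\bf 25} to the code of the variable $x$ and the code $2^1\star w$, $w'=2^a$, of the numeral $\aaa$ furnished by Lemma \ref{lem7.5}. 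With $g(a,b)$ and $h(a,b)$ so defined, the four requirements 1)\,i)--ii) and 2)\,i)--ii) follow from the numeralwise-expression statement of the preceding paragraph.

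The main obstacle I anticipate lies in the bounded-quantifier preservation step together with the recursive treatment of the self-substitution $a\mapsto A_a(\aaa)$. One must check carefully that forming the numeral $\aaa$ and substituting it for $x$ --- the operation that in spirit lets $\GGG$ speak of the formula indexed by its own argument --- remains inside the bounded, recursive world; it is precisely Lemma \ref{lem7.5} that keeps the G\"odel number of $\aaa$ under recursive control as $a$ varies. The remainder is a routine though lengthy verification that the explicit G\"odel-number identities written out in {\bf 1}--{\bf 28} correctly mirror the intended syntactic operations.
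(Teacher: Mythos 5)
Your overall strategy is sound, and your final construction of $g(a,b)$ and $h(a,b)$ --- conjoining $\mbox{Form}(a)$, a one-free-variable clause, $\mbox{Proof}(b)$, and a clause saying that $\mbox{sub}_a$ applied to the code of the variable and the code of the numeral $\aaa$ occurs in $E_b$ --- is essentially the paper's (its items {\bf 33} and {\bf 34}). But there is one genuine gap. You write the code of $\aaa$ as $2^1\star w$ with $w'=2^a$, citing Lemma \ref{lem7.5}; that lemma, however, only reduces the problem to expressing the relation $w'=2^a$ \emph{inside} $S$, and $S$ has no exponentiation symbol. The entire technical content of the paper's proof is precisely this step: it defines $y=2^x$ in $S$ by coding the computation sequence $(0,1),(1,2),(2,4),\dots$ into a single number using two marker codes $2^{18}$ and $2^{19}$ that are not G\"odel numbers of any primitive symbol (its items {\bf 31} and {\bf 32}). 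Your sketch never supplies such a device, and your structural induction over the formulas of {\bf 1}--{\bf 28} does not cover it, since $y=2^x$ is not among them; nor can you fall back on Theorem \ref{th9.1}, which the paper explicitly declines to prove and which you yourself set aside. Without this, the clause $w'=2^a$ in your $g(a,b)$ is not a formula of $S$ at all.

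Beyond that gap, your route differs from the paper's in two ways worth noting. First, you open by establishing that $\GGG(a,b)$ and $\HHH(a,b)$ are recursive; the paper deliberately avoids using recursiveness in this proof (see the footnote in subsection \ref{11.3}), because it later reapplies the same construction to predicates $\GGG^{(\omega)}(a,b)$ that are not recursive, so the direct exhibit-the-formula approach buys a generality that the recursiveness detour would forfeit. Second, you supply something the paper omits: an actual verification of clauses i) and ii) of numeralwise expressibility, by induction on the structure of the formulas, the key case being the unfolding of bounded quantifiers with numeral bounds into finite conjunctions and disjunctions grounded in axioms A3 and A4. The paper stops at writing the formulas down, so on this point your proposal is more complete than the source --- though note that, as written, several of the predicates you rely on (e.g.\ Term, Form, alt) contain unbounded existential quantifiers over construction sequences, so your claim that ``all quantifier ranges occurring are bounded'' needs either explicit bounds on those witnesses or a separate argument.
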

\begin{proof} By definition \ref{df10.1}, these predicates $\GGG(a,b)$, $\HHH(a,b)$ contain a diagonal formula like $A_a(\aaa)$ as in the phrase of $\GGG(a,b)$: ``an expression $E_b$ with G\"odel number $b$ is a proof of the formula $A_a=A_a(\aaa)$ obtained from $A_a=A_a(x)$ by substituting $\aaa$ into $x$," which results from a substitution of itself into itself through G\"odel numbering. To treat these we need, as we will see, to show that the formula $y=2^x$ is number-theoretic for an expression $E_x$ and a natural number $y$. To do so, we have only to show that the pair $(x,y)$ of natural numbers $x,y$ in $y=2^x$ is included in a concrete calculation sequence $(0,1)$, $(1,2)$, $(2,4)$, $(3,8)$, $(4,16)$, $\dots$. However, as $E_x$ can be an expression which includes a sequence of expressions, we cannot use commas in the construction of this calculation sequence. Therefore we need to use numbers $s,t$ which are not G\"odel numbers of any terms, formulae, sequences in the formal system $S$, and need to express the number corresponding to the calculation sequence like $s$ $\star$ $0$ $\star$  $t$ $\star$  $2^0$ $\star$ $s$ $\star$ $2^0$  $\star$ $t$ $\star$ $2^1$  $\star$ $s$ $\star$ $2^1$ $\star$  $t$ $\star$ $2^2$ $\star$  $s$ $\star$  $2^1+2^0$ $\star$  $t$ $\star$  $2^3$ $\star$  $s$ $\star$ $2^2$ $\star$ $t$ $\star$ $2^4$ $\star$ $\dots$. Here we use the assignments $s=2^{18}$, $t=2^{19}$, and will express $y=2^x$ as follows.
\begin{description}
\item[31.] $\mbox{SEQ}(x,y,w)$ : $w$ is a sequence of pairs $(n,m)$ of natural numbers, which includes the pair $(x,y)$.
\begin{eqnarray*}
&& \hskip-40pt \mbox{Part}(2^{18}\star x \star 2^{19}\star y \star 2^{18}, w)\land  \neg \mbox{Part}(2^{18}, x)\land\\
&& \hskip80pt  \neg \mbox{Part}(2^{18}, y)\land \neg \mbox{Part}(2^{19}, x)\land \neg \mbox{Part}(2^{19}, y)
\end{eqnarray*}
\item[32.]  $y=2^x$ : For an expression $E_x$ and a natural number $y$, $y=2^x$ holds.
\begin{eqnarray*}
&& \hskip-40pt  \exists w \biggl(\mbox{SEQ}(x,y,w)
\land (\forall a \leq w) (\forall b \leq w)  \bigl[\mbox{SEQ}(a,b,w)
 \hskip-2pt\Rightarrow\hskip-2pt\\
&& \hskip-40pt
\bigl\{(a=0 \land b=1)
\lor (\exists c\leq a)  (\exists d\leq b) 
[\mbox{SEQ}(c,d,w) \land (a=c+1) \land (b=d\cdot 2)]\bigr\}\bigr]\biggr)
\end{eqnarray*}
\end{description}

Here we need lemma \ref{lem7.5} stated in section \ref{chap:7}, and we restate it as follows.

\begin{lem}\label{lem10.3}{\sl Let $a\ge 0$ be a natural number and let $w$ be the natural number such that $w'=2^a$, where $w'$ is a successor of the natural number $w$ (i.e. $w'=w+1$). Then we have
\beq
g(a)=2^1\star w\label{2-sin-2}.
\ene}
\end{lem}

By this lemma, for a natural number $w$ such that $w'=2^a$, $2^1\star w$ gives the G\"odel number $g(a)$ of the numeral $0^{\prime \prime ...\prime }$ (the number of primes being $a$) corresponding to the natural number $a$. Therefore, the predicates $\GGG(a,b)$ and $\HHH(a,b)$ are numeralwise expressed in the formal system $S$ by the following formulae $g(a,b)$ and $h(a,b)$ respectively.
\begin{description}
\item[33.] $g(a,b)$ : $E_a$ has a free variable $E_x$, and $E_b$ is a proof of the formula $E_a$ when $E_x=a$.
\normalsize
\begin{eqnarray*}
& \hskip-40pt &\exists x \bigl(\mbox{Var}(x) \land \mbox{Part}(x,a) \land \mbox{Free}(x,a) \land \mbox{Proof}(b) \land\\
& \hskip-40pt & \exists w
[w^\prime =2^a \land (\mbox{sub}_a(x,2^1\star w) \in b)]\bigr)
\end{eqnarray*}
\normalsize
\item[34.] $h(a,b)$ : $E_a$ has a free variable $E_x$, and $E_b$ is a proof of $\neg E_a$ when $E_x=a$.
\normalsize
\begin{eqnarray*}
& \hskip-40pt &\exists x \bigl(\mbox{Var}(x) \land \mbox{Part}(x,a) \land \mbox{Free}(x,a) \land \mbox{Proof}(b) \land\\
& \hskip-40pt & \exists w
[w^\prime =2^a \land (2^{14} \star 2^2 \star \mbox{sub}_{a}(x,2^1\star w) \star 2^3\in b )] \bigr)
\end{eqnarray*}
\normalsize
\end{description}
\end{proof}



\Large

\subsection{G\"odel's incompleteness theorem}\label{10.2}


\normalsize

The Rosser formula $A_q(\qqqq)$ is defined as follows.

\begin{df}\label{df10.4}{\rm
Let $q$ be the G\"odel number of the following formula.
$$
\hskip-40pt\forall b  \left( 
g(a,b)\Rightarrow \exists c\hskip1pt(c\le b \hskip1pt\land \hskip0pt h(a,c))
\right) .
$$
Namely
\beq
A_{q}(a) = \forall b  \left(g(a,b)\Rightarrow  \exists c\hskip1pt(c\le b \hskip1pt\land \hskip0pt h(a,c))
\right). \nonumber
\ene
Then the following  formula is called Rosser formula.
\beqs
A_{q}(\qqqq)= \forall b  \left(g(\qqqq,b)\Rightarrow \exists c\hskip1pt(c\le b \hskip1pt\land \hskip0pt h(\qqqq,c))
\right).
\enes
Here
\beqs
&&\hskip-28ptg(\qqqq,b)=\forall a \left(a=q\Rightarrow g(a,b)\right),\\
&&\hskip-28pth(\qqqq,c)=\forall a \left(a=q\Rightarrow h(a,c)\right).
\enes}
\end{df}

The results in the previous subsection have shown theorem \ref{th7.7} in section \ref{chap:7}. Therefore we have the incompleteness theorem of Rosser type by theorem \ref{th7.9} in section \ref{chap:7}.
\SP

On the other hand, G\"odel's original result is as follows.

\begin{df}\label{df10.5}{\rm
Let $p$ be the G\"odel number of the following formula.
$$
\hskip-40pt\forall b \hskip2pt \neg \hskip2pt g(a,b).
$$
Namely
\beqs
A_{p}(a) = \forall b \hskip2pt \neg \hskip2pt g(a,b).
\enes
Then we call the following formula G\"odel sentence or formula.
\beq
A_{p}(\ppp)=\forall b\hskip2pt\neg\hskip2pt g(\ppp,b).\label{godel-bun}
\ene
where
\beqs
&&\hskip-28ptg(\ppp,b)=\forall a \left(a=p\Rightarrow g(a,b)\right).
\enes}
\end{df}

\begin{df}\label{df10.6}{\rm A formal system $S$ which includes the number theory is called $\omega$-consistent if for any variable $x$ and any formula $A(x)$, not all of
$$
A(0),\ A(1),\ A(2),\ \dots\quad\mbox{and}\quad \neg\forall x A(x)
$$
is provable. In particular, if $S$ is $\omega$-consistent, it is (simply) consistent.}
\end{df}

\begin{thm}\label{th10.7}{\sl (G\"odel's incompleteness theorem (1931))  If  the number theory $S$ is consistent, then
$$
\mbox{{\rm not}}\ \vdash A_p(\ppp).
$$
If $S$ is $\omega$-consistent,
$$
\mbox{{\rm not}}\ \vdash \neg A_p(\ppp).
$$
In particular if $S$ is $\omega$-consistent, then $A_p(\ppp)$ is neither provable nor refutable in $S$.}
\end{thm}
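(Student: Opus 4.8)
The plan is to run the self-referential argument underlying Theorem \ref{th7.9}, but now in the simpler, asymmetric form that Gödel's original sentence forces. The fact we rely on, already established in Theorem \ref{th10.2}, is that $g(a,b)$ numeralwise expresses $\GGG(a,b)$. Because $p$ is by construction the Gödel number of $A_p(a)=\forall b\,\neg\,g(a,b)$, the diagonal sentence $A_p(\ppp)=\forall b\,\neg\,g(\ppp,b)$ states precisely ``for no $b$ does $\GGG(p,b)$ hold,'' that is, ``$A_p(\ppp)$ has no proof.'' Everything then reduces to unwinding this self-reference against the two consistency hypotheses.

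For the first claim I would argue by contradiction from simple consistency. Assume $\vdash A_p(\ppp)$ and let $e$ be the Gödel number of one of its proofs. Then $\GGG(p,e)$ is true, so by numeralwise expressibility (condition i) of Definition \ref{df10.1}) we obtain $\vdash g(\ppp,\eee)$. On the other hand, specializing the universally quantified hypothesis $\forall b\,\neg g(\ppp,b)$ at the closed numeral $\eee$ --- which is free for $b$ since it contains no variables --- gives $\vdash\neg g(\ppp,\eee)$ via the predicate-calculus axiom $\forall b\,\neg g(\ppp,b)\Rightarrow\neg g(\ppp,\eee)$ and modus ponens. The pair $\vdash g(\ppp,\eee)$ and $\vdash\neg g(\ppp,\eee)$ violates consistency, so not $\vdash A_p(\ppp)$.

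For the second claim I would assume $\vdash\neg A_p(\ppp)$ and use $\omega$-consistency. Since $\omega$-consistency entails simple consistency, the first claim already shows that $A_p(\ppp)$ is unprovable; hence no natural number $b$ codes a proof of $A_p(\ppp)$, so $\GGG(p,b)$ is false for every $b$. Condition ii) of Definition \ref{df10.1} then yields the entire infinite list $\vdash\neg g(\ppp,\ooo)$, $\vdash\neg g(\ppp,\llll)$, $\vdash\neg g(\ppp,\tttt),\dots$. Writing $A(b)$ for $\neg g(\ppp,b)$, these say $\vdash A(\nnn)$ for every $n$, while the standing assumption $\vdash\neg A_p(\ppp)$ is exactly $\vdash\neg\forall b\,A(b)$. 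Thus all of $A(0),A(1),A(2),\dots$ together with $\neg\forall b\,A(b)$ are provable, which is the configuration Definition \ref{df10.6} forbids for an $\omega$-consistent system. Hence not $\vdash\neg A_p(\ppp)$, and combining the two claims gives the ``in particular'' assertion.

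The only routine points are the logical moves: the single specialization instance in the first claim, and the bookkeeping that numerals are automatically free for the substituted variable; I do not expect these to cause trouble. The real work lies entirely in Theorem \ref{th10.2}, which I would regard as the genuine obstacle, since it is there that the diagonalization folded into $\GGG(p,b)$ must be checked to make $A_p(\ppp)$ self-referential in the exact sense used above. Conceptually the delicate feature is the asymmetry: simple consistency suffices to refute provability of $A_p(\ppp)$, but the refutability direction genuinely needs the stronger $\omega$-consistency --- precisely the gap that the Rosser formula of Theorem \ref{th7.9} was designed to close by restoring symmetry under simple consistency alone.
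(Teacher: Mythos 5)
Your proposal is correct and follows essentially the same route as the paper: derive $\vdash g(\ppp,\eee)$ from a hypothetical proof of $A_p(\ppp)$ via numeralwise expressibility to contradict simple consistency, then use the infinite list $\vdash \neg g(\ppp,\nnn)$ together with $\omega$-consistency for the second half. The only (immaterial) difference is in the first part, where the paper applies existential introduction to $g(\ppp,\eee)$ to obtain $\vdash\neg A_p(\ppp)$ and contradict $\vdash A_p(\ppp)$ directly, whereas you instantiate the universal quantifier of $A_p(\ppp)$ at $\eee$ to contradict $\vdash g(\ppp,\eee)$ --- two dual formulations of the same contradiction.
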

\begin{proof}
We assume that $S$ is consistent. Suppose that
\beq
\vdash A_p(\ppp)\label{goedel}
\ene
holds. Then there is a proof of $A_p(\ppp)$. Thus, if we let $b$ be the G\"odel number of the proof, $\GGG(p,b)$ is true. Therefore the numeralwise expressibility of the predicate $\GGG(a,b)$ implies
$$
\vdash g(\ppp,\bbb).
$$
From this and axiom 3 of predicate calculus we have
$$
\vdash \exists b g(\ppp,b).
$$
Namely
$$
\vdash \neg\forall b\neg g(\ppp,b).
$$
This means the following by the definition \eq{godel-bun} of G\"odel formula.
$$
\vdash \neg A_{p}(\ppp).
$$
This and \eq{goedel} show that $S$ is inconsistent. As we have made a premise that $S$ is consistent, \eq{goedel} is wrong. The former part is proved.
\MP

We assume that $S$ is $\omega$-consistent. In particular, $S$ is consistent. Thus by the result above, $A_p(\ppp)$ is not provable in $S$. Thus every natural number $0, 1, 2, \dots$ is not a G\"odel number of a proof of $A_p(\ppp)$. Namely, $\GGG(p,0)$, $\GGG(p,1)$, $\GGG(p,2)$, $\dots$ are all wrong. Therefore by the numeralwise expressibility of the predicate $\GGG(a,b)$, all of
$$
\vdash \neg g(\ppp,\ooo),\ \vdash \neg g(\ppp,\llll),\ \vdash \neg g(\ppp,\tttt),\ \dots
$$
hold. As we assume that $S$ is $\omega$-consistent, from this follows
$$
\mbox{not }\vdash \neg \forall b\neg g(\ppp,b).
$$
By the definition \eq{godel-bun} of G\"odel formula, this means
$$
\mbox{not }\vdash \neg A_{p}(\ppp).
$$
This proves the latter part of the theorem.
\end{proof}


\Large

\subsection{The second incompleteness theorem}\label{10.3}


\normalsize

The former part of G\"odel's theorem \ref{th10.7} is summarized as follows.
\beq
\mbox{\ $S$ is consistent }\Rightarrow\mbox{ $A_p(\ppp)$ is not provable.}\label{goedel-matome}
\ene
By \eq{godel-bun}, the fact that ``$A_p(\ppp)$ is not provable" on the right hand side is written by translating it by G\"odel numbering as follows:
\beq
A_{p}(\ppp)=\forall b\hskip2pt\neg\hskip2pt g(\ppp,b).\label{godel-bun-2}
\ene
Therefore if we translate and map the metamathematics by G\"odel numbering into $S$ and write the fact that ``$S$ is consistent" by a formal formula in $S$:
$$
\mbox{Consis}(S),
$$
then together with the formula \eq{godel-bun-2} we have from the first part of theorem \ref{th10.7} that
\beq
\vdash \mbox{Consis}(S)\Rightarrow A_{p}(\ppp)\label{dai-2-goedel}.
\ene

Now let us assume on the meta level
$$
\vdash \mbox{Consis}(S).
$$
Then together with the formula \eq{dai-2-goedel}, we have
$$
\vdash A_{p}(\ppp).
$$
This contradicts the first part of theorem \ref{th10.7}. Therefore we have the following theorem.

\begin{thm}\label{th10.8}{\sl (G\"odel's second incompleteness theorem (1931)) If the number theory $S$ is consistent, then
$$
\mbox{{\rm not} }\vdash \mbox{{\rm Consis}}(S)
$$
holds. Namely if $S$ is consistent, the consistency of $S$ is not proved by a method formalizable in $S$.}
\end{thm}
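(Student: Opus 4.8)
The plan is to derive the second theorem as a formalized corollary of the first part of Theorem~\ref{th10.7}, following the meta-level computation already sketched in \eqs{goedel-matome}{dai-2-goedel}. The first part of Theorem~\ref{th10.7} is the implication, read on the meta level, ``$S$ consistent $\Rightarrow$ $A_p(\ppp)$ is not provable.'' The decisive observation is that its conclusion, ``$A_p(\ppp)$ is not provable,'' is \emph{literally} the formula $A_p(\ppp)=\forall b\,\neg\,g(\ppp,b)$ of \eq{godel-bun-2} once it is translated through G\"odel numbering. Hence the assertion of the first theorem can itself be expressed as a statement about G\"odel numbers inside $S$, and the task is to show that $S$ actually proves that statement.

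Two ingredients are needed. First I would fix a formalization of the consistency statement: consistency asserts that no formula is simultaneously provable and refutable, so using the predicates $\mbox{Pr}(x)$ of {\bf 29} and $\mbox{Re}(x)$ of {\bf 30} it is naturally written as
$$
\mbox{Consis}(S)\equiv\forall x\,\neg\,(\mbox{Pr}(x)\wedge\mbox{Re}(x)).
$$
Second, and this is the heart of the matter, I would verify that the informal proof of the first part of Theorem~\ref{th10.7} is mirrored step for step by reasoning $S$ can carry out on G\"odel numbers, thereby obtaining the \emph{provable} implication \eq{dai-2-goedel}, namely $\vdash\mbox{Consis}(S)\Rightarrow A_p(\ppp)$. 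Concretely, from a proof of $A_p(\ppp)$ one extracts, by numeralwise expressibility of $\GGG$ (Theorem~\ref{th10.2}), a witness $b$ with $\vdash g(\ppp,\bbb)$, hence $\vdash\exists b\,g(\ppp,b)$, i.e. $\vdash\neg A_p(\ppp)$; so provability of $A_p(\ppp)$ would force its own refutation and violate consistency. The point is to re-read this chain not as a meta-argument but as an implication that $S$ itself derives.

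Granting \eq{dai-2-goedel}, the theorem closes in one line exactly as in the paragraph preceding its statement. Assume $S$ is consistent and suppose, for contradiction, that $\vdash\mbox{Consis}(S)$. Applying modus ponens ($I_1$) to \eq{dai-2-goedel} yields $\vdash A_p(\ppp)$, which contradicts the first part of Theorem~\ref{th10.7}. Therefore $\mbox{not }\vdash\mbox{Consis}(S)$, as claimed.

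The main obstacle is the derivation of \eq{dai-2-goedel}; this is precisely where the ``technical details'' alluded to in Section~\ref{chap:2} live. One must upgrade the numeralwise expressibility of Theorem~\ref{th10.2}, which is a statement about what is provable, into internally provable implications relating $\mbox{Pr}(x)$, $g(a,b)$ and $h(a,b)$ --- in effect establishing the Hilbert--Bernays derivability conditions for the chosen $\mbox{Pr}$. In particular the passage ``$A_p(\ppp)$ provable $\Rightarrow\exists b\,g(\ppp,b)$ provable'' must be turned into a single formula that $S$ proves, and checking that each mechanical step of Section~\ref{chap:9} admits such an internal soundness proof is the genuinely laborious part of the argument.
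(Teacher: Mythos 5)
Your proposal is correct and follows essentially the same route as the paper: both formalize the first part of Theorem \ref{th10.7} inside $S$ to obtain $\vdash \mbox{Consis}(S)\Rightarrow A_p(\ppp)$ as in \eq{dai-2-goedel}, then conclude by modus ponens and the unprovability of $A_p(\ppp)$. You are in fact somewhat more explicit than the paper about where the real work lies (a concrete choice of $\mbox{Consis}(S)$ via $\mbox{Pr}$ and $\mbox{Re}$, and the need for the Hilbert--Bernays derivability conditions), whereas the paper simply declares its argument an outline and defers the complete proof to Hilbert--Bernays (1939).
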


The proof above is an outline. A complete proof is given in Hilbert-Bernays (1939). We note that this theorem is proved without using Rosser's stronger result: theorem \ref{th7.9}.

\subsection{Implications of the second theorem}\label{10.4}

As stated at the beginning of section \ref{chap:2} and in subsection \ref{2.1}, the G\"odel's second incompleteness theorem is thought to have shown the impossibility of the Hilbert's formalism or program ``if one could show the consistency of the formal axiomatic system of classical mathematics from the finitary standpoint, it shows the soundness of the formal system treating infinities." This is because the second theorem is interpreted as meaning ``if a system $S$ is consistent, it is impossible to show the consistency of $S$ by a method having the power equivalent to that of $S$," and because the finitary method is thought equal to the ability of number theory $S$.

If the first and hence the second theorem is proved by a completely syntactic method, this interpretation is surely true. However as we have seen above, already in the proof of the first incompleteness theorem, a semantic interpretation is assumed as suggested at the end of subsection \ref{1.3}. Namely as mentioned at the beginning of section \ref{chap:7}, in the process of replacing the natural number $n$ by a numeral $\nnn$ of the formal system, a substitution is made with assuming an identification of the meta leveled theory and the object leveled theory. This is a self-reference stated in section \ref{chap:1}. As stated in subsection \ref{1.3}, if self-reference is not made, one would have almost nothing to speak. However, we have also seen in the same subsection \ref{1.3} that if one pursues the self-reference to an ultimate point, he will meet a contradiction. The proof of G\"odel's theorem was made by making a complete self-reference by embedding the discussion on the meta level (which is the subject who does mathematics) into the formal system (which is his own object theory). It is at a glance a syntactic argument, however it assumes the symmetry or reflexivity between the meta level and the object level. To assume the symmetry between the meta level and the object level in the case of number theory means that the discussion is not based merely on the syntactic treatment of words, but the meaning of the object theory of natural numbers is applied to the discussion of the meta level. This reflects the fact that the discussion on the meta level is influenced by the objects of the research activity. Here seems to appear the ordinary phenomenon that professionals or researchers are often influenced by the objects of their own job or study.

Thinking like this, it is not odd that the incomleteness theorem was found by mathematicians and computation scientists who study numbers. For logicians in the medieval age, their object of study would have been classical documents, so it is not surprising that they did not try to think by replacing things by numbers.

Probably behind the incompleteness theorem are hidden the things like these. It is then natural that the incompleteness theorem demands the validity of the theorem of the present or modern people who have no or almost no sense of incongruity with the thought of replacing their own thinking by numbers.

How about Cretan paradox or other self-referential paradoxes or the following Tarski's theorem\footnote{cf. section 6.1 of \cite{Kitada-book}.} arising from the interpretation of those paradoxes then? 
\begin{quote}
The set of true sentences of a language ${\cal L}$ is not referred to by a sentence inside the language ${\cal L}$. Namely the predicate $T$ showing the truthness must not be inside the language ${\cal L}$.
\end{quote}
These are about the ordinary language and are not related with numbers.

Probably this is also the problem in the same category. In either case of numbers or language one can do reflection and rumination only when symbolization of them has been made, and no problem as above arises without the ``objectification of oneself." The problem arises only when the subject of thinking on the meta level objectifies himself and makes himself the object of thinking. It is the conveniences produced by civilization, the letters, which are working here. The letters are indispensable tools for humans in forming the future plans as well as are troublesome tools which allow them self-reflections. In any case, it is necessary to make symbolization of language for humans' life, but there inevitably accompanies the self-reflections. Then necessarily one is always in the inconsistent world which G\"odel's incompleteness theorem presents. Namely without language or its symbolization, the human as a subject of thinking would have been free from the influence of the object world of the thinking. Once a symbolization of language however is made, the self-reflection occurs inevitably, and humans have come to carry the undecidable self-referential problems always.
\BP

Until now we have been reviewing the proof of G\"odel's incompleteness theorem. We now turn to a problem which arises accompanying the incompleteness theorem: What is consequent if one continues to add the undecidable proposition whose existence is proved by the incompleteness theorem, to the axioms of $S$? For example if one adds the proposition $A_q(\qqqq)=A_{q^{(0)}}(\qqqqo)$ which is undecidable in $S$ by theorem \ref{th7.9} to the axioms of $S$, one can repeat the argument of the incompleteness theorem and gets a new undecidable proposition $A_{q^{(1)}}(\qqqqa)$. In the same way one can produce infinite similar undecidable propositions $A_{q^{(n)}}(\qqqqn)$ $(n=0,1,2,\dots)$.  If then one adds all of these infinite propositions to the axioms of $S$, can he repeat the argument of G\"odel's incompleteness theorem? If he can, how about repeating the argument transfinitely infinite times? Right from the very beginning, the formal system we have been thinking in this article is able to express propositions of at most countable infinity. To such a system if we are able to add the uncountably infinite propositions as axioms, the system $S$ must have propositions of more than countable infinity. In metamathematics it seems to be generally assumed\footnote{cf. a later subsection \ref{11.5}.} that such a transfinite procedure is possible only up to the extent that the added axioms are countable. However insofar as we have seen, there seems to be no reason for it. If one can then make transfinite operations on the meta level, this is certainly inconsistent with the countability of the number of propositions of the system $S$.
\SP

In the following sections we will consider those problems.

\section{Mathematics is inconsistent?}\label{chap:11}

In this section we will consider the problem whether the set theory ZFC which is thought to be a basis of modern mathematics is consistent. As stated in section \ref{chap:2}, the discovery of Russell's set in 1903 gave an impact to the foundation of mathematics and became a source of contention. To deal with the situation, there were proposed Hilbert's formalism, and other standpoints. As stated the Hilbert's standpoint that ``if one can show the consistency of mathematics based on finitary standpoint, mathematics is sound" seems to have been answered negatively by the G\"odel's incompleteness theorem. In this section we will see that the G\"odel's theorem seems to imply that mathematics is contradictory.


\subsection{Incompleteness theorem of Rosser type, revisited}\label{11.1}

\normalsize


We now consider a formal set theory $S$ equivalent to ZFC, and assume that we can use the same set theory ZFC also on the meta level. We can develop a number theory in this formal system $S$. We denote this subsystem of number theory by $S^{(0)}$. Then the G\"odel predicate $\GGG^{(0)}(a,b)$ and the related predicate $\HHH^{(0)}(a,b)$ are defined as follows.

\begin{df}\label{df11.1} {\rm
\begin{enumerate}
\item[ 1)] The predicate $\GGG^{(0)}(a,b)$ means the following.

``A formula $A_a$ with G\"odel number $a$ has just one free variable $x$, and an expression $E_b$ with G\"odel number $b$ is a proof of the formula $A_a=A_a(\aaa)$ obtained from $A_a=A_a(x)$ by substituting $\aaa$ into $x$."
\item[ 2)] The predicate $\HHH^{(0)}(a,b)$ means the following.

``A formula $A_a$ with G\"odel number $a$ has just one free variable $x$, and an expression $E_b$ with G\"odel number $b$ is a proof of the formula $\neg A_a=\neg A_a(\aaa)$ obtained from $\neg A_a=\neg A_a(x)$ by substituting $\aaa$ into $x$."
\end{enumerate}}
\end{df}

For these predicates we have shown the following in the former sections.

\begin{thm}\label{th11.2}{\sl
By the G\"odel numbering we have stated before, the predicates $\GGG^{(0)}(a,b)$, $\HHH^{(0)}(a,b)$ in definition \ref{df11.1} are numeralwise expressed by corresponding formulae $g^{(0)}(a,b)$, $h^{(0)}(a,b)$ in $S^{(0)}$, therefore in the formal set theory $S$. Namely the following holds. Let the formulae $g^{(0)}(a,b)$ and $h^{(0)}(a,b)$ be defined as follows.
\begin{enumerate}
\item[ 1)] $g^{(0)}(a,b)$ : $E_a$ has a free variable $E_x$, and $E_b$ is a proof of the formula $E_a$ when $E_x=a$.
\normalsize
\begin{eqnarray*}
& \hskip-40pt &\exists x \bigl(\mbox{{\rm Var}}(x) \land \mbox{{\rm Part}}(x,a) \land \mbox{{\rm Proof}}(b) \land\\
& \hskip-40pt & \exists w
[w^\prime =2^a \land (\mbox{{\rm sub}}_a(x,2^1\star w) \in b)]\bigr)
\end{eqnarray*}
\normalsize
\item[ 2)] $h^{(0)}(a,b)$ : $E_a$ has a free variable $E_x$, and $E_b$ is a proof of $\neg E_a$ when $E_x=a$.
\normalsize
\begin{eqnarray*}
& \hskip-40pt &\exists x \bigl(\mbox{{\rm Var}}(x) \land \mbox{{\rm Part}}(x,a) \land \mbox{{\rm Proof}}(b) \land\\
& \hskip-40pt & \exists w
[w^\prime =2^a \land (2^{14} \star 2^2 \star \mbox{{\rm sub}}_{a}(x,2^1\star w) \star 2^3\in b )] \bigr)
\end{eqnarray*}
\normalsize
\end{enumerate}
Then the following holds.
\begin{enumerate}
\item[ (1)] 
\begin{enumerate}
\item[ i)] If $\GGG^{(0)}(a,b)$ is true, then $\vdash g^{(0)}(\aaa,\bbb)$ holds.\item[ ii)] If $\GGG^{(0)}(a,b)$ is false, then $\vdash \neg\hskip1pt g^{(0)}(\aaa,\bbb)$ holds.
\end{enumerate}
\item[ (2)] 
\begin{enumerate}
\item[ i)] If $\HHH^{(0)}(a,b)$ is true, then $\vdash h^{(0)}(\aaa,\bbb)$ holds.
\item[ ii)] If $\HHH^{(0)}(a,b)$ is false, then $\vdash \neg\hskip1pt h^{(0)}(\aaa,\bbb)$ holds.
\end{enumerate}
\end{enumerate}}
\end{thm}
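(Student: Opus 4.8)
The plan is to recognize that this statement is, up to notation, identical to Theorem \ref{th10.2}, and that its proof is obtained by transporting the entire construction of subsections \ref{9.1}--\ref{10.1} into the number-theoretic subsystem $S^{(0)}$ of the set theory $S$. The predicates $\GGG^{(0)}(a,b)$ and $\HHH^{(0)}(a,b)$ of Definition \ref{df11.1} are word-for-word the predicates $\GGG(a,b)$ and $\HHH(a,b)$ of Definition \ref{df10.1}, and the formulae $g^{(0)}(a,b)$, $h^{(0)}(a,b)$ are the formulae {\bf 33} and {\bf 34} carried over to $S^{(0)}$. Thus nothing new has to be invented; the work is to check that each link in the earlier chain still holds once the ambient theory is $S$ rather than the stand-alone formal number theory of Section \ref{chap:2}.

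First I would verify that the subsystem $S^{(0)}$ genuinely behaves as the number theory of Section \ref{chap:2}. Using the von Neumann encoding recalled earlier ($0$ as $\emptyset$ and $m'=m\cup\{m\}$), one checks that the axioms A1--A4 and the rules of inference $I_1$, $I_2$, $I_3$ are all available in $S^{(0)}$, so that provability inside $S^{(0)}$ coincides with provability in the formal number theory. Consequently the G\"odel numbering $g$, the $\star$-operation, and the assignment of powers of $2$ to primitive symbols carry over unchanged, as does Lemma \ref{lem10.3} relating $g(a)$ to the numeral $0^{\prime\dots\prime}$.

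Next I would re-run the definitions of the predicates {\bf 1}--{\bf 32}. Each is a statement about natural numbers built from $+$, $\cdot$, $x^y$, $=$ and the bounded operations of Definition \ref{df8.7}, so by Theorem \ref{th8.8} each is recursive; moreover, exactly as in Section \ref{chap:9}, the displayed formula is itself a formula of $S^{(0)}$ that numeralwise expresses the predicate, so that no appeal to the unproved Theorem \ref{th9.1} is needed. Assembling these as in the proof of Theorem \ref{th10.2} yields $g^{(0)}(a,b)$ and $h^{(0)}(a,b)$, and the implications in (1) and (2) follow from the numeralwise expressibility of the component predicates together with Lemma \ref{lem10.3}.

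The only point needing genuine care is the legitimacy of reading the number-theoretic reasoning of Section \ref{chap:9} as reasoning inside $S^{(0)}$: one must be sure the recursive relations remain numeralwise expressible when $S^{(0)}$ is viewed as a subsystem of set theory rather than as the primitive system of Section \ref{chap:2}. This is precisely where the identification of the meta level with the object level---flagged in subsection \ref{10.4}---is being invoked, and it is exactly the standing hypothesis that ``we can use the same set theory ZFC also on the meta level.'' Granting that hypothesis, the verification is routine and the theorem reduces verbatim to Theorem \ref{th10.2}.
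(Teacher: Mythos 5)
Your proposal takes essentially the same route as the paper, which offers no separate proof of this theorem at all: it simply states ``For these predicates we have shown the following in the former sections,'' i.e.\ it reduces the claim to Theorem \ref{th10.2} and the constructions of Section \ref{chap:9}, exactly as you do. Your additional care in checking that the number-theoretic machinery transfers to the subsystem $S^{(0)}$ of the set theory $S$ is more than the paper itself supplies, and is consistent with its standing hypothesis that ZFC is available on the meta level.
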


\begin{df}\label{df11.3}{\rm Let $q^{(0)}$ be the G\"odel number of the formula
$$
\forall b [\neg g^{(0)}(a,b)\vee \exists c(c\le b \hskip3pt\wedge\hskip2pt h^{(0)}(a,c))].
$$
Namely 
\beq
A_{q^{(0)}}(a)=\forall b [\neg g^{(0)}(a,b)\vee \exists c(c\le b \hskip3pt\wedge\hskip2pt h^{(0)}(a,c))].\nonumber
\ene
We then define Rosser formula in $S^{(0)}$ as follows.
\beqs
&&\hskip-20ptA_{q^{(0)}}(\qqqqo)=\forall b [\neg g^{(0)}(\qqqqo,b)\vee \exists c(c\le b \hskip3pt\wedge\hskip2pt h^{(0)}(\qqqqo,c))].
\enes}
\end{df}

Then the incompleteness theorem of Rosser type for $S^{(0)}$ is as follows.

\begin{lem}\label{lem11.4}{\sl If $S^{(0)}$ is consistent, both of $A_{q^{(0)}}(\qqqqo)$ and $\neg A_{q^{(0)}}(\qqqqo)$ are unprovable in $S^{(0)}$.}
\end{lem}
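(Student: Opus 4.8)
The plan is to transport the proof of Theorem~\ref{th7.9} essentially verbatim to the subsystem $S^{(0)}$, replacing the predicates $\GGG$, $\HHH$ and the numeralwise expressing formulae $g$, $h$ by their superscripted counterparts $\GGG^{(0)}$, $\HHH^{(0)}$, $g^{(0)}$, $h^{(0)}$, whose numeralwise expressibility is exactly what Theorem~\ref{th11.2} supplies. The whole argument is a double proof by contradiction, each half invoking only the consistency of $S^{(0)}$ together with the two clauses of numeralwise expressibility. Note that the Rosser formula $A_{q^{(0)}}(\qqqqo)$ of Definition~\ref{df11.3} is written in the form $\forall b[\neg g^{(0)}(a,b)\vee\exists c(c\le b\wedge h^{(0)}(a,c))]$, which is logically equivalent in $S^{(0)}$ to the implicational form $\forall b(g^{(0)}(a,b)\Rightarrow\exists c(c\le b\wedge h^{(0)}(a,c)))$ used in Theorem~\ref{th7.9}; I would use whichever form is convenient at each step, and compute $\neg A_{q^{(0)}}(\qqqqo)$ as $\exists b(g^{(0)}(\qqqqo,b)\wedge\forall c(c\le b\Rightarrow\neg h^{(0)}(\qqqqo,c)))$.

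For the unprovability of $A_{q^{(0)}}(\qqqqo)$, I would assume $\vdash A_{q^{(0)}}(\qqqqo)$ and let $e$ be the G\"odel number of a proof. Then $\GGG^{(0)}(q^{(0)},e)$ is true, so Theorem~\ref{th11.2} gives $\vdash g^{(0)}(\qqqqo,\eee)$. Consistency forbids $\vdash\neg A_{q^{(0)}}(\qqqqo)$, whence $\HHH^{(0)}(q^{(0)},d)$ is false for every $d$, in particular for $d=0,\dots,e$, so Theorem~\ref{th11.2} yields $\vdash\neg h^{(0)}(\qqqqo,\lceil d\rceil)$ for each such $d$. Collecting these into the bounded statement $\vdash\forall c(c\le\eee\Rightarrow\neg h^{(0)}(\qqqqo,c))$ and combining with $\vdash g^{(0)}(\qqqqo,\eee)$ produces $\vdash\exists b(g^{(0)}(\qqqqo,b)\wedge\forall c(c\le b\Rightarrow\neg h^{(0)}(\qqqqo,c)))$, i.e. $\vdash\neg A_{q^{(0)}}(\qqqqo)$, contradicting consistency.

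For the unrefutability, I would assume $\vdash\neg A_{q^{(0)}}(\qqqqo)$ and let $k$ be the G\"odel number of a proof, so $\HHH^{(0)}(q^{(0)},k)$ is true and $\vdash h^{(0)}(\qqqqo,\kkk)$, from which $\vdash\forall b(b\ge\kkk\Rightarrow\exists c(c\le b\wedge h^{(0)}(\qqqqo,c)))$ follows. Consistency now forbids any proof of $A_{q^{(0)}}(\qqqqo)$, so $\GGG^{(0)}(q^{(0)},d)$ fails for $d=0,\dots,k-1$, giving $\vdash\neg g^{(0)}(\qqqqo,\lceil d\rceil)$ and hence $\vdash\forall b(b<\kkk\Rightarrow\neg g^{(0)}(\qqqqo,b))$. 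Combining the two bounded statements yields $\vdash\forall b(\neg g^{(0)}(\qqqqo,b)\vee\exists c(c\le b\wedge h^{(0)}(\qqqqo,c)))$, i.e. $\vdash A_{q^{(0)}}(\qqqqo)$, again contradicting consistency.

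The step I expect to be the main, and indeed the only non-mechanical, obstacle is the passage from the finite list $\vdash\neg h^{(0)}(\qqqqo,\ooo),\dots,\vdash\neg h^{(0)}(\qqqqo,\eee)$ to the single bounded-quantifier theorem $\vdash\forall c(c\le\eee\Rightarrow\neg h^{(0)}(\qqqqo,c))$, and its dual for $g^{(0)}$ below $\kkk$. This requires the fact, provable in $S^{(0)}$, that every $c$ with $c\le\eee$ equals one of the numerals $\ooo,\dots,\eee$, a standard consequence of the number-theoretic axioms and the induction schema of $S^{(0)}$. Once this finite case-analysis lemma is in hand, the remainder is pure propositional and predicate-calculus bookkeeping identical to that in the proof of Theorem~\ref{th7.9}, the only genuine inputs being Theorem~\ref{th11.2} and the hypothesis that $S^{(0)}$ is consistent.
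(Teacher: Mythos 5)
Your proposal is correct and matches the paper's intent exactly: the paper's entire proof of Lemma \ref{lem11.4} is the single remark ``The proof is given in theorem \ref{th7.9},'' and your argument is precisely that proof of Theorem \ref{th7.9} transported to $S^{(0)}$ with the superscripted predicates and the numeralwise expressibility supplied by Theorem \ref{th11.2}. Your added observations --- the logical equivalence of the disjunctive and implicational forms of the Rosser formula, and the need for the provable finite case analysis below a numeral --- are accurate refinements of steps the paper leaves implicit.
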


The proof is given in theorem \ref{th7.9}.

\subsection{Extension of $S^{(0)}$}\label{11.2}

\normalsize

By lemma \ref{lem11.4}, if we let either of $A_{q^{(0)}}(\qqqqo)$ or $\neg A_{q^{(0)}}(\qqqqo)$ be $A_{(0)}$, and add $A_{(0)}$ to the axioms of $S^{(0)}$ as a new axiom to obtain a new system $S^{(1)}$, we have
\beq
S^{(1)} \mbox{ is consistent.}\label{consis}
\ene

We extend the G\"odel numbering for $S^{(0)}$ in theorem \ref{th11.2} to the system $S^{(1)}$, and extend definitions \ref{df11.1} and \ref{df11.3} to the system $S^{(1)}$ as follows.
\begin{enumerate}
\item[ 1)] The predicate $\GGG^{(1)}(a,b)$ means the following.

``A formula $A_a$ with G\"odel number $a$ has just one free variable $x$, and an expression $E_b$ with G\"odel number $b$ is a proof of the formula $A_a=A_a(\aaa)$ obtained from $A_a=A_a(x)$ by substituting $\aaa$ into $x$."

\item[ 2)] The predicate $\HHH^{(1)}(a,b)$ means the following.

``A formula $A_a$ with G\"odel number $a$ has just one free variable $x$, and an expression $E_b$ with G\"odel number $b$ is a proof of the formula $\neg A_a=\neg A_a(\aaa)$ obtained from $\neg A_a=\neg A_a(x)$ by substituting $\aaa$ into $x$."
\end{enumerate}

In the same way as before we can show that the predicates $\mbox{\GGG}^{(1)}(a,b)$ and $\mbox{\HHH}^{(1)}(a,c)$ are numeralwise expressed by the corresponding formulae $g^{(1)}(a,b)$ and $h^{(1)}(a,c)$ in $S$ respectively.

\begin{enumerate}
\item[ 3)]
Let $q^{(1)}$ be the G\"odel number of the formula
$$
\forall b [\neg g^{(1)}(a,b)\vee \exists c(c\le b \hskip3pt\wedge\hskip2pt h^{(1)}(a,c))].
$$
Namely
\beqs
&&\hskip-22ptA_{q^{(1)}}(a)=\forall b [\neg g^{(1)}(a,b)\vee \exists c(c\le b \hskip3pt\wedge\hskip2pt h^{(1)}(a,c))].\nonumber
\enes
Then
\beqs
&&\hskip-22ptA_{q^{(1)}}(\qqqqa)=\forall b [\neg g^{(1)}(\qqqqa,b)\vee \exists c(c\le b \hskip3pt\wedge\hskip2pt h^{(1)}(\qqqqa,c))].\nonumber
\enes
\end{enumerate}

Using the numeralwise expressibility of the predicates $\mbox{\GGG}^{(1)}(a,b)$ and $\mbox{\HHH}^{(1)}(a,c)$ and the consistency of $S^{(1)}$ in \eq{consis}, we can show in the same way as in the proof of lemma \ref{lem11.4}
\beq
&&\hskip-15pt\mbox{not } \vdash A_{q^{(1)}}(\qqqqa) \mbox {\ \ and \ }\mbox{not } \vdash \neg A_{q^{(1)}}(\qqqqa) \mbox{ in } S^{(1)}\label{pos1}.\nonumber
\ene

\MP

Then we let either of $A_{q^{(1)}}(\qqqqa)$ or $\neg A_{q^{(1)}}(\qqqqa)$ be $A_{(1)}$, and can add $A_{(1)}$ as a new axiom of $S^{(1)}$ to obtain a new system $S^{(2)}$. Then $S^{(2)}$ is consistent.
\MP

Proceeding in the same way, we have for any natural number $n(\ge 0)$
\beq
S^{(n)} \mbox{ is consistent}\label{consis2}
\ene
and
\beq
&&\hskip-20pt\mbox{not } \vdash A_{q^{(n)}}(\qqqqn)\mbox{\ \ and \ }\mbox{not } \vdash \neg A_{q^{(n)}}(\qqqqn) \mbox{ in } S^{(n)}.\nonumber\label{posn}
\ene

\subsection{Infinite extension of $S^{(0)}$}\label{11.3}

\normalsize

We denote by $S^{(\omega)}$ the system obtained by adding all of the following as new axioms to the system $S^{(0)}$.
$$
A_{(n)}=A_{q^{(n)}}(\qqqqn)\mbox{ \ \ or\ \ }\ \neg A_{q^{(n)}}(\qqqqn)\ (n\ge0)
$$
Then by \eq{consis2}, the system $S^{(\omega)}$ is consistent. Let ${\widehat q}(n)$ be the G\"odel number of the formula $A_{(n)}$. The formula $A_{(j)}$ is not provable in $S^{(i+1)}$ for $i<j$. Thus if $i<j$, the system $S^{(i)}$ is a proper subsystem of $S^{(j)}$, and ${\widehat q}(i)<{\widehat q}(j)$. Therefore for a given formula $A_r$ with G\"odel number $r$, we can decide\footnote{In fact by the monotonicity of the sequence ${\widehat q}(n)$ and its recursive definition, it is possible to decide whether $A_r$ is the axiom of the form $A_{(n)}$ in a recursive way. cf. e.g. \cite{C}, Chapter 5. However as we assume ZFC on the meta level, we can make this decision by the axioms of ZFC even if we cannot make this decision recursively.} whether $A_r$ is an axiom of the form $A_{(n)}$ by comparing the given formula $A_r$ with a finite number of axioms $A_{(n)}$ with $\widehat{q}(n)\le r$. From this fact we can define the following two predicates on the meta level of $S^{(\omega)}$, if we assume the same G\"odel numbering for the system $S^{(\omega)}$ as the one for the system $S^{(0)}$ in theorem \ref{11.2}.
\begin{enumerate}
\item[ 1)] The predicate $\GGG^{(\omega)}(a,b)$ means the following.

``A formula $A_a$ with G\"odel number $a$ has just one free variable $x$, and an expression $E_b$ with G\"odel number $b$ is a proof of the formula $A_a=A_a(\aaa)$ obtained from $A_a=A_a(x)$ by substituting $\aaa$ into $x$."
\item[ 2)] The predicate $\HHH^{(\omega)}(a,b)$ means the following.

``A formula $A_a$ with G\"odel number $a$ has just one free variable $x$, and an expression $E_b$ with G\"odel number $b$ is a proof of the formula $\neg A_a=\neg A_a(\aaa)$ obtained from $\neg A_a=\neg A_a(x)$ by substituting $\aaa$ into $x$."
\end{enumerate}

The predicates $\GGG^{(\omega)}(a,b)$ and $\HHH^{(\omega)}(a,b)$ are numeralwise expressed\footnote{Note that in the proof of theorem \ref{10.2} we do not use the recursiveness of the predicates $\GGG(a,b)$, $\HHH(a,b)$. Theorem \ref{10.2} thus yields that the set-theoretic predicates $\GGG^{(\omega)}(a,b)$, $\HHH^{(\omega)}(a,b)$ on the meta level are directly expressed by the formal formulae $g^{(\omega)}(a,b)$, $h^{(\omega)}(a,c)$ of system $S$ which is a formalization of set theory ZFC.} by corresponding  formulae $g^{(\omega)}(a,b)$ and $h^{(\omega)}(a,c)$ in $S$.

\begin{enumerate}
\item[ 3)]
Let $q^{(\omega)}$ be the G\"odel number of the formula
$$
\forall b [\neg g^{(\omega)}(a,b)\vee \exists c(c\le b \hskip3pt\wedge\hskip2pt h^{(\omega)}(a,c))].
$$
Namely
\beqs
&&\hskip-22ptA_{q^{(\omega)}}(a)=\forall b [\neg g^{(\omega)}(a,b)\vee \exists c(c\le b \hskip3pt\wedge\hskip2pt h^{(\omega)}(a,c))].\nonumber
\enes
Then
\beqs
&&\hskip-22ptA_{q^{(\omega)}}(\qqqqomega)=\forall b [\neg g^{(\omega)}(\qqqqomega,b)\vee \exists c(c\le b \hskip3pt\wedge\hskip2pt h^{(\omega)}(\qqqqomega,c))].\nonumber
\enes
\end{enumerate}

From these and the consistency of $S^{(\omega)}$, similarly to lemma \ref{lem11.4}, we obtain
\beq
&&\hskip-20pt\mbox{not } \vdash A_{q^{(\omega)}}(\qqqqomega) \mbox {\ \ and \ \  }\mbox{not } \vdash \neg A_{q^{(\omega)}}(\qqqqomega) \mbox{ in } S^{(\omega)}.\label{posomega}\nonumber
\ene

\subsection{Transfinite extension of $S^{(0)}$}\label{11.4}

\normalsize

Now we let
$$
A_{(\omega)}=A_{q^{(\omega)}}(\qqqqomega)\mbox{\ \ or\ \ }\neg A_{q^{(\omega)}}(\qqqqomega)
$$
and we add this as an axiom to the system $S^{(\omega)}$ to obtain a new system $S^{(\omega+1)}$. Then in a similar way as above we obtain
\begin{quotation}
If $S^{(0)}$ is consistent, then $S^{(\omega+1)}$ is consistent.
\end{quotation}

Repeating this procedure in a similar way transfinitely, we can construct\footnote{cf. former footnotes 11, 12.} a consistent formal system $S^{(\alpha)}$ for any ordinal number $\alpha$, which is an extension of $S^{(0)}$. Namely we have
\begin{quotation}
If $S^{(0)}$ is consistent, then $S^{(\alpha)}$ is consistent.
\end{quotation}

However if we can construct a formal system $S^{(\alpha)}$ for any ordinal $\alpha$, the number of the totality of axioms $A_{(\alpha)}$ added at each step will be greater than countable infinity\footnote{This is because we assume the axiom of choice. cf. section 8.3, lines 6-9 of page 199 and theorem 8.10 in section 8.2 of \cite{Kitada-book}.}. The number of the formulae of the system $S$ is at most countable as each formula consists of a finite number of primitive symbols. This is a contradiction. Thus the extension like this must stop at a countable\footnote{This is invariantly true even if the system $S$ has primitive symbols more than countable infinity. In this case certainly the number of formulae of $S$ can be greater than countable infinity. However the axioms $A_{(\alpha)}$ that can be added as axioms are undecidable propositions of the system $S^{(\alpha)}$, and for $A_{(\alpha)}$ to be such a proposition, $\alpha$ must be countable. Namely for G\"odel predicate $\GGG^{(\alpha)}(a,b)$ to be numeralwise expressible in $S$, it is necessary that $\alpha$ is at most countable.} ordinal $\beta_0$. Namely we have shown the following.

\begin{thm}\label{th11.5}{\sl There is a countable limit ordinal $\beta_0$ such that when $\alpha=\beta_0$, the system $S^{(\alpha)}$ has no undecidable proposition, and $S^{(\beta_0)}$ is complete. In other words, any extension of $S^{(\beta_0)}$ is inconsistent.}
\end{thm}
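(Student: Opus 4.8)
The plan is to run the extension of subsections \ref{11.2}--\ref{11.4} as a single transfinite recursion and then to bound its length by a cardinality argument. First I would make the recursion precise: given a consistent $S^{(\alpha)}$ for which the predicate $\GGG^{(\alpha)}(a,b)$ is numeralwise expressible (which, as noted in subsections \ref{11.3}--\ref{11.4}, holds for every countable $\alpha$), Lemma \ref{lem11.4} together with Theorem \ref{th7.9} produces a Rosser formula for $S^{(\alpha)}$ that is neither provable nor refutable in $S^{(\alpha)}$; setting $A_{(\alpha)}$ to be either this formula or its negation and adjoining it yields a consistent successor system $S^{(\alpha+1)}$. At a limit ordinal $\lambda$ I would set $S^{(\lambda)}=\bigcup_{\alpha<\lambda}S^{(\alpha)}$; this union is consistent because any derivation of a contradiction is a finite sequence of formulae and hence already lives in some $S^{(\alpha)}$ with $\alpha<\lambda$. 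Thus, so long as each stage still possesses an undecidable proposition, the construction manufactures a strictly increasing chain of consistent systems together with pairwise distinct new axioms $A_{(\alpha)}$.

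The key quantitative step is that this chain cannot run through all countable ordinals. Each $A_{(\alpha)}$ is undecidable, hence unprovable, in $S^{(\alpha)}$, so it is genuinely new and the assignment $\alpha\mapsto A_{(\alpha)}$ is injective. Every formula of $S$ is a finite string over the fixed countable stock of primitive symbols, so the set of all formulae of $S$ is countable. If the recursion proceeded at every stage $\alpha<\omega_1$, the map $\alpha\mapsto A_{(\alpha)}$ would embed the first uncountable ordinal $\omega_1$ into this countable set, which is impossible. Therefore there is a least ordinal $\beta_0$ at which the construction cannot proceed, i.e.\ at which $S^{(\beta_0)}$ possesses no undecidable proposition; and $\beta_0<\omega_1$, so $\beta_0$ is countable.

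Next I would identify ``the construction halts'' with the stated conclusions. By minimality, $S^{(\beta_0)}$ has no undecidable proposition, which is exactly the assertion that $S^{(\beta_0)}$ is complete. A complete consistent theory is maximal: if $B$ is any formula not already provable in $S^{(\beta_0)}$, then by completeness $\neg B$ is provable, so adjoining $B$ makes both $B$ and $\neg B$ provable and renders the theory inconsistent; hence every proper extension of $S^{(\beta_0)}$ is inconsistent. To see that $\beta_0$ is a limit ordinal, I would argue that the recursion never halts at a successor: if $\beta_0=\gamma+1$, then $S^{(\beta_0)}$ is obtained from the consistent system $S^{(\gamma)}$ by adjoining the single axiom $A_{(\gamma)}$, so $S^{(\beta_0)}$ is again a consistent system for which $\GGG^{(\beta_0)}$ is numeralwise expressible, and Theorem \ref{th7.9} then furnishes an undecidable Rosser formula in $S^{(\beta_0)}$, contradicting minimality. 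Hence $\beta_0$ is a countable limit ordinal.

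The main obstacle is the tension between the two halves of the argument, and pinning down precisely where the recursion is allowed to stop. The cardinality bound forces termination, yet the very machinery that drives the recursion --- numeralwise expressibility of $\GGG^{(\alpha)}$ and hence the Rosser construction of Theorem \ref{th7.9} --- appears to remain available at every countable stage, including countable limits, and by itself would again supply an undecidable proposition and let the recursion continue. The delicate point is therefore to justify what happens to $\GGG^{(\beta_0)}$ at the halting stage and to reconcile this with the conclusion that $S^{(\beta_0)}$ is complete; this is exactly the friction that the paper goes on to exploit in arguing that assuming ZFC on both the meta level and the object level leads to a contradiction.
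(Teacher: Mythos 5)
Your proof takes essentially the same route as the paper's: the countability of the set of formulae of $S$ forces the strictly increasing chain of new axioms $A_{(\alpha)}$ to terminate at some countable ordinal $\beta_0$, and the successor-stage argument (a consistent $S^{(\delta+1)}$ would again admit an undecidable Rosser sentence) shows that $\beta_0$ must be a limit. Your two additions --- the explicit check that a complete consistent theory admits no consistent proper extension, and the closing observation that the status of $\GGG^{(\beta_0)}$ at the halting stage is left unresolved --- are both compatible with the paper, the latter being precisely the tension it goes on to exploit in subsection \ref{11.5}.
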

\begin{proof} We have only to show that $\beta_0$ is a limit ordinal. In fact if $\beta_0=\delta+1$, $S^{(\beta_0)}$ is obtained by adding the axiom $A_{(\delta)}$ to $S^{(\delta)}$. In this case, by the same method mentioned above, the system $S^{(\beta_0)}=S^{(\delta+1)}$ can be extended with retaining consistency, contradicting the fact that the extension ends at $\beta_0$.
\end{proof}

\subsection{Church-Kleene ordinal}\label{11.5}

\normalsize

Feferman \cite{F} considers as an axiom $A_{(\alpha)}$ to be added to the system $S^{(\alpha)}$ a proposition Consis${}_{(\alpha)}$ meaning that ``$S^{(\alpha)}$ is consistent." This proposition is undecidable by G\"odel's second incompleteness theorem. He showed that the extension ends at a countable ordinal $\omega_1=\omega_1^{CK}$ called Church-Kleene ordinal. According to him
$$
\omega_1\\<\omega^{\omega^{\omega^2}}.
$$
In our context $\beta_0=\omega_1$, and by theorem \ref{th11.5}, the extension of our system $S^{(\alpha)}$ ends at $\alpha=\omega_1$. Namely if $S^{(0)}$ is consistent, $S^{(\omega_1)}$ cannot be extended further with retaining consistency, i.e. $S^{(\omega_1)}$ is complete.

The ordinal $\omega_1$ is a countable limit ordinal by theorem \ref{th11.5}. Therefore we can take a monotone increasing sequence $\{\alpha_n\}_{n=0}^\infty$ of countable ordinals such that $\alpha_n<\omega_1$ $(n=0,1,2,\dots)$ and
$$
\omega_1=\bigcup_{n=0}^\infty \alpha_n.
$$
The axioms $A_{(\gamma)}$ $(\gamma<\omega_1)$ of $S^{(\omega_1)}$ are the sum of the axioms $A_{(\gamma)}$ $(\gamma<\alpha_n)$ of $S^{(\alpha_n)}$. By the definition of ${\widehat q}(\gamma)$ for $\gamma<\alpha_n$, it is possible to decide whether a given formula $A_r$ is an axiom of $S^{(\alpha_n)}$ by seeing whether $A_{(\gamma)}=A_r$ for a finite number of $\gamma$ with ${\widehat q}(\gamma)\le r$. Therefore to see if a given formula $A_r$ is an axiom of $S^{(\omega_1)}$, it is sufficient to see if $A_{(\gamma)}=A_r$ for a finite number of $\gamma$ such that ${\widehat q}(\gamma)\le r$, $\gamma<\omega_1$. By
$$
\omega_1=\bigcup_{n=0}^\infty \alpha_n,
$$
we have
$$
{\widehat q}(\gamma)\le r\wedge\gamma<\omega_1\Leftrightarrow \exists n\ \left[{\widehat q}(\gamma)\le r\wedge\gamma<\alpha_n\right].
$$
Thus whether a given formula $A_r$ is an axiom of $S^{(\omega_1)}$ is decided by an induction on $n$.

We define as follows.
\begin{enumerate}
\item[ 1)] The predicate $\GGG^{(\omega_1)}(a,b)$ means the following.

``A formula $A_a$ with G\"odel number $a$ has just one free variable $x$, and an expression $E_b$ with G\"odel number $b$ is a proof of the formula $A_a=A_a(\aaa)$ obtained from $A_a=A_a(x)$ by substituting $\aaa$ into $x$."
\item[ 2)] The predicate $\HHH^{(\omega_1)}(a,b)$ means the following.

``A formula $A_a$ with G\"odel number $a$ has just one free variable $x$, and an expression $E_b$ with G\"odel number $b$ is a proof of the formula $\neg A_a=\neg A_a(\aaa)$ obtained from $\neg A_a=\neg A_a(x)$ by substituting $\aaa$ into $x$."
\end{enumerate}
Then these predicates are numeralwise expressed by the corresponding formulae $g^{(\omega_1)}(a,b)$, $h^{(\omega_1)}(a,c)$ in $S$. Thus the G\"odel number $q^{(\omega_1)}$ of the formula
\beqs
&&\hskip-22ptA_{q^{(\omega_1)}}(a)=\forall b [\neg g^{(\omega_1)}(a,b)\vee \exists c(c\le b \hskip3pt\wedge\hskip2pt h^{(\omega_1)}(a,c))]
\enes
in $S^{(\omega_1)}$ is defined. Therefore the system $S^{(\omega_1)}$ has an undecidable proposition
$$
A_{q^{(\omega_1)}}(\qqqqomegaa)),
$$
and the incomleteness theorem holds for the system $S^{(\omega_1)}$. Hence $S^{(\omega_1)}$ is incomplete. This contradicts the following consequence of theorem \ref{th11.5}.
\begin{quotation}
\F
``The extension of the system $S^{(\alpha)}$ ends at $\alpha=\omega_1$, and if $S^{(0)}$ is consistent, it is impossible to extend $S^{(\omega_1)}$ further with retaining consistency. Namely if $S^{(0)}$ is consistent, then $S^{(\omega_1)}$ is complete."
\end{quotation}


The argument in this subsection is valid even if we do not introduce Church-Kleene ordinal. We can make the same argument for the countable ordinal $\beta_0$ in theorem \ref{th11.5}. Therefore the contradiction stated in this subsection already follows from theorem \ref{th11.5}, and what was stated in this section holds also for ZF.

In this way we meet a contradiction if we assume that set theory holds on the meta level and discuss the set theory as an object theory.

In Hilbert's formalism it is only permitted to do finite procedures on the meta level, and one tries to treat the object theory which deals with the infinities. If we take this standpoint, the contradiction mentioned above does not seem to appear. However, the consistency of the object world implies its incompleteness. Further the consistency itself is not decidable.

However if we reflexively assume that the object world takes also finitary standpoint, then it seems that there may appear no contradiction, and the object world would be complete. In other words, if we do not assume the axiom of infinity in the object set theory, the meta leveled world and the object leveled world are symmetric or reflexive, and on this setting, the object world will be consistent and complete. The cause that a contradiction appeared in the above argument is that we assumed the axiom of infinity on the both levels of the object world and the meta world. Namely the cause is that we assumed that the actual infinity exists in both of the object and meta worlds. If we take the standpoint that mathematical existence is only computable things and that the infinity is not an actual one but is a fictitious existence which is an auxiliary tool for the inquiry of the computability, Hilbert's thesis that consistency and completeness are the certification of the soundness of mathematics will revive.


In the next section we will see the deeper problem hidden behind these.

\section{Self-reference and inconsistency}\label{chap:12}

As we have seen in section \ref{chap:11}, if we apply to the number theory $S^{(0)}$ as a subsystem of set theory $S$ the argument of incompleteness theorem repeatedly and continue to add undecidable propositions to the axioms of $S^{(0)}$, we finally arrive at a countable ordinal $\beta_0$ and the corresponding extended system $S^{(\beta_0)}$ must be a complete system and has no undecidable propositions. However we have seen that it is possible to construct an undecidable Rosser formula in this system $S^{(\beta_0)}$ and the argument of incompleteness theorem is also valid. This is a contradiction.

\subsection{Cause of inconsistency}\label{12.1}

We stated that the cause of the contradiction is that we have assumed that one can use the set theory on the meta level to the same extent as the set theory on the object level. If, returning to the original point of formalism, we assume that only the finitary method is used on the meta level, the extension of $S^{(0)}$ is possible merely by the usual mathematical induction. Hence the extension of $S^{(0)}$ is allowed at most to $S^{(\omega)}$ which is incomplete, and further extension is not allowed by the finite ability of meta level\footnote{In mathematics which stands on the finitary method, we cannot think of an infinite set like $\omega=\{0,1,2,\dots\}$ as a set.}. Therefore on this standpoint there will be no contradiction mentioned above.

Further as stated at the end of subsection \ref{11.5}, if we restrict the mathematics on the object level to the finite mathematics that the intuitionism admits, we can preserve the reflexivity of the meta level and the object level with retaining the consistency of both levels.

What we have stated above is a solution when we assume that there is no problem in the argument of the proof of G\"odel's incompleteness theorem. As mentioned already at several places, the proof of incompleteness theorem is possible by allowing some self-reference or `confusion' of identifying the numbers on the meta level with the numbers on the object level by substitutions. Namely as stated at the beginning of section \ref{chap:7}, we have defined the operation of substitution of the natural number $n$ on the meta level into a formula $F(x)$ with letting $x=\nnn$ by
\begin{eqnarray}
F({\nnn})\stackrel{\scriptsize\mbox{{\it def}}}=\forall x \left(x=n\Rightarrow F\right).\label{dainyuu2}
\end{eqnarray}
We here make an apparent confusion of identifying the meta level with the object level. This is an implicit assumption that corresponds to the Russell's axiom of reducibility in the sense that we have identified the natural number $n$ on the meta level with the numeral $\nnn$ on the object level. In the number theory considered in G\"odel's original paper, the axiom of reducibility is assumed as it uses the system based on Principia Mathematica of Whitehead and Russell. However the `implicit assumption' that was referred to above as corresponding to the axiom of reducibility is not this one that is explicitly referred to as the axiom of reducibility in G\"odel's original paper. We are speaking of the `implicit assumption' that for a given natural number $x_j$ on the meta level, one constructs a numeral $Z(x_j)$ on the object level and substitutes it into the variable $u_j$ of the formula $r(u_1,\dots,u_n)$ on the object level. Namely we are pointing out the confusion made in the operation of forming a numeral $Z(x_j)$ on the object level from a natural number $x_j$ on the meta level.

\subsection{Self-reference and inconsistency}\label{12.2}

\normalsize

As stated in subsection \ref{10.4}, the second incompleteness theorem is thought usually as claiming the impossibility of Hilbert's formalism that mathematics is sound if one can show the consistency of mathematics by writing the classical mathematics which treats the infinity as a formal axiomatic system. If the incompleteness theorem is shown completely in a syntactic manner, it would be true to assert so. However as we have seen, already in the proof of the first incompleteness theorem, one has made a `semantic interpretation' when replacing the natural number $n$ on the meta level by the numeral $\nnn$ on the object level. This is the self-reference that was mentioned in section \ref{chap:1}. And the reflexivity or the symmetricity between the meta level and the object level means this self-reference or the confusion between the two levels.

Therefore the true cause of the contradiction is said to be in allowing the self-reference. However as we have considered in section \ref{chap:1}, speaking is nothing but a self-reference itself for humans.

Then without quitting being humans, how can we avoid the contradiction?

As stated at the end of subsection 7.5, if we allow self-reference, the only way to avoid the contradiction is to speak of finite things only.

Probably for living beings which have no language or its `symbolization,' there would be no problem which we meet. Self-reference is thought to be a problem proper to humans who have language or its symbolization. In humans who have acquired the ability to control things actually in concrete manner, the obtainment of language gave them the power to work on the things and nature. There always associate byproducts with all things. The language gave humans ability and power. However at the same time it gave them the possibility to come in the infinite cycles of self-reference.

\subsection{Restriction of self-reference}\label{12.3}

\normalsize

As seen in section \ref{chap:2}, the paradoxes like Russell's one which looks coming from the self-reference produced the formalism that proposed that if one writes down the mathematics in a formal system of symbols in finitary method, the contradiction would disappear. The avoidance of Russell's paradox was done in this direction by a formal axiomatic set theory. This approach also excluded the Burali-Forti's paradox of the set of all ordinals and Cantor's paradox of the set of all sets by regarding them as not-sets or proper classes\footnote{see for details chapters 7, 8 of \cite{Kitada-book}.}.

There are many problems which arise by self-reference. For example, there is the problem of `impredicative definition.' This refers to the situation that a set $M$ and an object $m$ are defined as follows. Namely on the one hand, $m$ is an element of the set $M$, and on the other hand, the definition of $m$ depends on $M$. Similarly the terminology is used in the case when for a property $P$, an object $m$ whose definition depends on $P$ satisfies the property $P$. In the latter words, the set $M$ above is the set of all elements which satisfy the property $P$. Apparently these situations are `cyclic.' Poincar\'e (1905-6)  asserted that the cause of the paradox like this is the vicious circle of discussions, and Russell made the same opinion as his vicious circle principle (1906) which claims to prohibit such cyclic definitions. This principle can exclude Russell's paradox, the paradox of all sets, etc. However how about the following concrete example of analysis?

The definition of supremum $\sup M$ of a subset $M$ of real numbers is as follows in the Dedekind's construction of real numbers by his notion `cut.' Let $\R$ be the totality of real numbers and let $\Q$ be the totality of rational numbers. An element $\alpha$ of $\R$ is defined as a set of rational numbers which satisfies the following three properties.
\begin{enumerate}
\item $\alpha\ne \emptyset$,$\quad$ $\alpha^c:=\Q-\alpha=\{s|s\in\Q\ \wedge\ s\notin\alpha\}\ne\emptyset.$
\item $r\in\alpha,s<r,s\in\Q\Rightarrow s\in \alpha$.
\item $\alpha$ has no maximum element.
\end{enumerate}
Given a set $M$ of real numbers, the supremum $\sup M$ of $M$ is defined as the sum set of $M$:
$$
\bigcup M=\bigcup_{\alpha\in M} \alpha.
$$
In general the set $M$ is a set of all elements $m$ of $\R$ which satisfies the given properties. In the above case, in the sense that the definition of $\sup M=\cup M\in \R$ starts from $\R$ and then defines an element $\sup M$ of $\R$, the definition is an impredicative definition.

One might think he seems to be able to refute the above criticism as follows. The above procedure just describes a process of choosing an element $\sup M$ from the set $\R$, but does not create the element $\sup M$ itself by the definition. However when writing the class of all sets by $C$, one can then say that the set $\{x\ |\ x\in C,\ x\not\in x\}$ is just choosing elements $x\in C$ such that $x\not\in x$.
Thus if the definition of $\sup M$ is allowed, then the Russell's set must be allowed to exist.

Those considerations would tell that just the exclusion of cyclic arguments might exclude other necessary and useful things, even if it can exclude paradoxes.

On the basis of such backgrounds, it has been understood that it is useful for the purpose of avoiding contradictions to prescribe sets by defining set theory as a formal axiomatic system. At this point it could be said that the usefulness of the concept of formal system has become to be understood. This is the fact that makes us realize again that the axiomatic description that goes back to Euclid is still effective and useful in the present age.

The Tarski's theorem stated in subsection \ref{10.4}:
\begin{quote}
The set of true sentences of a language ${\cal L}$ is not referred to by a sentence inside the language ${\cal L}$. Namely the predicate $T$ showing the truthness must not be inside the language ${\cal L}$.
\end{quote}
tells the similar thing. Namely summarizing, we have
\begin{quote}
The truthness of things is not the one which can be referred to directly to itself. It is recognized by systematic descriptions through axiomatic  formulations.
\end{quote}

\subsection{System as a restriction of self-reference}\label{12.4}

\normalsize

As we have seen, the cause of paradoxes or contradictions is self-reference, which goes back to the language and its symbolization that humans possess. We also have seen that it is equivalent to abandon being humans to remove this cause, however. In this situation we have seen that the way which we can adopt is the moderate course that, not by trying to remove the problems by concentrating on a local point like impredicative definition but by standing on a global viewpoint to the problems, we try to avoid contradictions by rewriting whole things as a formal system.

If we write down these with limiting to mathematics, it will be as follows.


\begin{thm}\label{th12.1}{\sl 
\begin{enumerate}
\item[ 1)]
If we allow complete self-reference, there will be a contradiction if we do not restrict the objects of thought to finite numbers.
\item[ 2)]
If it is possible to restrict self-reference completely, there will be no contradictions even if infinity exists. However in this case, we could not speak of ourselves as well as of the allowed infinity. Because speaking necessarily accompanies self-reference. Nevertheless it would be possible to avoid the current contradictions by making appropriate restrictions to the deed of speaking according to the order of necessity.
\end{enumerate}}
\end{thm}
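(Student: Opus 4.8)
The plan is to assemble this two-part statement from the rigorous contradiction already established in Theorem \ref{th11.5}, read through the diagnostic discussion of its cause in subsections \ref{12.1}--\ref{12.3}. The statement is a synthesis rather than a fresh computation, so the work of the proof is to fix precisely which hypothesis of the earlier transfinite construction plays the role of ``complete self-reference'' and which plays the role of ``infinity,'' and then to quote the relevant outcomes.

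For part 1) I would first recall that complete self-reference is realized concretely by the substitution convention \eq{dainyuu2}, which identifies the meta-level natural number $n$ with the object-level numeral $\nnn$ and thereby licenses the entire G\"odel numbering together with the diagonal predicates $\GGG$, $\HHH$. Granting this identification and admitting the axiom of infinity on both levels (that is, ZFC on the meta level), the transfinite extension of subsections \ref{11.3}--\ref{11.4} proceeds and Theorem \ref{th11.5} delivers a countable limit ordinal $\beta_0$ at which $S^{(\beta_0)}$ is complete; yet subsection \ref{11.5} simultaneously constructs an undecidable Rosser formula $A_{q^{(\beta_0)}}(\lceil q^{(\beta_0)}\rceil)$ in $S^{(\beta_0)}$, which is the contradiction. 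The escape clause is then obtained by restricting the objects of thought to finite numbers: dropping the axiom of infinity from the meta level, as in Hilbert's finitary standpoint, confines the extension to ordinary induction, so it halts at the incomplete $S^{(\omega)}$ and Theorem \ref{th11.5} is never reached. This is exactly the dichotomy asserted in 1).

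For part 2) I would argue by the contrapositive of the same mechanism. Completely restricting self-reference means forbidding the identification \eq{dainyuu2}, i.e. keeping the meta-level numbers strictly disjoint from the object-level numerals. Without that bridge one cannot pass from a meta-level number $x_j$ to an object-level numeral to be substituted into $r(u_1,\dots,u_n)$, so the predicates $\GGG$, $\HHH$ cannot be formed and neither the first incompleteness theorem nor the transfinite construction can even be initiated; hence no contradiction arises, whether or not infinity is admitted. The cost is read off from section \ref{chap:1}: since any act of speaking about the system is itself a self-reference, a total prohibition of self-reference leaves one unable to speak of oneself or of the admitted infinity. The concluding affirmative sentence I would support by appeal to the Tarski-type conclusion of subsection \ref{12.3}---a truth predicate may not live inside its own language, but truth can nonetheless be approached through a stratified axiomatic description---so the current contradictions can still be avoided by restricting the deed of speaking ``according to the order of necessity,'' that is, by separating the levels rather than abolishing reference outright.

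The hard part will be conceptual rather than computational: making the informal phrases ``complete self-reference'' and ``restrict self-reference completely'' correspond unambiguously to the single formal hypothesis (presence or absence of the identification \eq{dainyuu2}), together with the axiom of infinity. All the genuinely mathematical content is already carried by Theorem \ref{th11.5}; the remaining steps are interpretive, and the chief risk is to state as proved what is in truth a philosophical reading of the formal contradiction. I would therefore frame 1) as a rigorous consequence and 2) as its interpretive converse, flagging the latter's informal status explicitly.
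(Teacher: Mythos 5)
Your reconstruction matches the paper's own (implicit) justification: the paper states Theorem \ref{th12.1} with no proof environment at all, offering it as a summary of the contradiction established in Theorem \ref{th11.5} and subsection \ref{11.5} together with the diagnosis in subsections \ref{12.1}--\ref{12.3}, which is exactly the material you assemble and in the same order. Your explicit flagging of part 2) as interpretive rather than rigorously proved is appropriate and, if anything, more careful than the paper itself.
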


In fact the axiomatic set theory always is at the risk of meeting contradictions. But at the point when a contradiction is discovered, it would be possible to retrieve the current consistency by making new restrictions to the system of set theory.

\subsection{Conclusion}\label{12.5}

\normalsize

We have been stating the important points related to the G\"odel's incompleteness theorem. The lesson we have learned would be that there is no end point for everything. When we think all the problems have been solved, a new problem has already arisen. Humans might be destined to meet the `next' always. This `next' problem is often caused by humans themselves, and this would suggest that the `next' problem is produced by `self-reference.' The problem of self-reference will continue forever in this way.

Not only science but also whatever humans do is the deeds and statements that arise from his or her `Working Hypotheses.' All is thought to be the attempt to verify hypotheses.

G\"odel's result may be taken as a negative answer to the working hypothesis of Leibniz: ``by writing down mathematics in terms of symbols, we can generate all mathematical theorems by a machine." However as we have seen there was made an implicit assumption here. We have stated that this assumption to identify the meta level and the object level is the characteristic that humans possess. We have referred to the fact that humans learned in the 20th century the method to deal with such problems based on a global viewpoint of using the systemic method of axiomatic formal set theory, not by coping with the problems by taking a local viewpoint of removing cyclic arguments such as impredicative definitions. 

Probably the method mentioned above will be replaced by others with regarding it as a former `working hypothesis' or will be restated in other forms in the future. All is an infinite series of verifications and rebuilding hypotheses.

If only the machines that humans produced remain in the future and if the truth is only what those machines tell and there is no human who checks it, there will be no existence which tells things like G\"odel's theorem. In such a world, there will then be no problems and those machines will produce `all mathematical theorems' at peace.

The learning that humans got in the 20th century to write down mathematics in axiomatic system without referring to the truthness directly might suggest the direction that humans would entrust themselves to such a `system' in the future.

Things like these are the problems that the younger generations are meeting actually in the present age, and those people will see the answers in the future. The author hopes that what has been stated in the present article would be able to tell those younger generations the course that humans traced and would be helpful as any suggestions to their future.




\end{document}